\documentclass[reqno,10pt]{amsart}

\usepackage[sort]{cite}
\usepackage{amsfonts} 
\usepackage{amsmath} 
\usepackage{amssymb} 
\usepackage{amsthm} 
\usepackage{latexsym} 
\usepackage{mathrsfs} 
\usepackage{mathtools} 
\usepackage{slashed}  

\usepackage{verbatim}
\usepackage{cases}
\usepackage[dvips]{epsfig}
\usepackage{epsf} 
\usepackage[bookmarksnumbered,pdfpagelabels=true,plainpages=false,colorlinks=true,
            linkcolor=black,citecolor=black,urlcolor=black]{hyperref}
\usepackage{url}
\usepackage{color}
\usepackage{xcolor}
\usepackage{graphicx}
\usepackage{enumitem} 

\usepackage{pifont} 
\usepackage{upgreek} 
\usepackage{fancyhdr} 
\usepackage{calligra} 
\usepackage{marvosym} 
\usepackage[percent]{overpic} 
\usepackage{pict2e} 
\usepackage[hypcap=false]{caption} 
\usepackage{wasysym} 
\newcommand{\la}{\langle}
\newcommand{\ra}{\rangle}

\newtheorem{theorem}{Theorem}[section]

\newtheorem{lemma}[theorem]{Lemma}
\newtheorem{proposition}[theorem]{Proposition}
\newtheorem{corollary}[theorem]{Corollary}

\theoremstyle{definition}
\newtheorem{remark}[theorem]{Remark}

\renewcommand{\tilde}{\widetilde}
\renewcommand{\bar}{\overline}
\numberwithin{equation}{section}
\allowdisplaybreaks

\newcommand{\cA}{\mathcal{A}}

\newcommand{\cL}{\mathcal{L}}

\newcommand{\cQ}{\mathcal{Q}}
\newcommand{\cR}{\mathcal{R}}

\newcommand{\cV}{\mathcal{V}}

\newcommand{\cK}{\mathcal{K}}

\newcommand{\bE}{\mathbb{E}}

\newcommand{\bN}{\mathbb{N}}

\newcommand{\bR}{\mathbb{R}}

\newcommand{\bX}{\mathbb{X}}

\newcommand{\sA}{\mathsf{A}}

\newcommand{\sF}{\mathsf{F}}
\newcommand{\sM}{\mathsf{M}}

\newcommand{\sN}{\mathsf{N}}

\newcommand{\sG}{\mathsf{G}}

\newcommand{\sS}{\mathsf{S}}

\newcommand{\sd}{\mathsf{d}}

\newcommand{\pa}{\partial}

\newcommand{\zbE}{\prescript{}{0}{\mathbb{E}}}

\newcommand{\Lis}{\mathcal{L}{is}}
\newcommand{\PH}{{\mathbb{P}_H}}
\newcommand{\Ph}{{\mathbb{P}_{h}}}
\newcommand{\dv}{{\rm div}}
\newcommand{\Ric}{{\rm Ric\,}}
\newcommand{\gd}{{\rm grad}}
\newcommand{\wgd}{{\rm grad}_\Sigma}     
\newcommand{\wdv}{{\rm div}_\Sigma}        
\newcommand{\wRic}{{{\rm Ric}}}      
\newcommand{\wDel}{\Delta_\Sigma}          
\newcommand{\p}{p}   
\newcommand{\avint}{\mathop{\,\rlap{--}\!\!\int}\nolimits}

\begin{document}

\title{The primitive equations on curved surfaces}

\author[M. Hieber]{Matthias Hieber}
\address{Department of Mathematics\\
        TU Darmstadt\\
        Schlossgartenstr. 7, 64289 Darmstadt\\
        Germany}
\email{hieber@mathematik.tu-darmstadt.de}

\author[Y. Shao]{Yuanzhen Shao}
\address{The University of Alabama\\
	Tuscaloosa, Alabama \\
	USA}
\email{yshao8@ua.edu}
\author[G. Simonett]{Gieri Simonett}
\address{Department of Mathematics\\
        Vanderbilt University\\
        Nashville, Tennessee\\
        USA}
\email{gieri.simonett@vanderbilt.edu}

\author[M. Wilke]{Mathias Wilke}
\address{Institut f\"ur Mathematik\\
        Martin-Luther-Universit\"at Halle-Wittenberg\\
        Halle (Saale)\\
        Germany}
\email{mathias.wilke@mathematik.uni-halle.de}

\thanks{This work was supported by a grant from the Simons Foundation (\#426729 and \#853237, Gieri Simonett).
  Matthias Hieber acknowledges the support by the German Science Foundation (DFG) through the Research Unit FOR5528.
Yuanzhen Shao gratefully acknowledges the support from the National Science Foundation (NSF)   grants DMS-2306991 and DMS-2512104. }

\subjclass[2020]{ Primary: 35Q35,  35Q86, 76D03, 76D05.  Secondary: 86A10.}




\keywords{Global smooth solutions, hydrostatic  Helmholtz projection, Riemannian manifold, connection Laplacian, Ricci curvature.}

\begin{abstract}

\bigskip\noindent
In this paper, we study the primitive equations in a collar neighborhood of a smooth closed hypersurface in $\bR^3$.
Using the framework of maximal $L_q$-regularity and the hydrostatic Helmholtz projection, we establish existence, uniqueness, and regularity results for strong solutions. Building on these local well-posedness results, we derive suitable a priori estimates and prove the global existence of strong solutions without imposing any smallness assumptions for the initial data. Moreover, we show that the solutions are $C^\infty$ jointly in time and space.
\end{abstract}

\maketitle

\section{Introduction}\label{S:Intro}
Geophysical fluid dynamics concerns the motion of fluids on planetary scales, most notably in the Earth's atmosphere and oceans. Unlike classical fluid motion encountered in laboratory settings, geophysical flows evolve under the combined influence of planetary rotation, gravity, stratification, and the geometry of the Earth itself, see e.g., \cite{Ped87}.

One of the defining features of  geophysical flow is the disparity between horizontal and vertical length scales. Atmospheric and oceanic motions typically extend thousands of kilometers horizontally while remaining comparatively shallow in the vertical direction. As a consequence, vertical velocities are generally much smaller than horizontal velocities, and the resulting dynamics exhibit strong directional asymmetry.

These physical considerations lead to the \emph{primitive equations,} which form the fundamental mathematical model for large-scale atmosphere and ocean dynamics. The primitive equations arise from the Navier--Stokes equations through asymptotic approximations appropriate for geophysical regimes, most importantly the \emph{hydrostatic approximation.} This approximation reflects the near balance between vertical pressure gradients and gravitational forces and is remarkably accurate for synoptic and planetary scales.

The primitive equations play a central role in both applied and theoretical research. From the physical perspective, they constitute the foundation of modern weather prediction and climate modeling. Virtually all large-scale atmospheric and oceanic simulation frameworks are based, either directly or indirectly, on variants of these equations. From the mathematical point of view, the primitive equations provide a distinguished class of nonlinear partial differential equations.

An important aspect of geophysical flow is that it takes place on a curved
surface. Since the Earth is approximately spherical, the geometry of the underlying domain cannot be neglected in large-scale models. Curvature influences both the structure of the equations and the qualitative behavior of solutions.

Consequently, the study of primitive equations on sphere-like surfaces is not merely a mathematical generalization but is intrinsically connected to the physical setting in which these flows occur.

The analysis of the primitive equations on curved geometries presents substantial mathematical challenges. The interplay between nonlinear transport, anisotropic structure, and geometric effects requires tools from differential geometry, functional analysis, and the theory of partial differential equations. Moreover, questions concerning existence, uniqueness, regularity, long-time behavior, and stability of solutions remain active areas of investigation.

The purpose of this paper is to study the primitive equations in the setting of a sphere-like surface.
However, we mention that our results hold for any closed smooth hypersurface $\Sigma\subset \bR^3$.

 Our focus is on the mathematical structure induced by the geometry and on the analytical properties of the resulting system. By incorporating the effects of curvature into the primitive equation framework, one obtains a model that more faithfully reflects the large-scale dynamics of geophysical flow while simultaneously raising challenging and interesting mathematical problems.

\medskip
In this paper, we  study  the following system of  primitive equations
\begin{equation}
\label{PE}
\left\{\begin{aligned}
  \partial_t v +   \nabla_v v     + w \partial_r  v -\big( \Delta_\sN      + {\rm Ric}     \big) v    + \gd_\Sigma  \pi_s    &=0    &&\text{in}&&\sN ,\\
\wdv \overline{v} & = 0 &&\text{in}&&\sN,\\
w(\cdot , r) - \int_r^0 \dv_\Sigma v (\cdot,\xi)\, d\xi & = 0 &&\text{in}&&\sN, \\
\partial_r v ,\ w&= 0 &&\text{in}&&\Sigma_u, \\
v ,\  w&= 0 &&\text{in}&&\Sigma_b, \\
v(0) & = v_0 &&\text{in}&&\sN ,
\end{aligned}\right.
\end{equation}
where $(\sN, g_{\sN})$ is the product manifold
$\sN=\Sigma \times [-h,0]$, with $(\Sigma, g)$ being a smooth, closed and connected hypersurface in $\bR^3$.  Moreover,
$g_\sN=g \oplus g_{\bR}$  is the product metric, with $g_{\bR}$ the Euclidean metric on $[-h,0]$.
We observe that the tangent space $T_{(\p,r)}\sN$ splits as
$$
T_{(\p ,r)} \sN = T_p \Sigma \oplus T_r \bR \cong T_p \Sigma \times \bR, \quad (\p, r)\in \Sigma\times [-h,0].
$$
In the equations above,
$v\in C(\sN; T\Sigma):= \{f\in C(\sN; T\sN): f(\p,r) \in  T_{\p}  \Sigma \} $ and $w\in C(\sN)$
stand for the horizontal and vertical components of the fluid velocity, respectively;
and $\pi_s\in C(\Sigma)$ is the surface pressure.
Moreover, $\bar v(\cdot)= \frac{1}{h} \int_{-h}^0 v(\cdot, r)\,dr$ is the vertical average of $v$.

\medskip\noindent
The quantities
$\wgd$, $\nabla=\nabla^\Sigma$ and $\wdv$ denote the gradient,  the Levi-Civita connection, and the divergence operator of $(\Sigma,g)$, respectively.

In addition, $\Delta_\sN =\Delta_\Sigma + \partial_r^2$ is the connection Laplacian on  $(\sN,g_{\sN})$ with $\Delta_\Sigma$ being its counterpart on $(\Sigma, g)$, and   $\Ric$ denotes the Ricci curvature tensor on $(\sN,g_{\sN})$.
We remark that $\Ric (\cdot , r)= {\rm Ric}_\Sigma (\cdot )$, where ${\rm Ric}_\Sigma$ is  the Ricci tensor on $(\Sigma, g)$.
 It is well-known that ${\rm Ric}_\Sigma$ coincides with  $\kappa g$, where $\kappa$ is the Gaussian curvature of $\Sigma$.
However, we prefer the notation ${\rm Ric}_\Sigma$   in view of the properties of the Ricci tensor used in Section~\ref{S: hydrostatic Stokes operator}.
Finally, the expressions $\Sigma_b:= \Sigma\times \{-h\} $ and $\Sigma_u:= \Sigma\times \{0\}$ denote the ``bottom and upper" boundary of $\sN$, respectively.

\smallskip
We note that the condition $\dv_\Sigma \bar v =0$ in~$\eqref{PE}_2$ is a direct consequence of $\eqref{PE}_3$ and the boundary condition $w(\cdot, 0)=0$.
More general boundary conditions for $v$ may also be treated using the techniques developed in this paper. For instance, one may consider
$$
v=0 \quad \text{on } \Sigma_D,\qquad \partial_r v=0 \quad \text{on } \Sigma_N,
$$
where $\Sigma_D \subset \{\Sigma_u,\Sigma_b\}$ and $\Sigma_N= \{\Sigma_u,\Sigma_b\}\setminus \Sigma_D.$
However, in order to streamline the presentation and avoid unnecessary technicalities, we restrict our attention to the boundary conditions in \eqref{PE}.

\smallskip
The set of equations~\eqref {PE} is derived and justified in Appendix~\ref{S: Appendix B},
where we consider a  fluid occupying a collar neighborhood $S$,
$$
S:=
\left\{x \in \mathbb{R}^3 :x = \p + r \nu_\Sigma(p),
\quad\p \in \Sigma,\quad -h < r < 0\right\},
$$
of a smooth and closed hypersurface $\Sigma$,
where $h>0$ is sufficiently small and $\nu_\Sigma$ denotes the outward-pointing unit normal vector field along $\Sigma$.

We assume that the motion of the fluid is governed by the incompressible Navier--Stokes equations. After introducing canonical normal coordinates associated with the collar neighborhood, we rewrite the Navier--Stokes equations with respect to the induced metric, which we refer to as the \emph{collar metric} of $S$.
More precisely, if $u:S \to \mathbb{R}^3$ denotes the fluid velocity field, we employ the decomposition
$$
u (\p,r) = v(\p,r) + w  (\p,r)\nu_\Sigma(\p,r),\quad   (v(\p,r), w  (\p,r)) \in T_{\p}\Sigma \times  T_r\bR =T_{\p}\Sigma \times \bR.
$$
Hence, $v$ represents the tangential and $w$ the normal velocity in $S$, respectively.
We then express the Navier-Stokes equations with respect to the collar metric, using the variables $(v,w)$.
In the course of this reformulation, we perform approximations that exploit the smallness of the parameter $h$.

Up to this stage, the analysis applies to an arbitrary closed  hypersurface $\Sigma$.
Finally, under the additional assumption that $\Sigma$ is $C^3$-close to a sphere of radius $a$,
we derive a further approximation  that takes into account the smallness of the ratio $\frac{h}{a}. $
The resulting equations for $(v,w)$ are then expressed with respect to the \emph{product metric} $g_\sN$.
This leads to the primitive equations stated in~\eqref{PE}.

\medskip
We note that, at least formally, setting $h=0$ reduces $\sN$ to $\Sigma$. In this case, the vertical velocity component $w$ disappears, and the resulting system \eqref{PE} coincides precisely with the \emph{surface Navier--Stokes equations} studied, for instance, in~\cite{PSW21, SaTu20, SSW25, SiWi22}. This observation highlights a close connection between the present framework and the theory of incompressible flows on curved surfaces. It would be interesting to investigate this relationship further and to explore to what extent the longtime behavior of solutions to the surface Navier-Stokes equations
carries over to the primitive equations considered here.

\medskip\noindent
We now state the main result of the manuscript.
\begin{theorem} [Global existence of smooth solutions]
\label{thm: main-result}
Suppose $p,q\in (1,\infty )$ and $\frac{1}{p}+ \frac{1}{q}\le 1$.  Then for each initial value
$v_0 \in B^{2/q}_{qp,\bar\sigma} (\sN; T\Sigma)$ with $v_0=0$ on $\Sigma_b$, the primitive equations~\eqref{PE}
admit a unique global smooth solution 
$$v\in C^\infty((0,\infty)\times \sN; T\Sigma),\quad \gd_\Sigma\pi_s\in C^\infty((0,\infty)\times \Sigma; T\Sigma).$$
\end{theorem}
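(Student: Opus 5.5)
The proof follows the strategy of Hieber--Kashiwabara for the flat primitive equations, adapted to the product manifold $\sN=\Sigma\times[-h,0]$. It proceeds in three stages: local well-posedness with smoothing, a priori estimates ruling out blow-up, and bootstrapping to $C^\infty$.

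\emph{Local theory.} First introduce the hydrostatic Helmholtz projection $\PH$ on $L_q(\sN;T\Sigma)$. Given $f$, solve the surface problem $\wdv(\wgd\pi)=\wdv\bar f$ on $\Sigma$; this is solvable since $\Sigma$ is closed and the compatibility condition $\int_\Sigma\wdv\bar f=0$ holds. Then set $\PH f=f-\wgd\pi$. The hydrostatic Stokes operator $A=-\PH(\Delta_\sN+\Ric)$ carries the boundary conditions $\partial_r v=0$ on $\Sigma_u$ and $v=0$ on $\Sigma_b$. Using a localization with respect to charts of $\Sigma$, compactness of $\Sigma$, and the flat-space results, show that $A$ has a bounded $H^\infty$-calculus on $L_q$. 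The Ricci term, $\Ric=\kappa g$, is a lower-order perturbation; it is absorbed after a shift, or treated via the Weitzenböck identity $\Delta_\Sigma+\Ric_\Sigma=-(d\delta+\delta d)$ on 1-forms. Maximal $L_p$–$L_q$ regularity follows. The time-trace space is $B^{2-2/p}_{qp}$ with the $\bar\sigma$ constraint. The critical choice used in the statement, with initial data in $B^{2/q}_{qp}$ and $\frac1p+\frac1q\le1$, requires a time-weighted maximal regularity framework $L_{p,\mu}$ with $\mu=\frac1p+\frac1q$. In that setting the bilinear term $\PH(\nabla_v v+w\partial_r v)$ is estimated in $L_{p,\mu}(L_q)$, using the anisotropic estimate that $w$ loses one horizontal derivative but is integrated in $r$. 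A contraction argument then gives local existence and uniqueness. The parabolic-smoothing trick (scaling in time together with translations on $\Sigma$ via the implicit function theorem, as in Prüss--Simonett) gives instant regularization to $C^\infty$ in $(0,T)\times\sN$.

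\emph{A priori estimates.} By smoothing we may restart at a positive time with $H^2$ data, so it suffices to bound $\|v\|_{H^1}$ (then $H^2$) on finite intervals. Decompose $v=\bar v+\tilde v$. The barotropic part $\bar v$ satisfies a 2D surface Navier--Stokes-type equation on $\Sigma$, forced by $\overline{\tilde v\otimes\tilde v}$-type terms. The baroclinic part $\tilde v$ satisfies an equation without pressure. Proceed in the following order:
\begin{enumerate}
\item the $L_2$ energy estimate, where the Ricci term is controlled because $\Sigma$ is compact;
\item an $L^\infty_tL^4_x$ bound for $\tilde v$, following Cao--Titi;
\item an $H^1$ bound for $\bar v$, via the 2D surface equation and Ladyzhenskaya on $\Sigma$;
\item an $L^2$ bound for $\partial_r v$;
\item the full $H^1$ bound, and then $H^2$ via maximal regularity.
\end{enumerate}

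\emph{Main obstacle.} The main difficulty is step 2 together with the commutator terms produced by the geometry. On $\Sigma$, $\nabla_v v$ is a covariant derivative. Integrating by parts against $|\tilde v|^2\tilde v$ generates curvature terms and Christoffel-type terms in local charts. These should be handled intrinsically using metric compatibility of $\nabla$ and $\wdv$, so that the cancellation $\int(\nabla_v u+w\partial_r u)\cdot|u|^2u=0$ survives. Here one uses $\wdv v+\partial_r w=0$ and the boundary conditions on $w$. The lower-order curvature terms are then absorbed by Gronwall.

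\emph{Conclusion.} With $H^2$ bounds that do not blow up in finite time, the local solution extends globally via the blow-up criterion from the local theory. Smoothing then yields $v\in C^\infty((0,\infty)\times\sN;T\Sigma)$ and, from the pressure equation, $\wgd\pi_s\in C^\infty$. Uniqueness is inherited from the local contraction argument.
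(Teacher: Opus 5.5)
Your overall architecture matches the paper's: hydrostatic projection, $H^\infty$-calculus, weighted maximal regularity with $\mu=1/p+1/q$, smoothing, a priori bounds for $\bar v$, $\tilde v$ and $\partial_r v$, then the blow-up criterion. \textbf{The main gap:} the a priori estimates do not close in the order you propose. The sequence ``$L_4$ for $\tilde v$ as in Cao--Titi, then $H^1$ for $\bar v$, then $L_2$ for $\partial_r v$'' works for Neumann conditions on both $\Sigma_u$ and $\Sigma_b$. Here $v=0$ on $\Sigma_b$, which creates two problems. First, averaging $\partial_r^2 v$ produces the flux $-\frac1h\partial_r v|_{\Sigma_b}$, which enters both the $\bar v$- and the $\tilde v$-equation. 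Second, testing the $\tilde v$-equation with $|\tilde v|_g^2\tilde v$ leaves the boundary integral $\int_{\Sigma_b}\langle\partial_r v,|\tilde v|_g^2\tilde v\rangle_g$, since $\tilde v=-\bar v\neq0$ on $\Sigma_b$.

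By the trace inequality, these terms can only be paid for with $\|\nabla^\sN v_r\|_{L_2(\sN)}^2$, which comes from the $\partial_r v$-estimate. That estimate contains $\int_{\Sigma_b}\langle\wgd\pi_s,v_r\rangle_g$ and $\||\tilde v|_g|\tilde v_r|_g\|_{L_2(\sN)}^2$. So it needs $\|\wgd\pi_s\|_{L_2(\Sigma)}^2$ from the $\bar v$-estimate and the dissipation of the $L_4$-estimate. The $\bar v$-estimate in turn needs $\||\tilde v|_g|\nabla^\sN\tilde v|_g\|_{L_2(\sN)}^2$ and again $\nabla^\sN v_r$. None of your steps 2--4 closes on its own.

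What is missing is to add the three differential inequalities with weights, $\frac{C_2}{4}\|\tilde v\|_{L_4}^4+\|v_r\|_{L_2}^2+20\|\nabla\bar v\|_{L_2(\Sigma)}^2$ with $C_2=2(C_\Sigma+C_1)$, so that all cross terms are absorbed, and run a single Gronwall argument. This coupling is the real difficulty. The transport cancellation you single out follows directly from metric compatibility, $\partial_r w=-\wdv v$, and $w=0$ on $\Sigma_u\cup\Sigma_b$.

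Likewise, ``then $H^2$ via maximal regularity'' is not automatic. For $p=q=2$ one has $\mu=1$, and $H^1=X_{1/2,2}$ is critical, so an $H^1$ bound cannot trigger criterion (e) of the local theory, nor give bounds in $B^{2/q+\eta}_{qp}$. The paper obtains $L_\infty H^2$ from two additional bounds: $\partial_t v$ in $L_\infty L_2$ (from the time-differentiated equation) and $\partial_r v$ in $L_\infty L_3$. Alternatively, if $T_+<\infty$, the $L_\infty H^1\cap L_2H^2$ bound puts $\sF(v)$ into $L_2((0,T_+);L_2)$. Maximal $L_2$-regularity then gives convergence of $v(t)$ in $H^1$ as $t\to T_+$, which contradicts maximality.

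\textbf{A second, smaller gap is in the linear theory.} $\Ph$ is nonlocal in the horizontal variables, so the $H^\infty$-calculus of $\sA$ cannot simply be obtained by localizing to flat charts. Moreover, when the Gauss curvature is not constant, $\Delta_\Sigma$ does not commute with $\PH$ (the commutator involves $\Ric$), so the flat argument does not transfer.

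The paper instead uses the projection-free operator $\cA=-(\Delta_\sN-\Ric-B_H)$ with $B_Hv=\frac1h(\PH-I)(\partial_r v|_{\Sigma_b})$. The identities $(\Delta_\Sigma-\Ric)\wgd\phi=\wgd\Delta_B\phi$ and $\wdv(\Delta_\Sigma-\Ric)u=\Delta_B\wdv u$, together with $B_H$ removing the gradient part of the bottom flux, show that $(\lambda+\cA)^{-1}$ maps $L_{q,\bar\sigma}$ onto $D(\sA)$. Hence $\lambda+\sA=(\lambda+\cA-2\Ph\Ric)|_{D(\sA)}$, and the calculus follows from that of $\cA$ by perturbation.

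Your Weitzenb\"ock remark points in this direction but has the wrong sign. In fact $-(d\delta+\delta d)=\Delta_\Sigma-\Ric$, so $\Delta_\Sigma+\Ric=-(d\delta+\delta d)+2\Ric$. This is why $\Ric$ must first be subtracted and $2\Ph\Ric$ added back.

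Finally, a closed surface has no translation group, and localized translations destroy $\wdv\bar v=0$. The paper obtains spatial smoothness instead from time scaling plus elliptic bootstrapping, using $(\lambda+\sA)^{-1}:H^s_{q,\bar\sigma}\to H^{s+2}_{q,\bar\sigma}$ and the bilinear estimate in $H^s_q$.
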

Here, $B^{2/q}_{qp,\bar\sigma}(\sf N; T\Sigma)$
denote the hydrostatic solenoidal fields in  $B^{2/q}_{qp}(\sN; T\Sigma)$.
See Section~\ref{S: hydrostatic Helmholtz} for more details.

\begin{proof}
This follows from Proposition~\ref{prop:local existence} with $\mu=1/p+1/q$, Proposition~\ref{prop: interpolation}, 
Remark~\ref{rem: pressure}, and Theorem~\ref{thm: global}.
\end{proof}
\begin{corollary}
\label{cor: global H1}
For each initial value $v_0\in H^1_{2,\bar\sigma}(\sN; T\Sigma)$  with $v_0=0$ on $\Sigma_b$,
the primitive equations~\eqref{PE}
admit a unique global smooth solution 
$$v\in C^\infty((0,\infty)\times \sN; T\Sigma),\quad \gd_\Sigma\pi_s\in C^\infty((0,\infty)\times \Sigma; T\Sigma).$$
\end{corollary}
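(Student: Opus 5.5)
The plan is to reduce the corollary to Theorem~\ref{thm: main-result} by choosing exponents so that $H^1_2$ sits inside the admissible initial data class. Take $q=2$ and $p=2$. Then $\frac1p+\frac1q=1\le 1$ is admissible, and the trace space is $B^{2/q}_{qp}=B^{1}_{22}(\sN;T\Sigma)$. Since $B^1_{22}=H^1_2$ with equivalent norms, this is exactly the space of the corollary. Here we use that $\sN$ is a compact manifold with boundary and that these spaces are defined by localization to charts, so the Euclidean identification transfers.

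Next I check that the hydrostatic solenoidal constraint and the boundary condition match. By the definition in Section~\ref{S: hydrostatic Helmholtz}, $B^{2/q}_{qp,\bar\sigma}$ is the closed subspace of $B^{2/q}_{qp}$ cut out by $\wdv\bar v=0$, possibly intersected with the range of the hydrostatic Helmholtz projection. Likewise, $H^1_{2,\bar\sigma}$ is cut out by the same condition inside $H^1_2$. So under $B^1_{22}=H^1_2$ the two subspaces coincide.

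The condition $v_0=0$ on $\Sigma_b$ carries over unchanged. It makes sense because $H^1_2$ has traces, and it is the compatibility condition required in Theorem~\ref{thm: main-result} at the critical exponent $2/q=1>1/q$. The Neumann condition $\partial_r v=0$ on $\Sigma_u$ needs no compatibility, since $2/q=1<1+1/q$.

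With these identifications, Theorem~\ref{thm: main-result} applied with $p=q=2$ gives a unique global solution with the stated $C^\infty$ regularity of $v$ and $\gd_\Sigma\pi_s$, which proves the corollary.

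The only delicate point is the borderline choice $\frac1p+\frac1q=1$, where the theorem relies on Proposition~\ref{prop:local existence} with $\mu=1$, that is, without a time weight. I would verify that the exponent condition in the theorem is stated with $\le$, which it is, so the case $p=q=2$ is genuinely covered. If instead a strict inequality were needed, the fallback would be to pick $q$ slightly larger than $2$ and use the embedding $H^1_2\hookrightarrow B^{2/q}_{qp}$. That embedding fails in three dimensions, though, which is why the admissible borderline case is the route to take.
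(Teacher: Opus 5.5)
Your proposal is correct and is exactly the paper's argument: take $p=q=2$, which is admissible since $\frac1p+\frac1q=1$ (so $\mu=1$); use $B^1_{22}=H^1_2$, which gives $B^1_{22,\bar\sigma}=H^1_{2,\bar\sigma}$; then apply Theorem~\ref{thm: main-result}. Your additional check is also right and is left implicit in the paper: here $\theta=\mu-1/p=1/2$ lies in $(1/4,3/4)$, so by Proposition~\ref{prop: interpolation} only the Dirichlet condition on $\Sigma_b$ is required of the initial data.
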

\begin{proof}
Taking $p=q=2$ and observing that $B^1_{22}=H^1_2$, the assertion is an immediate consequence of 
Theorem~\ref{thm: main-result}.
\end{proof}
\begin{remark}
\label{rem: global}
Theorem~\ref{thm: main-result} and Corollary~\ref{cor: global H1} establish 
the global existence of solutions without imposing any smallness assumptions on the initial data.
Choosing $q$  large in Theorem~\ref{thm: main-result}  shows that the primitive equations~\eqref{PE} admit global solutions for initial data of low regularity.
\end{remark}
The primitive equations were first introduced and studied  by Lions, Temam and Wang in a series of articles \cite{LTW92, LTW92b, LTW95}.
In these works, the authors established the existence of global weak solutions for initial data
$v_0 \in L_2$. The uniqueness of such solutions is still an open problem.

In the Euclidean setting, a breakthrough result has been established by Cao and Titi \cite{CaTi07},
who proved global well-posedness of the primitive equations in the strong sense for initial data $v_0\in H^1_2$
via $L_\infty((0,T); H^1_2)\cap L_2((0,T); H^2_2))$ a priori estimates.
We refer  also to the results by Kobelkov \cite{Kob07}, see also a result by Kukavica and Ziane \cite{KuZI07} for mixed Dirichlet-Neumann
boundary conditions.

These results have been extended to the setting of initial data in critical spaces of the form $B^{2/q}_{qp}$ and
scaling invariant spaces $L_\infty((0,T); L_1)$ in \cite{HiKa16, GGHHK20, GGHHK21}. The approach in the latter articles  is based on the introduction of the
hydrostatic Helmholtz decomposition and the hydrostatic Stokes operator. An extension of the hydrostatic Stokes operator to the space of
bounded functions and its consequences for the well-posedness of the primitive equations on these spaces was investigated in \cite{GGHHK20b}.

For various derivations  of the primitive equations by the
scaled Navier-Stokes equations we refer to Li and Titi \cite{LiTi19} and Furukawa, Giga, Hieber, Hussein, Kashiwabara and Wrona
{\color{blue}\cite{FGHHKW20, FGHHKW25}}.
For results in domains with uneven bottom, we refer to an article by  Drutsa \cite{Dru09} and for a survey article reviewing certain classes of
geophysical models including the primitive equations we refer to the work of Li and Titi \cite{LiTi18}.

Very recently, Korn \cite{Kor26} established  strong well-posedness on $[0,T]$ for arbitrary $T>0$ of the primitive equations
on $\mathbb{S}^2 \times [-h,0]$ subject to a
rigid lid assumption for arbitrary initial data in $H^1_2$, using a discrete exterior calculus on Delaunay-Voroni meshes and a limit procedure based on
compactness.

Our approach, described in the sequel, is fundamentally different. It is based on the hydrostatic Stokes operator (including the Ricci tensor) on manifolds and yields not only global solutions, but  \emph{global smooth solutions}.
Moreover, it applies to initial data in critical spaces of the form $B^{2/q}_{qp}$ and accommodates a much more general geometric setting.

A previous result for a general geometry is due to Drusta \cite{Dru11}, who proved the existence and uniqueness of \emph{generalized} solutions for
initial data in $H^2_2$ on $[0,T]$ for arbitrary $T>0$. Note that this solution is neither strong nor smooth.

\medskip\noindent
The manuscript is organized as follows. In Section~\ref{S: hydrostatic Helmholtz}, we introduce the \emph{hydrostatic Helmholtz projection}~$\Ph$, following the approach of \cite{HiKa16, GGHHK17}. We also establish several fundamental properties of~$\Ph$. In Section~\ref{S: hydrostatic Stokes operator}, we define the \emph{hydrostatic Stokes operator}~$\sA$. The main result of this section, Theorem~\ref{thm: H-infinity calculus}, shows that~$\sA$ admits a bounded $H^\infty$-calculus, a result that is also of independent interest. This property is then used in Proposition~\ref{prop:local existence}
to show that the primitive equations~\eqref{PE} admit unique strong solutions for initial data in critical spaces.
Moreover, the resulting solutions generate a smooth semiflow on the natural phase space.
It is further shown that these solutions regularize instantaneously and become smooth in both space and time for $t>0$.
In Section~\ref{S: global existence}, we establish a priori estimates for solutions of~\eqref{PE}; see Theorem~\ref{thm: a priori est}. These estimates constitute one of the core results of the paper and provide the key ingredient in the proof of global existence, which is established in Theorem~\ref{thm: global}.

In Appendix~\ref{S: Appendix A}, we collect results from differential geometry that will be used throughout the paper. Finally, as mentioned above, Appendix~\ref{S: Appendix B} provides a derivation of the primitive equations~\eqref{PE}, starting from the three-dimensional incompressible Navier--Stokes equations confined to a  thin collar neighborhood of~$\Sigma$.

\goodbreak
\bigskip\noindent
\textbf{Notation:}
We introduce notation that is used throughout the manuscript.\\
Given $f\in C(\sN; T\Sigma)$,
$$
\overline{f} (\cdot ) =\avint_{-h}^0 f(\cdot ,r)\, dr:= \frac{1}{h} \int_{-h}^0 f(\cdot ,r)\, dr
$$
is the vertical average of $f$. We then set  $\tilde f:= f- \bar f$.

The expressions $\gd_\sN$, $\nabla^\sN$ and $\dv_{\sN}$ denote the gradient, the Levi-Civita connection, and the divergence operator on $(\sN,g_{\sN})$.

The notation
$\la \, \cdot \, , \cdot \ra_g $ and $\la \,\cdot \,  , \cdot \ra_{g_\sN}$ stands for the Riemannian metric on $\Sigma$ and $\sN$, respectively,
We refer the reader to Appendix~\ref{S: Appendix A} , where we collect several well-known properties and results for Riemannian manifolds.

\smallskip
Given  $f\in L_2(\sN; T\Sigma)$  and $f\in L_2(\sN; T\sN)$, we write
$$
|f|_g = \sqrt{ \la f , f \ra_g}, \qquad |f|_{g_\sN} = \sqrt{ \la f , f \ra_{g_\sN}},
$$
respectively.
The expressions  $(\cdot , \cdot )_\Sigma$ and $(\cdot , \cdot )_\sN$ denote the duality pairing between $L_q(\Sigma; T\Sigma)$ and $L_{q^\prime}(\Sigma; T\Sigma)$,
and  between  $L_q(\sN; T\Sigma)$ and $L_{q^\prime}(\sN; T\Sigma)$, respectively, given by
\begin{align*}
(u , v)_\Sigma := \int_\Sigma \la u ,  v\ra _g \, d\mu_g ,  \qquad  (u , v)_\sN := \int_\sN \la u , v\ra _{g_\sN} \, d\mu_{g_\sN},
\end{align*}
where $d\mu_g$ and $d\mu_{g_\sN}$ denote the volume element on $(\Sigma,g)$ and $(\sN,g_{\sN})$, respectively, and
where $q^\prime$ is the dual exponent of $q\in (1,\infty)$.

\smallskip
Let $X$ and $Y$ be two Banach spaces and  let $T: X\to Y$ be a linear mapping. We denote by $D(T)$ and ${\rm Ker}(T)$  the domain and null space of $T$, respectively.
 $\cL(X,Y)$ stands for the set of all bounded linear operators from $X$ to $Y$,  $\cL(X):=\cL(X,X)$ and
$\Lis(X,Y)$ denotes the subset of $\cL(X,Y)$ consisting of linear isomorphisms from $X$ to $Y$.
We write $X \doteq Y$  if $X$ and $Y$ coincide up to equivalent norms.

\smallskip
For any $0\leq t_1<t_2<\infty$, $p\in (1,\infty)$ and $\mu\in (1/p,1]$, the $X$-valued $L_p$-spaces with temporal weight are defined by
$$
L_{p,\mu}((t_1,t_2);X):=\left\{ f: (t_1,t_2)\to X: \, t\mapsto t^{1-\mu}f(t)\in L_p((t_1,t_2);X)  \right\}.
$$
Similarly,
$$
H^k_{p,\mu}((t_1,t_2);X):=\{ f \in    H^k_{1,loc}((t_1,t_2);X):\,  \partial_t^j f\in L_{p,\mu}((t_1,t_2);X), \, j=0,1,\ldots,k \}.
$$
Throughout this manuscript, we assume that $p,q\in (1,\infty)$.
\goodbreak
\bigskip
\section{The hydrostatic Helmholtz projection}\label{S: hydrostatic Helmholtz}

Following the ideas in \cite{HiKa16}, we introduce the \emph{hydrostatic Helmholtz projection $\Ph$} for  $\sN$
and we collect some properties of $\Ph$ and of the space of \emph{hydrostatic solenoidal vector fields} on $\sN$.
Before doing so, we state a result concerning the existence of the classical Helmholtz projection on $L_q(\Sigma; T\Sigma)$.

Let $\Sigma$ be a smooth closed hypersurface in $\bR^3$, and
let $u\in L_q(\Sigma; T\Sigma)$ be given. Then it follows from \cite[Appendix B]{SSW25}
that the weak Neumann problem
$$
(\wgd \psi  , \wgd \phi)_{\Sigma}=(u, \wgd \phi)_{\Sigma} ,\quad  \phi\in \dot{H}^1_{q'}(\Sigma),
$$
has a unique  solution $\wgd \psi_u   \in L_q (\Sigma; T\Sigma)$, with $q'$ being the dual exponent of $q$.
In fact, the result in \cite[Appendix B]{SSW25} is more general, as it also applies to Riemannian manifolds with boundary.
The \emph{Helmholtz projection} $\PH$ on $\Sigma$ is defined by
\begin{equation}
\label{def-PH}
\PH u := u - \wgd \psi_u .
\end{equation}
For $s>0$, let
\begin{align*}
L_{q,\sigma}(\Sigma;T\Sigma)& := \PH(L_q(\Sigma; T\Sigma)), \\
H^s_{q,\sigma} (\Sigma;T\Sigma) &:=H^s_q(\Sigma;T\Sigma) \cap L_{q,\sigma}(\Sigma;T\Sigma)
\end{align*}
equipped with the norm inherited from $H^s_q(\Sigma; T\Sigma)$.

\begin{proposition}
\label{pro: Helmholtz-Sigma}
The Helmholtz projection $\PH$ on $\Sigma$ has the following properties.
\begin{itemize}
\item[{\rm (a)}] Let $s\in [0,2]$
and $q\in (1,\infty)$. Then $\PH\in \cL(H^s_q (\Sigma; T\Sigma) , H^s_{q, \sigma} (\Sigma ; T\Sigma))$ and $\PH$ is a projection onto $H^s_{q, \sigma} (\Sigma; T\Sigma)$. Moreover, for any $u\in L_{q} (\Sigma; T\Sigma)$ and $v\in L_{q'} (\Sigma; T\Sigma)$, it holds that
$$
(\PH u, v)_\Sigma = (u, \PH v)_\Sigma.
$$
\item[{\rm (b)}]
$
H^k_{q,\sigma} (\Sigma;T\Sigma)=\{v\in H^k_q(\Sigma;T\Sigma) :\, \wdv v=0\}
$
for $k=0,1,2$.
When $k=0$, the condition $\wdv  v=0$ reads as
$$
  (v , \wgd  \phi)_\Sigma = 0 , \quad \phi\in \dot{H}^1_{q'}(\Sigma).
$$
\item[{\rm (c)}]
 ${\rm Ker}(\PH)=\{\wgd  \phi: \phi\in \dot{H}^1_q(\Sigma) \}$.

\end{itemize}

\end{proposition}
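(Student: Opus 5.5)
The plan is to reduce everything to the solvability and regularity of the weak Neumann problem on the closed manifold $\Sigma$, as provided by \cite[Appendix B]{SSW25}, together with elliptic regularity for $\Delta_\Sigma$ on $\dot H^s_q(\Sigma)$. Throughout we write $\wgd\psi_u = \wgd\,\Delta_\Sigma^{-1}\wdv u$. Here $\Delta_\Sigma$ is invertible on mean-zero functions because $\Sigma$ is closed and connected. In this notation $\PH = I - \wgd\,\Delta_\Sigma^{-1}\wdv$.

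\textbf{Part (a).}
\begin{itemize}
\item \emph{Boundedness on $L_q$.} The map $u\mapsto \wgd\psi_u$ is bounded on $L_q(\Sigma;T\Sigma)$ by the unique solvability in \cite[Appendix B]{SSW25}; the estimate follows from the closed graph theorem or directly from that reference.
\item \emph{Boundedness on $H^s_q$, $s\in(0,2]$.} Here I use elliptic regularity. The operator $\wdv$ maps $H^s_q$ into $H^{s-1}_q$ with mean zero, and the inverse $\Delta_\Sigma^{-1}$ gains two derivatives on mean-zero $H^{s-1}_q$. Hence $\wgd\psi_u\in H^s_q$ with a bound. For $s\in(0,1)$ the space $H^{s-1}_q$ is a negative-order space, so it is cleaner to interpolate between $s=0$ and $s=1$. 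This uses that $H^s_q$ is a complex interpolation scale on the compact manifold $\Sigma$.
\item \emph{Projection property.} If $\wdv u=0$ weakly, then $\psi_u$ is constant, so $\PH u=u$; hence $\PH^2=\PH$.
\item \emph{Range.} By definition $\PH(H^s_q)\subset H^s_q\cap L_{q,\sigma}$. Conversely, every $u\in H^s_q\cap L_{q,\sigma}$ satisfies $u=\PH u$, so $u$ lies in the range.
\item \emph{Symmetry.} For $u\in L_q$ and $v\in L_{q'}$, expand
$$
(\PH u,v)_\Sigma=(u,v)_\Sigma-(\wgd\psi_u,v)_\Sigma .
$$
By the defining weak identities,
$$
(\wgd\psi_u,v)_\Sigma=(\wgd\psi_u,\wgd\psi_v)_\Sigma=(u,\wgd\psi_v)_\Sigma .
$$
This shows $(\PH u,v)_\Sigma=(u,\PH v)_\Sigma$.
\end{itemize}

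\textbf{Part (b).}
\begin{itemize}
\item First characterize $L_{q,\sigma}$. A field $u$ satisfies $u=\PH u$ exactly when $\wgd\psi_u=0$, i.e. when $(u,\wgd\phi)_\Sigma=0$ for all $\phi\in\dot H^1_{q'}(\Sigma)$. This is the $k=0$ statement.
\item For $k=1,2$, the space is the intersection of $H^k_q$ with $L_{q,\sigma}$. For $u\in H^1_q$, integration by parts on the closed manifold (no boundary terms) gives $(u,\wgd\phi)_\Sigma=-(\wdv u,\phi)_\Sigma$.
\item Since $\wdv u$ has mean zero, the weak condition is equivalent to $\wdv u=0$ pointwise a.e. This uses density of $\dot H^1_{q'}$ test functions modulo constants.
\end{itemize}

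\textbf{Part (c).}
\begin{itemize}
\item The kernel of $\PH$ is contained in the gradients: $\PH u=0$ means $u=\wgd\psi_u$ with $\psi_u\in\dot H^1_q$.
\item Conversely, let $u=\wgd\phi$. Then $\psi=\phi$ solves the weak Neumann problem, and by uniqueness $\wgd\psi_u=u$. Hence $\PH u=0$.
\end{itemize}

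\textbf{Main obstacle.} The only nontrivial point is the $H^s_q$-regularity estimate for the weak Neumann problem, specifically the endpoint $s=2$ and the fractional range. I would first try to cite $L_q$-elliptic theory for $\Delta_\Sigma$ on closed manifolds, which is available through localization or via the bounded $H^\infty$-calculus of $-\Delta_\Sigma$ on mean-zero functions. With that in hand, the fractional cases follow by interpolating the integer cases $s=0,1,2$.
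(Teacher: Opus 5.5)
Your proposal is correct and follows essentially the same route as the paper. The paper cites \cite[Appendix B]{SSW25} for the solvability and $H^s_q$-boundedness in (a) and says the remaining properties follow as in the Euclidean case; your symmetry, kernel and divergence-characterization arguments are exactly those standard arguments, and your elliptic-regularity-plus-interpolation treatment of $s\in(0,2]$ mirrors what the paper later does in Proposition~\ref{pro:HP_improved} (note that the paper reserves $\Delta_\Sigma$ for the connection Laplacian on vector fields, so the scalar operator you invert on mean-zero functions is $\Delta_B$).
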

\begin{proof}
 We refer to \cite[Appendix B]{SSW25} for the first part of (a). The proof
of the remaining properties follows along the same lines as in the Euclidean case.
%
\end{proof}

Following \cite{HiKa16}, see also~\cite{GGHHK17},  we define the \emph{hydrostatic Helmholtz projection $\Ph$}  by
\begin{equation}
\label{def-Ph}
\Ph v := \PH \bar v + \tilde v, \qquad  v \in L_q(\sN;T\Sigma).
\end{equation}
Note that $\bar v $ is independent of the $r$-variable and, thus, can be considered a vector field on $\Sigma$.
We now list some properties of $\Ph$ that are immediate consequences of the definition.
It follows that
\begin{equation}
\label{I-Ph}
(I-\Ph) v  = v -\PH \bar{v} - \tilde{v} = \bar{v} -  \PH \bar{v} = (I-\PH) \bar{v}  .
\end{equation}
It is clear that
$
  \int_{-h}^0 \tilde{v}(\cdot, \xi)\, d \xi \equiv 0.
$
Thus,  for any $v\in C^1(\sN;T\Sigma)$, it holds that
\begin{equation*}
\wdv (\overline{\Ph v } )= \wdv  (\overline{\PH \bar{v}}) = \wdv   (\PH \overline{v}) = 0
\end{equation*}
by the definition of $\PH$.
Moreover, if $\phi(\cdot,r)\equiv  \bar{\phi}(\cdot)$, then
\begin{equation}
\label{Ph-grad}
\Ph (\gd_\Sigma \phi)= \PH (\gd_\Sigma  \bar{\phi}) + \widetilde{\gd_\Sigma  \phi} = \gd_\Sigma  \phi - \gd_\Sigma  \bar{\phi}=0.
\end{equation}
It follows from H\"older's  inequality that
\begin{equation}
\label{bdd-avg}
\begin{split}
\|\bar{v}\|_{L_q(\sN)} & = \frac{1}{h}\left\| \int_{-h}^0 v(\cdot,\xi)\, d\xi \right\|_{L_q(\sN)}
 = \frac{1}{h} \left( \int_\sN \left| \int_{-h}^0 v(\cdot,\xi)\, d\xi \right|^q_g \, d\mu_{g_\sN} \right)^{1/q} \\
&= h^{1/q -1} \left( \int_\Sigma   \left| \int_{-h}^0 v(\cdot,\xi)\, d\xi \right|^q_g   \, d\mu_g \right)^{1/q} \\
& \leq  \left(\int_\Sigma  \int_{-h}^0   |v(\cdot,\xi)|_g^q   \, d\xi  \,\, d\mu_g \right)^{1/q}   \\
&  =  \| v\|_{L_q(\sN)} ,
\end{split}
\end{equation}
and for any $k\in \bN$,
\begin{equation}
\label{bdd-avg-gd}
\begin{split}
\|( \nabla^\sN)^k \bar{v}\|_{L_q(\sN)} = \| \nabla^k \bar{v}\|_{L_q(\sN)}  = \| \overline{\nabla^k v}\|_{L_q(\sN)}    \leq   \| \nabla^k v\|_{L_q(\sN)}.
\end{split}
\end{equation}
\eqref{bdd-avg} and \eqref{bdd-avg-gd} readily imply
\begin{equation}
\label{cont hydro-Helmholtz}
\| \Ph v\|_{H^k_q(\sN)}   \leq \|\PH \bar{v}\|_{H^k_q(\sN)}  + \| v - \bar{v}\|_{H^k_q(\sN)}   \leq C \|v\|_{H^k_q(\sN)},
\quad k\in \{0,1,2\} .
\end{equation}
in virtue of \cite[Lemma~B.6]{SSW25}.
For $s \in [0,2]$, we define the space of \emph{hydrostatic solenoidal vector fields} as
\begin{align*}
L_{q,\bar{\sigma}}(\sN;T\Sigma)& := \Ph(L_q(\sN; T\Sigma)), \\
H^s_{q,\bar{\sigma}} (\sN;T\Sigma) &:=H^s_q(\sN;T\Sigma) \cap L_{q,\bar{\sigma}}(\sN;T\Sigma)
\end{align*}
equipped with the norm inherited from $H^s_q(\sN; T\Sigma)$.
\smallskip
We are now ready to collect several additional properties of $\Ph$ that will be used in the following.
\begin{proposition}
\label{pro: properties-Ph}
The hydrostatic Helmholtz projection $\Ph$ enjoys the following properties:
\begin{itemize}
\item[{\rm (a)}] Let $s\in [0,2]$ and $q\in (1,\infty)$. Then $\Ph\in \cL(H^s_q (\sN; T\Sigma) , H^s_{q, \overline{\sigma}} (\sN ; T\Sigma))$ and $\Ph$ is a projection onto $H^s_{q, \overline{\sigma}} (\sN; T\Sigma)$. Moreover, for any $u\in L_{q} (\sN; T\Sigma)$ and $v\in L_{q'} (\sN; T\Sigma)$, it holds that
$$
(\Ph u, v)_\sN = (u, \Ph v)_\sN.
$$
\item[{\rm (b)}]
Given $\theta\in (0,1)$ and $p\in (1,\infty)$, let $(\cdot,\cdot)_\theta$ stand for either the complex interpolation functor $[\cdot,\cdot]_\theta$,
or the real interpolation functor $(\cdot,\cdot)_{\theta,p}$, respectively.
Then
$$
(L_{q,\bar{\sigma}}(\sN;T\Sigma), H^2_{q,\bar{\sigma}}(\sN;T\Sigma))_\theta  \doteq (L_q(\sN;T\Sigma), H^2_q(\sN;T\Sigma))_\theta \cap  L_{q,\bar{\sigma}}(\sN;T\Sigma).
$$
\item[{\rm (c)}]
$
H^k_{q,\bar{\sigma}} (\sN;T\Sigma)=\{v\in H^k_q(\sN;T\Sigma) : \wdv \bar{v}=0\}
$
for $k \in \{0,1,2\}$.
When $k=0$, the condition $\wdv \bar{v}=0$ reads as
$$
  (\bar{v} , \wgd  \phi)_\Sigma = 0 , \quad \phi\in \dot{H}^1_{q'}(\Sigma).
$$
\item[{\rm (d)}]
Suppose $v\in L_q(\sN; T\Sigma)$. Then there exists $\phi\in \dot{H}^1_q(\Sigma)$ such that $ (I-\Ph) v = \wgd  \phi$.
\vspace{1mm}
\item[{\rm (e)}]
 ${\rm Ker}(\Ph)=\{\wgd  \phi: \phi\in \dot{H}^1_q(\Sigma) \}$.

\end{itemize}
\end{proposition}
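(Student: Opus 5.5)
The plan is to reduce every assertion to the corresponding property of $\PH$ in Proposition~\ref{pro: Helmholtz-Sigma}, using the decomposition $v=\bar v+\tilde v$ and the fact that averaging commutes with $\PH$.

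For (a), boundedness for integer $s\in\{0,1,2\}$ is \eqref{cont hydro-Helmholtz}. For non-integer $s$ we interpolate, which gives boundedness into $H^s_q(\sN;T\Sigma)$. Since the range lies in $\Ph(L_q)=L_{q,\bar\sigma}$, the map lands in $H^s_{q,\bar\sigma}$. For the projection property, note that $\PH\bar v$ is independent of $r$, so $\overline{\Ph v}=\PH\bar v$ and $\widetilde{\Ph v}=\tilde v$. Hence
\[
\Ph^2 v=\PH\PH\bar v+\tilde v=\Ph v .
\]
In particular $\Ph$ is the identity on $L_{q,\bar\sigma}$, so it maps $H^s_q$ onto $H^s_{q,\bar\sigma}$. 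For symmetry, write $(u,v)_\sN=h(\bar u,\bar v)_\Sigma+(\tilde u,\tilde v)_\sN$; the cross terms vanish because $\int_{-h}^0\tilde u\,dr=0$. Symmetry of $\Ph$ then follows from the symmetry of $\PH$ in Proposition~\ref{pro: Helmholtz-Sigma}(a).

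Parts (c), (d) and (e) follow directly from this structure.
\begin{itemize}
\item[(d)] By \eqref{I-Ph}, $(I-\Ph)v=(I-\PH)\bar v=\wgd\psi_{\bar v}$, and $\psi_{\bar v}\in\dot H^1_q(\Sigma)$ since $\bar v\in L_q(\Sigma;T\Sigma)$ by \eqref{bdd-avg}.
\item[(e)] If $\Ph v=0$, then $v=(I-\Ph)v$, which is a gradient by (d). Conversely, $\Ph\wgd\phi=0$ for $r$-independent $\phi$ by \eqref{Ph-grad}.
\item[(c)] If $v=\Ph v$, then $\bar v=\PH\bar v$. By Proposition~\ref{pro: Helmholtz-Sigma}(b), this gives $\wdv\bar v=0$, in the weak sense when $k=0$, with $\bar v\in H^k_q(\Sigma)$ by \eqref{bdd-avg-gd}. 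Conversely, if $\wdv\bar v=0$, then $\PH\bar v=\bar v$ by the same proposition, since $\bar v$ then lies in the range of $\PH$. Therefore $\Ph v=\bar v+\tilde v=v$.
\end{itemize}

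The main obstacle is (b). We use the standard retraction/coretraction argument (Triebel's theorem on complemented subspaces). $\Ph$ is a bounded projection on both $L_q$ and $H^2_q$, with ranges $L_{q,\bar\sigma}$ and $H^2_{q,\bar\sigma}$. Therefore
\[
(L_{q,\bar\sigma},H^2_{q,\bar\sigma})_\theta=\Ph\bigl((L_q,H^2_q)_\theta\bigr),
\]
with equivalent norms. Any $v$ in this space satisfies $v=\Ph v$, so it lies in $(L_q,H^2_q)_\theta\cap L_{q,\bar\sigma}$. Conversely, an element $v$ of the intersection satisfies $v=\Ph v\in\Ph((L_q,H^2_q)_\theta)$.

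The point requiring care is that one and the same operator acts as a projection on both endpoint spaces. For $H^2_q$, this needs the fact that $\PH$ preserves $H^2_q(\Sigma)$, which is Proposition~\ref{pro: Helmholtz-Sigma}(a). It also needs that averaging and subtracting the average preserve $H^2_q(\sN)$, including $r$-derivatives; this holds because $\partial_r\bar v=0$ and $\partial_r\tilde v=\partial_r v$. No boundary conditions are imposed in $H^2_q$ here, so no compatibility issues arise.
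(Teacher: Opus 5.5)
Your proposal is correct and follows essentially the same route as the paper. You get boundedness from \eqref{cont hydro-Helmholtz} plus interpolation, reduce idempotence and symmetry to those of $\PH$ via the splitting $v=\bar v+\tilde v$, derive (c), (d), (e) from \eqref{I-Ph}, \eqref{Ph-grad} and the range characterization of $\PH$, and obtain (b) from Triebel's theorem on complemented subspaces. The only cosmetic difference is in (c): you cite Proposition~\ref{pro: Helmholtz-Sigma}(b) directly, whereas the paper re-derives the weak divergence condition from the symmetry of $\PH$ and the definition of $\psi_u$.
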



\begin{proof}
(a)
It follows from~\eqref {cont hydro-Helmholtz} that $\Ph$ is a continuous linear operator from  $H^k_q(\sN; T\Sigma)$ into $H^k_{q,\bar{\sigma}} (\sN;T\Sigma) $ for $k\in \{0,1,2\}$.
The continuity of the map $\Ph$ for the non-integer cases follows from  interpolation theory. See \cite[Theorem~13.1]{Ama13}.
The definition of $\Ph$ and the property ${\mathbb P}^2_H= \PH$ imply
\begin{align*}
{\mathbb P}^2_h v &= \Ph(\PH \bar{v} + \tilde{v} )= \PH\overline{(\PH \bar{v} +  \tilde{v} )} + \widetilde{ \PH \bar{v} +  \tilde{v} }\\
&= {\mathbb P}^2_H \bar{v} + \widetilde{ \PH \bar{v}} +  \tilde{v} = \PH \bar{v} +   \tilde{v} =\Ph v.
\end{align*}
Therefore, $\Ph$ is a projection.

Given any $u\in L_{q} (\sN; T\Sigma)$ and $v\in L_{q'} (\sN; T\Sigma)$ we have by \eqref{def-Ph} and the properties of $\PH$,
\begin{align*}
(\Ph u, v)_\sN & = (\PH \bar u, v)_\sN + (\tilde u, v)_\sN \\
    & =  (\PH \bar u, \bar v)_\sN + (u, \tilde v)_\sN \\
    & =  (\bar u, \PH \bar v)_\sN + (u, \tilde v)_\sN  \\
    & =  (u, \PH \bar v)_\sN + (u, \tilde v)_\sN  = (u, \Ph v)_{\sN},
 \end{align*}
where we have repeatedly used  the fact that $(\bar f, \tilde g)_{\sN}=0$ for $f\in L_q (\sN; T\Sigma)$ and $g\in L_{q^\prime}(\Sigma; T\Sigma)$.

\medskip\noindent
(b) This follows from part (a) and \cite[Theorem~1.17.1.1]{Tri78}.

\medskip\noindent
(c) We assume first that $k=0$. Let $v\in L_{q,\bar{\sigma}}(\sN; T\Sigma )$ be given. Then $v=\Ph v$ and this implies $\bar v = \PH \bar v$.
Hence, for each $\phi \in \dot{H}^1_{q^\prime}(\Sigma)$  it  follows from Proposition \ref{pro: Helmholtz-Sigma}(c)
\begin{align*}
(\bar v, \gd_\Sigma \phi)_\Sigma = (\PH \bar v, \gd_\Sigma \phi)_\Sigma = (\bar v, \PH \gd_\Sigma \phi)_\Sigma =0.
\end{align*}
Suppose now that
$$(\bar v, \gd_\Sigma \phi)_\Sigma =0,\qquad \forall \phi \in \dot{H}^1_{q^\prime}(\Sigma),$$
for some $v\in L_q(\sN; T\Sigma)$.
It follows from  the definition of $\PH$, see \eqref{def-PH}, that $\PH \bar v= \bar v$. Hence, $\Ph v= \PH \bar v + \tilde v =\bar v + \tilde v=v$,
and this implies  $v\in L_{q,\bar\sigma}(\sN; T\Sigma)$.

\smallskip\noindent
Now we consider the case $k>0$.
Assume that $v\in H^k_{q,\bar{\sigma}} (\sN;T\Sigma)$. Then $v=\Ph v$ and it follows as above that $\PH \bar v =\bar v$.
It is clear that $\overline{v}\in H^k_q (\sN;T\Sigma)$ and we readily conclude that
\begin{align*}
\wdv \bar{v}   = \wdv \PH \bar v =0.
\end{align*}
This proves the inclusion
$
 H^k_{q,\bar{\sigma}} (\sN;T\Sigma) \subset \{v\in H^k_q(\sN;T\Sigma) :\, \wdv  \bar{v}=0\}.
$
Conversely,  assume that $v\in H^k_q(\sN;T\Sigma) $ and $\wdv \bar{v}=0$. Then it follows from~\eqref{I-Ph} that
\begin{align*}
(I-\Ph) v & = (I- \PH) \bar{v} = 0,
\end{align*}
where the second equality follows from the fact that $\bar{v}\in H^k_q(\Sigma;T\Sigma)$ and the definition of $\PH$. This proves the converse inclusion.

\medskip\noindent
(d) It follows from \eqref{I-Ph} that  $(I-\Ph) v = (I-\PH) \overline{v}$.
We note on a side that $\bar v \in L_q(\Sigma, T\Sigma)$.
By the definition of $\PH$, see \eqref{def-PH},
 $(I-\PH)\bar v =\gd_\Sigma \psi_{\bar v}$
 for a (unique) function $\psi_{\bar v}\in \dot{H}^1_{q}(\Sigma)$.
 Taking $\phi = \psi_{\bar v}$ yields the assertion.

 \medskip\noindent
 (e) Suppose $v\in {\rm Ker}(\Ph)$. Then $v\in L_q(\sN; T\Sigma)$ and $(I-\Ph)v=v$. By the assertion in (d), $v=\gd_\Sigma\phi$ for some function $\phi \in \dot{H}^1_{q}(\Sigma).$
Suppose, on the contrary, that $v= \gd_\Sigma \phi$ for  some function $\phi\in \dot{H}^1_q(\Sigma)$.
Then it follows from~\eqref{Ph-grad} that $\Ph v=0$.

\noindent
This concludes the proof of Proposition~\ref{pro: properties-Ph}.
\end{proof}
\begin{remark}
It follows from Proposition~\ref{pro: properties-Ph}(e) that $\Ph$ annihilates the pressure term $\gd_\Sigma \pi_s$.
\label{rem: Ph annihilates pressure}
\end{remark}

\section{The hydrostatic Stokes operator}\label{S: hydrostatic Stokes operator}
In this section, we introduce and study the properties of the \emph{hydrostatic Stokes operator}
 $$\sA : D( \sA  )\to L_{q, \bar{\sigma}}(\sN;T\Sigma),$$
 defined by
\begin{equation}
\label{def: hydrostatic-Stokes}
\sA  v  = - \Ph ( \Delta_\sN  +	 \Ric  )v ,
\end{equation}
where $D( \sA  )=\{v  \in H^2_{q,\bar{\sigma}} (\sN;T\Sigma) : \, v=0 \text{ on }\Sigma_b, \ \ \partial_r v=0 \text{ on }\Sigma_u\}.$

\medskip\noindent
It turns out to be beneficial to first consider a related but simpler operator $\cA$, obtained by temporarily neglecting the hydrostatic projection $\Ph$
and adding some lower-order terms.
Hence, after this simplification, we are dealing with an elliptic operator without having to contend with the additional nonlocal effects introduced by $\Ph$.
The lower-order terms are introduced in order to ensure that $\cA$ commutes with $\Ph$  in the sense described in Proposition~\ref{pro: properties of lambda-A}.

\goodbreak

\medskip\noindent
Let us  then first consider the operator  $\cA: D( \cA)\to L_q(\sN; T\Sigma)$, defined by
\begin{equation}
\label{def: cal-A}
\cA v= - (\Delta_\sN  - \Ric    -B_H) v
\end{equation}
with $D(\cA)=\{v  \in H^2_q (\sN;T\Sigma) : \, v=0 \text{ on }\Sigma_b,  \ \  \partial_r v=0 \text{ on }\Sigma_u\}$,
where
$$B_H v = \frac{1}{h} (\PH-I) (\partial_r v |_{\Sigma_b}).$$
One readily verifies that  $B_H \in \cL(W^{1+1/q+\delta}_q(\sN;T\Sigma), L_q(\sN;T\Sigma))$ for any $\delta>0$.
We observe that by~\eqref{I-Ph}
$$
\sA = \Ph(\cA - 2 \Ric - B_H)= \Ph(\cA - 2 \Ric).
$$

We will show that $\cA$ admits a bounded $H^\infty$-calculus on $L_q(\sN; T\Sigma)$. For this we will exploit the fact that   $\cA$ can be decomposed as
$\cA=\cA_0 + \cA_1 +\cQ$, where $\cA_0 = -\Delta_\Sigma$ is the connection Laplacian on $\Sigma$,
$\cA_1= -\partial^2_r$, and $\cQ=  \Ric  +B_H$ is a lower order perturbation.
Fortunately, we can then use results in~\cite{SSW25} to show that $\cA_0$ admits an $\cR$-bounded $H^\infty$-calculus,
while we prove an analogous property for $\cA_1$ by a careful analysis of the associated resolvent problem.
This implies, in turn, that $\cA_0 +\cA_1$ admits a bounded $H^\infty$-calculus as well, and
the assertion for $\cA$  then follows from a perturbation argument.

\begin{proposition}
\label{prop: H calculus of A}
Let $1<q<\infty$.
There exists some $\lambda_0 \geq 0$ such that for all $\lambda > \lambda_0$
the operator $\lambda +\cA$ admits a bounded $H^\infty$-calculus  in $L_q(\sN;T\Sigma)$ with $H^\infty$-angle $<\pi/2$.
\end{proposition}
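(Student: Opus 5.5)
We follow the strategy outlined before the statement and write $\cA=\cA_0+\cA_1+\cQ$, with $\cA_0=-\Delta_\Sigma$ acting in the $\Sigma$-variable, $\cA_1=-\partial_r^2$ acting in $r\in(-h,0)$ with the mixed boundary conditions $v(-h)=0$, $\partial_r v(0)=0$, and $\cQ=\Ric+B_H$. The space $L_q(\sN;T\Sigma)$ is identified with $L_q((-h,0);L_q(\Sigma;T\Sigma))$, which is a UMD space. In this picture $\cA_0$ acts pointwise in $r$ and $\cA_1$ acts pointwise in $\Sigma$, so their resolvents commute.

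\textbf{Step 1: the horizontal operator.} From \cite{SSW25} we take that, for some $\omega_0\ge0$, the operator $\omega_0+\cA_0=\omega_0-\Delta_\Sigma$ on $L_q(\Sigma;T\Sigma)$ with domain $H^2_q(\Sigma;T\Sigma)$ has an $\cR$-bounded $H^\infty$-calculus with angle $<\pi/2$. Its canonical extension to $L_q((-h,0);L_q(\Sigma;T\Sigma))$ keeps this property, since it acts fiberwise and $L_q$-extensions preserve $\cR$-boundedness.

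\textbf{Step 2: the vertical operator.} Let $\cA_1=-\partial_r^2$ on $L_q((-h,0);X)$ with $X=L_q(\Sigma;T\Sigma)$ and the mixed Dirichlet--Neumann conditions. We would prove a bounded $H^\infty$-calculus of angle $0$, and in fact $\cR$-boundedness, in one of two ways.

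\begin{itemize}
\item \emph{Reflection.} Extend $v$ evenly across $r=0$ and oddly across $r=-h$. This turns the problem into $-\partial_r^2$ on the torus $\bR/4h\bZ$ restricted to a closed invariant subspace. The periodic Laplacian has an $\cR$-bounded $H^\infty$-calculus on $L_q(\mathbb T;X)$ for UMD $X$, by the operator-valued Mikhlin/Marcinkiewicz multiplier theorem. The calculus then passes to the invariant subspace, because the symmetrizations are bounded projections that commute with the resolvent.
\item \emph{Explicit resolvent.} Write the Green's function of $\lambda-\partial_r^2$ and check $\cR$-bounds on sectors directly.
\end{itemize}

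The operator $\cA_1$ is invertible with spectrum $\{((k+\tfrac12)\pi/h)^2\}$, so it is sectorial of angle $0$. Being scalar-type in $X$, its calculus is automatically $\cR$-bounded, because scalar operators commute with everything.

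\textbf{Step 3: the sum.} The resolvents of $\omega_0+\cA_0$ and $\cA_1$ commute. One of them has an $\cR$-bounded $H^\infty$-calculus, the other a bounded $H^\infty$-calculus, and the sum of the angles is $<\pi$. The Kalton--Weis sum theorem then gives that $\omega_0+\cA_0+\cA_1$ is closed on $D(\cA_0)\cap D(\cA_1)$ and has a bounded $H^\infty$-calculus of angle $<\pi/2$. We would also check that $D(\cA_0)\cap D(\cA_1)=D(\cA)$ with equivalent norms, using the mixed-derivative estimate that comes from the sum theorem or from elliptic regularity on $\sN$.

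\textbf{Step 4: perturbation (main obstacle).} The term $\Ric$ is bounded, so it is harmless. The term $B_H$ needs more care: it involves the trace $\partial_r v|_{\Sigma_b}$, and it is bounded from $W^{1+1/q+\delta}_q$ into $L_q$. Since $1+1/q+\delta<2$, we can pick $\alpha\in(0,1)$ with $D((\omega_0+\cA_0+\cA_1)^\alpha)\hookrightarrow W^{1+1/q+\delta}_q$. This embedding holds because complex interpolation spaces between $L_q$ and $D(\cA)$ are Bessel potential spaces with boundary conditions, and these embed into $H^{2\alpha}_q$. Hence $B_H$ is relatively bounded of lower order with respect to $(\omega_0+\cA_0+\cA_1)^\alpha$. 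We then apply the standard perturbation result for lower-order perturbations of operators with a bounded $H^\infty$-calculus, for instance Kunstmann--Weis or Prüss--Simonett Corollary 3.3.15. It yields $\lambda_0$ such that $\lambda+\cA$ has a bounded $H^\infty$-calculus with angle $<\pi/2$ for every $\lambda>\lambda_0$.

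The delicate points are the interpolation/embedding identification for the mixed boundary conditions in Step 4, and the verification of the domain identity in Step 3.
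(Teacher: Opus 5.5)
Your architecture matches the paper's. You use the splitting $\cA=\cA_0+\cA_1+\cQ$ and take the calculus of $\lambda+\cA_0$ from \cite{SSW25}, extended fiberwise. You then apply the sum theorem for resolvent-commuting operators \cite[Corollary~4.5.11]{PrSi16}, identify $D(\cA_0)\cap D(\cA_1)=D(\cA)$, and finish with the lower-order perturbation by $\Ric+B_H$ \cite[Proposition~3.3.14, Corollary~3.3.15]{PrSi16}.

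The genuine difference is how you obtain the calculus of $\cA_1=-\partial_r^2$. The paper reduces to $L_q((-h,0);\bR^n)$ via local coordinates and solves the resolvent problem by hand (zero extension, half-line Dirichlet problem, explicit exponential correction). It then localizes with $\xi_0^2+\xi_1^2=1$ to half-line Dirichlet and Neumann Laplacians, shows the remainder $R(\lambda)$ is $O(|\lambda|^{-3/2})$, and concludes as in \cite{SSW25}.

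Your reflection argument is correct:
\begin{itemize}
\item even reflection at $r=0$ requires exactly $\partial_r v(0)=0$;
\item odd reflection at $r=-h$ requires exactly $v(-h)=0$;
\item the symmetrizer over the group $\{r,\,-r,\,-2h-r,\,r-2h\}$ of $\bR/4h\mathbb{Z}$, weighted by the obvious character, is a bounded projection commuting with $\partial_r^2$;
\item the scalar Marcinkiewicz multiplier theorem in $L_q(\mathbb{T};X)$, $X$ UMD, then gives a bounded calculus of angle $0$.
\end{itemize}
Your route is shorter and treats $X$-valued functions directly. It needs no shift, since the spectrum on the invariant subspace is $\{((k+\tfrac12)\pi/h)^2\}$. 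The paper's localization is heavier but robust: it survives $r$-dependent coefficients or boundary conditions incompatible with reflection.

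One justification must be replaced. ``Scalar-type in $X$, hence automatically $\cR$-bounded because scalar operators commute with everything'' is not a valid reason. For a UMD space $X$ without Pisier's property $(\alpha)$, the calculus of $-\partial_r^2\otimes I_X$ is in general not $\cR$-bounded. Testing the $\cR$-bound on lacunary trigonometric sums shows that it would force property $(\alpha)$ on $X$. The multiplier theorem gives only boundedness.

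What is true, and what the paper uses, is that $L_q(\sN;T\Sigma)$ has property $(\alpha)$. Hence every bounded $H^\infty$-calculus on it is $\cR$-bounded with the same angle \cite[Theorem~4.5.6(iii)]{PrSi16}. You need exactly this in Step~1, because \cite{SSW25} is quoted only for a bounded calculus of $\lambda+\cA_0$. For $\cA_1$ you need nothing beyond boundedness, since the sum theorem requires $\cR$-boundedness of one summand only.

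Finally, the embedding you flag as delicate in Step~4 is immediate. From $D(\cA)\hookrightarrow H^2_q$, interpolation gives $[L_q,D(\cA)]_\alpha\hookrightarrow H^{2\alpha}_q$, with no need to characterize the interpolation space with boundary conditions.
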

\begin{proof}
We define the operator $ \cA_0:=- \Delta_\Sigma$ in $L_q(\Sigma;T\Sigma)$ with domain $D(\cA_0)=H_q^2(\Sigma;T\Sigma)$.
It follows from \cite[Proposition 3.2]{SSW25} that
 there exists $\lambda_0\ge 0$ such that for $\lambda>\lambda_0$,
$\lambda + \cA_0$ has a bounded $H^\infty$-calculus with $H^\infty$-angle $<\pi/2$ (note that in our case $\partial\Sigma=\emptyset$, hence $\mathcal{K}_1=\emptyset$ in \cite[Proof of Proposition 3.2]{SSW25}).
This property remains true for the canonical extension of $\cA_0$ to the base space $X:=L_q(\Sigma\times (-h,0);T\Sigma)$ with domain
$$
D(\cA_0)=L_q((-h,0); H^2_q(\Sigma; T\Sigma))=H_q^2(\Sigma;L_q((-h,0);T\Sigma)).
$$
Next, we define  $cA_1:= - \partial_r^2$ in $L_q(\Sigma\times (-h,0);T\Sigma)$ with domain
\begin{align*}
D(\cA_1)=&\Big\{u\in L_q(\Sigma;H_q^2((-h,0);T\Sigma))= H_q^2((-h,0); L_q(\Sigma; T\Sigma))   : \\
&\quad  u=0 \text{ on } \Sigma_b, \, \, \partial_r u=0 \text{ on }\Sigma_u \Big\}.
\end{align*}
Then $\cA_1$ has a bounded $H^\infty$-calculus with $H^\infty$-angle 0, as we will show below.

{Next, we note that for each $q\in (1,\infty)$, the space $X=L_q(\Sigma\times (-h,0);T\Sigma)$ has property $(\alpha)$, see e.g. \cite[Definition 7.5.1]{HvNVW17} for a definition. This can be proven by literally following the lines of the proof of \cite[Proposition 7.5.3]{HvNVW17}, taking into account that for each Hilbert space $H$, the $\alpha$-bound $\alpha_H$ equals 1, see \cite[Example 7.5.2]{HvNVW17}.}

Based on this fact, it then follows from \cite[Theorem 4.5.6 (iii)]{PrSi16} that the $H^\infty$-calculi of $\cA_0,\cA_1$ are even $\mathcal{R}$-bounded in $X$ with the same angles.
Consequently, \cite[Corollary 4.5.11]{PrSi16} implies that the sum $\cA_0+\cA_1$ with natural domain
\begin{align*}
D(\cA_0+\cA_1)&=D(\cA_0)\cap D(\cA_1)\\
&=H_q^2(\Sigma;L_q((-h,0);T\Sigma))\cap \{u\in H_q^2((-h,0); L_q(\Sigma; T\Sigma)): \\
& \qquad \qquad  u=0 \text{ on } \Sigma_b, \, \, \partial_r u=0 \text{ on }\Sigma_u\}
\end{align*}
admits a bounded $H^\infty$-calculus in $X$ with $H^\infty$-angle $<\pi/2$.
It then follows from \cite[Theorem~VII.4.6.2]{AmannBook19} that
$$D(\cA_0+\cA_1)=\left\{u\in H_q^2(\Sigma\times (-h,0);T\Sigma): \,\, u=0 \text{ on } \Sigma_b, \, \, \partial_r u=0 \text{ on }\Sigma_u   \right\}.$$
For the proof of the $H^\infty$-calculus of $\cA_1=- \partial_r^2$ in $L_q(\Sigma\times (-h,0);T\Sigma)$, we observe that, by means of local coordinates with respect to $\Sigma$, it suffices to prove that $-\partial_r^2$ admits a bounded $H^\infty$-calculus in $L_q((-h,0);\mathbb{R}^n)$. Indeed, the operator $\cA_1$ does not depend on $p\in\Sigma$ and therefore, $p\in\Sigma$ serves at most as a parameter.

In what follows, we use the transformation $[-h,0]\ni r\to z:=r+h\in [0,h]$. We first show that for any given $\lambda\in\Sigma_\pi$ and $f\in L_q((0,h);\mathbb{R}^n)$, the boundary value problem
\begin{equation}
\label{eq:ODE1}
\left\{\begin{aligned}
\lambda u - u''  &=f &&\text{in}&&[0,h] ,\\
u(0) & =0   , \\
u'(h) &=0   ,
\end{aligned}\right.
\end{equation}
has a unique solution $u\in H_q^2((0,h);\mathbb{R}^n)$. To see this, we define $\tilde{f}(z)=f(z)$ if $z\in [0,h]$ and $\tilde{f}(z)=0$ otherwise. Then we solve the problem
\begin{equation*}
\left\{\begin{aligned}
\lambda v - v''  &=\tilde{f} &&\text{in}&&\mathbb{R}_+,\\
v(0) & =0   ,
\end{aligned}\right.
\end{equation*}
to obtain a unique solution $v\in H_q^2(\mathbb{R}_+;\mathbb{R}^n)$ by \cite[Chapter 6]{PrSi16}, since $\tilde{f}\in L_q(\mathbb{R}_+;\mathbb{R}^n)$. Next, we consider the inhomogeneous boundary value problem
\begin{equation*}
\left\{\begin{aligned}
\lambda w - w''  &=0 &&\text{in}&&[0,h] ,\\
w(0) & =0   , \\
w'(h) &=g   ,
\end{aligned}\right.
\end{equation*}
for given $g\in \mathbb{R}^n$. This $\mathbb{R}^n$-valued ODE has the unique solution
$$w(z)=c_1e^{-\sqrt{\lambda}z}+c_2e^{\sqrt{\lambda}z}, $$
where $c_k\in \mathbb{R}^n$ are uniquely determined by the boundary conditions as long as $\lambda\in\Sigma_\pi$ (so that $\sqrt{\lambda}\in\Sigma_{\pi/2}$). Therefore, with $g=v'(h)$, the unique solution $u$ of \eqref{eq:ODE1} is given by
$$u=v|_{[0,h]}-w$$
and obviously, it holds that $u\in H_q^2((0,h);\mathbb{R}^n)$. In particular, we have proven that each $\lambda\in\Sigma_\pi$ belongs to the resolvent set of the operator $-\partial_z^2$ in $L_q((0,h);\mathbb{R}^n)$.

For what follows, let $U_0:=[0,5h/8)$ and $U_1:=(3h/8,h]$. Then $[0,h]=U_0\cup U_1$ and there exists a partition of unity $\{\xi_0,\xi_1\}$
subordinate to $U_0\cup U_1$
such that ${\rm supp}\ \xi_j\subset U_j$ and $\xi_0^2+\xi_1^2=1$. Let us define a smooth diffeomorphism $\varphi:(-\infty,h]\to \mathbb{R}_+$ by $\varphi(x)=-(x-h)$. Furthermore, for any $s\ge 0$, we set
$$\mathfrak{R}^cu:=(\xi_0u,\varphi_*(\xi_1u)),\quad u\in H_q^s((0,h);\mathbb{R}^n)$$
and
$$\mathfrak{R}(v_0,v_1):=\xi_0v_0+\xi_1\varphi^*v_1,\quad v_0,v_1\in H_q^s(\mathbb{R}_+;\mathbb{R}^n).$$
It is the straightforward to check that
$$\mathfrak{R}^c\in\mathcal{L}(H_q^s((0,h);\mathbb{R}^n),H_q^s(\mathbb{R}_+;\mathbb{R}^n)\times H_q^s(\mathbb{R}_+;\mathbb{R}^n))$$
and
$$\mathfrak{R}\in\mathcal{L}(H_q^s(\mathbb{R}_+;\mathbb{R}^n)\times H_q^s(\mathbb{R}_+;\mathbb{R}^n),H_q^s((0,h);\mathbb{R}^n)).$$
We use the identity $u=\mathfrak{R}\circ\mathfrak{R}^c u=\mathfrak{R}(v_0,v_1)$, where $v_j$ solves the problem
\begin{equation}
\label{eq:ODE2}
\left\{\begin{aligned}
\lambda v_j - v_j''  &=f_j+B_ju &&\text{in}&&\mathbb{R}_+ ,\\
v_0(0) & =0   , \\
v_1'(0) &=0   ,
\end{aligned}\right.
\end{equation}
with $(f_0,f_1)=\mathfrak{R}^c f$ and
$$B_j\in\mathcal{L}(H_q^1((0,h);\mathbb{R}^n),L_q(\mathbb{R}_+;\mathbb{R}^n)).$$
Let us denote by $L_j$ the negative 1D Laplacian with either Dirichlet BCs ($j=0$) or Neumann BCs ($j=1$) on $\mathbb{R}_+$. Since the operators $L_j$ admit a bounded $H^\infty$-calculus in $L_q(\mathbb{R}_+;\mathbb{R}^n)$ with $H^\infty$-angle 0, they are in particular sectorial with the same angle. Therefore, the identity
\begin{equation}
\label{eq:ODE3}
u=(\lambda+\cA_1)^{-1} f=\xi_0(\lambda+L_0)^{-1} (f_0+B_0 u)+\xi_1\varphi^*(\lambda+L_1)^{-1} (f_1+B_1 u)
\end{equation}
and perturbation theory imply that there exist $\lambda_0>0$ and $C>0$ such that for all $\lambda\in \lambda_0+\Sigma_\pi$ it holds that
\begin{equation}
\label{eq:ODE4}
|\lambda|\|u\|_{L_q((0,h);\mathbb{R}^n)}+\|u\|_{H_q^2((0,h);\mathbb{R}^n)}\le C\|f\|_{L_q((0,h);\mathbb{R}^n)}.
\end{equation}
We use \eqref{eq:ODE3} once more to obtain the result
\begin{equation*}
u=(\lambda+\cA_1)^{-1} f=\xi_0(\lambda+L_0)^{-1} f_0+\xi_1\varphi^*(\lambda+L_1)^{-1} f_1+R(\lambda)f,
\end{equation*}
where
$$R(\lambda)f:=\xi_0(\lambda+L_0)^{-1} B_0 u+\xi_1\varphi^*(\lambda+L_1)^{-1} B_1 u.$$
By \eqref{eq:ODE4} and since $L_j$ are sectorial, there exists $C>0$ such that for all $\lambda\in \lambda_0+\Sigma_\pi$ it holds that
\begin{equation*}
\|R(\lambda)f\|_{L_q((0,h);\mathbb{R}^n)}\le \frac{C}{|\lambda|^{3/2}}\|f\|_{L_q((0,h);\mathbb{R}^n)}.
\end{equation*}
Now the $H^\infty$-calculus of $\cA_1$ can be proven in the same way as in \cite[Proof of Proposition 3.2]{SSW25}.

\medskip\noindent
We have shown that $\lambda + \cA_0+\cA_1$ admits a bounded $H^\infty$-calculus for $\lambda>\lambda_0$.
The assertion for
$$\lambda +\cA= \lambda +\cA_0 +\cA_1 +\cQ$$
then follows from a perturbation result, see Proposition 3.3.14 and Corollary 3.3.15 in \cite{PrSi16}.

\end{proof}
Next we consider the  resolvent problem
\begin{equation*}
(\lambda +\cA) v  =f ,
\end{equation*}
which can be recast equivalently as
\begin{equation}
\label{resolvent-cA-explicit}
\left\{\begin{aligned}
\lambda v - \Delta_\sN   v +\Ric \,  v + B_H v  &=f &&\text{in}&&\sN ,\\
\partial_r v  & =0   &&\text{on}&&\Sigma_u,  \\
v &=0&& \text{on}&&\Sigma_b.
\end{aligned}\right.
\end{equation}
The resolvent $(\lambda +\cA)^{-1}$ enjoys the following surprising properties.
\goodbreak
\begin{proposition}
\label{pro: properties of lambda-A}
For all $\lambda>\lambda_0$,
\begin{itemize}
\vspace{1mm}
\item[{\rm (a)}]
$(\lambda +\cA)^{-1} L_{q,\overline{\sigma}}(\sN;T\Sigma)\subset D(\sA)$.
\vspace{2mm}
\item[{\rm (b)}]
$(\lambda  + \cA)D(\sA) \subset  L_{q,\overline{\sigma}}(\sN;T\Sigma)$.
\end{itemize}
\end{proposition}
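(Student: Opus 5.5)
The plan is to take the vertical average of the resolvent equation \eqref{resolvent-cA-explicit} and show that the lower-order term $B_H$ is exactly what makes the averaged equation commute with $\wdv$. Let $v\in D(\cA)$ and $f:=(\lambda+\cA)v$. Averaging term by term:
\begin{itemize}
\item $\overline{\Delta_\Sigma v}=\Delta_\Sigma\bar v$ and $\overline{\Ric\, v}={\rm Ric}_\Sigma\bar v$, since both operators act only in the $\Sigma$-variables and ${\rm Ric}$ does not depend on $r$.
\item Using $\partial_r v=0$ on $\Sigma_u$, we get $\overline{\partial_r^2 v}=\frac1h\bigl(\partial_r v|_{\Sigma_u}-\partial_r v|_{\Sigma_b}\bigr)=-\frac1h\,\partial_r v|_{\Sigma_b}$.
\item $B_H v$ is independent of $r$, so $\overline{B_H v}=B_H v$.
\end{itemize}
Adding these up, the non-solenoidal parts of the boundary flux cancel:
\begin{equation*}
\lambda\bar v-\Delta_\Sigma\bar v+{\rm Ric}_\Sigma\bar v+\tfrac1h\,\PH\bigl(\partial_r v|_{\Sigma_b}\bigr)=\bar f \quad\text{on }\Sigma .
\end{equation*}
This averaged identity is the heart of the argument. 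The remaining task is to show that $\wdv$ applied to it gives a scalar resolvent equation for $\wdv\bar v$.

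Next I use the Weitzenböck identity on $(\Sigma,g)$. The operator $-\Delta_\Sigma+{\rm Ric}_\Sigma$ is the Hodge Laplacian $\Delta_H$ on vector fields (1-forms), and it intertwines with the divergence: $\wdv(\Delta_H u)=-\Delta_\Sigma\,\wdv u$, where the right-hand side uses the scalar Laplace--Beltrami operator. This is where I expect the main obstacle, for two reasons. The sign and normalisation conventions must be matched exactly, since this is precisely why ${\rm Ric}$ enters with a plus sign in $\cA$. The identity must also hold at the regularity available ($\bar v\in H^2_q$, $\bar f\in L_q$) and in the weak form of Proposition~\ref{pro: Helmholtz-Sigma}(b) and Proposition~\ref{pro: properties-Ph}(c). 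I would handle both in dual form. For smooth $\phi$, the identity $\Delta_H\wgd\phi=\wgd(-\Delta_\Sigma\phi)$ together with the self-adjointness of $\Delta_H$ gives
\begin{equation*}
(\Delta_H\bar v,\wgd\phi)_\Sigma=(\bar v,\wgd(-\Delta_\Sigma\phi))_\Sigma .
\end{equation*}
This extends to all $\bar v\in H^2_q$ by density. In addition, $(\PH g,\wgd\phi)_\Sigma=0$ by Proposition~\ref{pro: Helmholtz-Sigma}. Testing the averaged identity with $\wgd\phi$ therefore yields, for all smooth $\phi$,
\begin{equation*}
(\bar f,\wgd\phi)_\Sigma=\bigl(\bar v,\wgd(\lambda\phi-\Delta_\Sigma\phi)\bigr)_\Sigma .
\end{equation*}

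Part (b) now follows directly. If $v\in D(\sA)$, then $\wdv\bar v=0$ by Proposition~\ref{pro: properties-Ph}(c), so the right-hand side vanishes for every smooth $\phi$. By density this gives $(\bar f,\wgd\phi)_\Sigma=0$ for all $\phi\in\dot H^1_{q'}(\Sigma)$, and hence $f\in L_{q,\bar\sigma}$.

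For part (a), let $f\in L_{q,\bar\sigma}$ and $v=(\lambda+\cA)^{-1}f\in D(\cA)$; this exists by Proposition~\ref{prop: H calculus of A}. The left-hand side of the dual identity vanishes, so $\psi:=\wdv\bar v\in H^1_q(\Sigma)$ satisfies $\int_\Sigma\psi\,(\lambda-\Delta_\Sigma)\phi\,d\mu_g=0$ for all smooth $\phi$. Since $\Sigma$ is closed and $\lambda>0$, the operator $\lambda-\Delta_\Sigma$ is bijective on $C^\infty(\Sigma)$. Hence $\psi$ annihilates all smooth functions, and so $\psi=0$. Consequently $v\in H^2_q$ satisfies the boundary conditions and $\wdv\bar v=0$, which by Proposition~\ref{pro: properties-Ph}(c) means $v\in D(\sA)$.
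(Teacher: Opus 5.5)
Your proposal is correct and follows essentially the paper's route. You average vertically to obtain $(\lambda-\Delta_\Sigma+\Ric)\bar v+\frac{1}{h}\PH(\partial_r v|_{\Sigma_b})=\bar f$, then use the Weitzenb\"ock-type commutation of $-\Delta_\Sigma+\Ric$ with $\wgd$ and $\wdv$ to reduce everything to the scalar operator $\lambda-\Delta_B$ acting on $\wdv\bar v$; your scalar ``$\Delta_\Sigma$'' is the paper's $\Delta_B$.

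The differences are only in execution. In (a), the paper tests with the single $\phi$ solving $\Delta_B\phi=\lambda\wdv\bar v$ and obtains $\|\wgd\phi\|_{L_2(\Sigma)}^2=-\lambda\|\wdv\bar v\|_{L_2(\Sigma)}^2$, whereas you test with all smooth $\phi$ and use bijectivity of $\lambda-\Delta_B$ on $C^\infty(\Sigma)$. In (b), you use the dual, gradient form of the commutator identity instead of the paper's divergence identity, which handles the weak formulation of $\wdv\bar f=0$ somewhat more directly.
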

\begin{proof}
(a)
Given  $f\in L_{q,\overline{\sigma}}(\sN;T\Sigma)$,
the resolvent problem~\eqref{resolvent-cA-explicit} admits a unique solution $v\in D(\cA)$ by Proposition \ref{prop: H calculus of A}. It remains to show that $\wdv  \bar{v}=0$ in view of
Proposition~\ref{pro: properties-Ph}(c).
Taking the average in the vertical direction of \eqref{resolvent-cA-explicit}$_1$
and using the fact that $\Delta_\sN = \Delta_\Sigma  + \partial^2_r$
yields
\begin{equation}
\label{averagee resolvent problem}
(\lambda - \wDel + \Ric ) \bar{v}  + \frac{1}{h} \PH  (\partial_r v |_{\Sigma_b}) = \bar{f}.
\end{equation}
We consider the elliptic problem
\begin{equation*}
\Delta_B \phi = \lambda\, \wdv \bar{v}\quad\text{on } \Sigma,
\end{equation*}
where $\Delta_B := \wdv \,  \wgd$ is the Laplace-Beltrami operator on $\Sigma$.
Direct computations, based on  \eqref{div-scalar-Sigma} and \eqref{Green-5}, yield
\begin{equation}
\label{commutator-Ricci}
\begin{split}
\| \gd_\Sigma \phi \|_{L_2(\Sigma)}^2 & = (- { \Delta_B} \phi , \phi )_{\Sigma}= -\lambda (\wdv \bar{v}  , \phi )_{\Sigma} = \lambda ( \bar{v}  , \wgd \phi)_{\Sigma} \\
& = (\wDel \bar{v}   , \wgd \phi)_{\Sigma} - (\Ric \bar{v}  , \wgd \phi)_{\Sigma} \\
& \quad -  h^{-1} ( { \PH (\partial_r } v|_{\Sigma_b}) , \wgd \phi)_{\Sigma} + ( \bar{f} , \wgd \phi)_{\Sigma}\\
& = (\wDel \bar{v}   , \wgd \phi)_{\Sigma} - (\Ric \bar{v}  , \wgd \phi)_{\Sigma}  \\
& = ( \bar{v}  , \wDel \wgd \phi)_{\Sigma} - (\Ric \bar{v}  , \wgd \phi)_{\Sigma} \\
& =( \bar{v}   , \wgd \Delta_B  \phi)_{\Sigma} =- (\wdv  \bar{v}  , \Delta_B \phi )_{\Sigma}= -\lambda \| \wdv \bar{v} \|_{L_2(\Sigma)}^2.
\end{split}
\end{equation}
In \eqref{commutator-Ricci}$_4$, we have used the properties of $\PH$ and \eqref{div-scalar-Sigma}.
In \eqref{commutator-Ricci}$_5$, we have applied  integration by parts twice, see~\eqref{Green-5}.
Finally, in \eqref{commutator-Ricci}$_6$,  we have used the fact that
$$
\wDel \wgd \phi = \wgd \Delta_B  \phi +    \Ric \, \wgd  \phi,
$$
see \cite[Lemma~B.2]{SSW25}.
For $\lambda>0$, this implies $\wdv \bar{v}=0$.
Therefore,
$$v=(\lambda + \cA)^{-1}\in H^2_{q,\overline{\sigma}}(\sN;T\Sigma)\cap D(\cA)=D(\sA).$$

\medskip\noindent
(b)
Assume that $v\in D(\sA)$. Let $f= (\lambda +\cA) v \in L_q(\sN;T\Sigma)$. Applying $\wdv$ to both sides of \eqref{averagee resolvent problem}, we obtain
$$
\wdv \bar{f}= -\wdv(  (\wDel - \Ric )\bar{v}),
$$
where we have used Proposition~\ref{pro: properties-Ph}(c)  to conclude that $\wdv \bar v=0$.
The equality is understood in the weak sense as in Proposition ~\ref{pro: properties-Ph}(c).
Given any
$$u=u^k \partial_k\in C^\infty  (\Sigma;T\Sigma),$$
employing the relation   $g^{ij}_{|k}=0$ for all $  i,j,k \in \{1, 2\}$, one obtains
\begin{equation}
\label{commutator-divergence}
\begin{split}
 \wdv \wDel u & = (g^{ij} u^k_{| i\, | j})_{|k} =  g^{ij} u^k_{| i\, | j \, | k}\\
 &= g^{ij} u^k_{| i\, | k \, | j} + g^{ij}( u^k_{| i\, | j \, | k} -  u^k_{| i\, | k \, | j})\\
& = g^{ij} u^k_{| i\, | k \, | j} + g^{ij} (u^m_{|i} (\wRic)_{jm} - u^k_{|m} R^m_{kji} ) \\
&=  g^{ij} u^k_{| k\, | i \, | j} +  g^{ij} (u^k_{| i\, | k \, | j} -   u^k_{| k\, | i \, | j})  \\
&= \Delta_B \wdv u + g^{ij} ((\wRic)_{im} u^m )_{|j}  \\
&= \Delta_B \wdv u +  ((\Ric)_m^j u^m )_{|j}  \\
&= \Delta_B \wdv u +  \wdv(\Ric \, u  )  .
\end{split}
\end{equation}
%
%
In \eqref{commutator-divergence}$_3$, we have applied the Ricci identity, cf. \cite[formula (A.3)]{SaTu20}.
Additionally,  we used the relation
$$
g^{ij} R^m_{kji} = R^m_k = g^{ml} \wRic_{kl}=  g^{ml} \wRic_{lk},
$$
see for instance \cite[formula (A.1)]{SaTu20}, to further conclude that
 $ g^{ij} (u^m_{|i} (\wRic)_{jm} - u^k_{|m} R^m_{kji} )=0.$
We refer to \eqref{covariant-local} for the notation $u^k_{|\,j}$.
A standard density argument yields
\begin{align*}
\wdv \bar f= -\wdv(  \wDel - \Ric )\bar{v} = -\Delta_B \wdv \bar{v}  =0.
\end{align*}
This implies that $f\in L_{q,\overline{\sigma}}(\sN;T\Sigma)$.
\end{proof}
\noindent
Proposition~\ref{pro: properties of lambda-A} shows  that  for $\lambda>\lambda_0$,
$$
(\lambda + \cA)|_{D(\sA)} \in \Lis(D(\sA), L_{q,\overline{\sigma}} {(\sN;T\Sigma))}.
$$
Thus, given any $v\in D(\sA)$,  Proposition~\ref{pro: properties of lambda-A}(b) yields
$$
(\lambda -\Delta_\sN  + \Ric )   v +B_H v \in L_{q,\overline{\sigma}}(\sN;T\Sigma)).
$$
Therefore, if $v\in D(\sA)$,
\begin{align*}
(\lambda -\Delta_\sN  + \Ric )   v +B_H v = \Ph (\lambda -\Delta_\sN  + \Ric )   v + \Ph B_H v = \lambda v -\Ph(\Delta_\sN - \Ric ) v .
\end{align*}
This implies by  \eqref{def: hydrostatic-Stokes} and \eqref{def: cal-A}
\begin{equation}
\label{inclusion}
\lambda + \sA    = (\lambda +\cA-  2 \Ph \Ric)|_{D(\sA)}  .
\end{equation}
We are now ready to state the main result of this section. The assertion parallels a corresponding result in
 \cite[Theorem 3.1]{GGHHK17}, where the simpler setting of flat Euclidean space is considered.
\begin{theorem}
\label{thm: H-infinity calculus}
There exists  $\lambda_0\geq 0$ such that for  all $\lambda>\lambda_0$,
the operator  $\lambda + \sA$ admits a bounded $H^\infty$-calculus on $L_{q,\bar\sigma}(\sN, T\Sigma)$
 with $H^\infty$-angle $\phi^\infty_{\sA}<\pi/2$.
\end{theorem}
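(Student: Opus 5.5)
The approach is to transfer the bounded $H^\infty$-calculus of $\lambda+\cA$ from Proposition~\ref{prop: H calculus of A} to the part of $\lambda+\cA$ in $L_{q,\bar\sigma}(\sN;T\Sigma)$, and then absorb the bounded perturbation $-2\Ph\Ric$ appearing in \eqref{inclusion}.

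\emph{Step 1: the part of $\cA$ in the subspace.} Let $\cA_\sigma$ denote the part of $\cA$ in $X_\sigma:=L_{q,\bar\sigma}(\sN;T\Sigma)$, i.e.\ $D(\cA_\sigma)=\{v\in D(\cA): \cA v\in X_\sigma\}$. By Proposition~\ref{pro: properties of lambda-A}(a),(b) we have $D(\cA_\sigma)=D(\sA)$. Moreover, for real $\lambda>\lambda_0$ the resolvent $(\lambda+\cA)^{-1}$ leaves $X_\sigma$ invariant.

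We want this invariance for all $\lambda$ in a sector $-\lambda_0+\Sigma_{\pi-\phi}$, not only for real $\lambda$. Since $X_\sigma$ is closed, this follows from the identity theorem for holomorphic functions. Indeed, the map
$$\lambda\mapsto (I-\Ph)(\lambda+\cA)^{-1}f, \qquad f\in X_\sigma,$$
is holomorphic on the connected resolvent set of $-\cA$ containing the sector, and it vanishes on the real half-line $(\lambda_0,\infty)$. Hence it vanishes identically.

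Because $\Ph$ is a bounded projection onto $X_\sigma$ commuting with all resolvents of $\cA$ on $X_\sigma$, $X_\sigma$ is complemented and invariant. Consequently, for $f\in H^\infty_0$ the Dunford integral
$$f(\lambda+\cA_\sigma)=\frac{1}{2\pi i}\int_\Gamma f(z)\,(z-\lambda-\cA)^{-1}\,dz$$
restricts to $X_\sigma$, and its norm is bounded by $\|f(\lambda+\cA)\|_{\cL(L_q)}\le C\|f\|_\infty$. Therefore $\lambda+\cA_\sigma$ has a bounded $H^\infty$-calculus on $X_\sigma$ with angle $<\pi/2$. A sketch of why the restriction is legitimate: $\lambda+\cA_\sigma$ is sectorial on $X_\sigma$ with the same angle, since its resolvent is the restriction of the resolvent of $\lambda+\cA$. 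It also has dense domain, because $D(\sA)\supset \Ph(C_c^\infty)$-type fields, or more directly because $\Ph D(\cA)\subset D(\sA)$ via the resolvent identity.

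\emph{Step 2: perturbation.} By \eqref{inclusion}, $\lambda+\sA=\lambda+\cA_\sigma-2\Ph\Ric$ on $D(\sA)$, where $2\Ph\Ric\in\cL(X_\sigma)$ by the boundedness of $\Ric$ (a smooth tensor) and of $\Ph$ (Proposition~\ref{pro: properties-Ph}(a)). Bounded perturbations preserve the bounded $H^\infty$-calculus after a shift. More precisely, $\Ph\Ric$ is $\cA_\sigma$-bounded of lower order, e.g. it maps $X_\sigma$ into itself and satisfies
$$\|\Ph\Ric\,(\mu+\cA_\sigma)^{-\alpha}\|\le C$$
trivially for $\alpha=0$. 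Applying Proposition~3.3.14 and Corollary~3.3.15 in \cite{PrSi16} (as at the end of the proof of Proposition~\ref{prop: H calculus of A}), after enlarging $\lambda_0$, we conclude that $\lambda+\sA$ admits a bounded $H^\infty$-calculus on $X_\sigma$ for $\lambda>\lambda_0$ with angle $\phi^\infty_\sA<\pi/2$.

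\emph{Main obstacle.} I expect Step~1 to be the delicate point, namely ensuring that $X_\sigma$ is invariant under $(\mu+\cA)^{-1}$ for complex $\mu$ in the whole sector. Proposition~\ref{pro: properties of lambda-A} only treats real $\lambda>\lambda_0$, so the holomorphy argument above is essential. Alternatively, one can redo the energy identity \eqref{commutator-Ricci} with complex $\lambda$, taking real parts; I would try the holomorphy argument first, as it is cleaner.
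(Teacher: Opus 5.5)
Your proposal follows the paper's route and spells out the step it leaves implicit. The paper's proof combines Proposition~\ref{prop: H calculus of A}, the identity \eqref{inclusion} and \cite[Proposition~3.3.14 and Corollary~3.3.15]{PrSi16}, tacitly using that the part of $\lambda+\cA$ in $L_{q,\bar\sigma}(\sN;T\Sigma)$ inherits the bounded $H^\infty$-calculus. Your Step~1 supplies exactly that justification: invariance for complex $\mu$ via the identity theorem, then restriction of the Dunford integrals.

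Two side claims in Step~1 are false and should be replaced, although neither affects the conclusion.

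First, neither $\Ph D(\cA)$ nor $\Ph(C_c^\infty)$ is contained in $D(\sA)$. If $v=0$ on $\Sigma_b$, then $\Ph v=v-\gd_\Sigma\psi_{\bar v}$ has trace $-\gd_\Sigma\psi_{\bar v}$ on $\Sigma_b$, which is nonzero unless $\wdv\bar v=0$. Density of $D(\sA)$ in $L_{q,\bar\sigma}$ follows instead from $\mu(\mu+\cA)^{-1}f\to f$ as $\mu\to\infty$. Indeed, $\mu(\mu+\cA)^{-1}f\in D(\sA)$ for $f\in L_{q,\bar\sigma}$ and real $\mu>\lambda_0$ by Proposition~\ref{pro: properties of lambda-A}(a).

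Second, $\Ph$ does not commute with $(\mu+\cA)^{-1}$ on all of $L_q$, because $\ker\Ph$ is not invariant. The function $(\mu+\cA)^{-1}\gd_\Sigma\phi$ vanishes on $\Sigma_b$, so it can be an $r$-independent gradient only if it is zero. This is harmless: restricting an $H^\infty$-calculus to the part of an operator in a closed invariant subspace requires only invariance of that subspace, not an invariant complement or a commuting projection.

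Finally, the part of $\cA$ should be defined on $\{v\in D(\cA)\cap L_{q,\bar\sigma}(\sN;T\Sigma):\ \cA v\in L_{q,\bar\sigma}(\sN;T\Sigma)\}$; you omitted the condition $v\in L_{q,\bar\sigma}$.
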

\begin{proof}
The assertion follows from Proposition~\ref{prop: H calculus of A}, \eqref{inclusion} and \cite[Proposition 3.3.14 and Corollary 3.3.15]{PrSi16}.
\end{proof}

We will also provide a result for the \emph{hydrostatic Stokes operator} $\sA$ in higher order Bessel potential spaces. We start with the following
result,  which is interesting in its own right.
\begin{lemma}\label{lem:highOrd}
Let $q\in (1,\infty)$ and $s\ge 0$. Then there exists $\lambda_0>0$ such that for all $\lambda\ge \lambda_0$ it holds that
$$(\lambda-\Delta_{\sN})^{-1}:H^s_{q} (\sN;T\Sigma)\to \{v  \in H^{2+s}_{q} (\sN;T\Sigma) : \, v=0 \text{ on }\Sigma_b, \ \ \partial_r v=0 \text{ on }\Sigma_u\}.$$
\end{lemma}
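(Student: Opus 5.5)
The plan is to treat $\lambda-\Delta_\sN$ with the mixed Dirichlet/Neumann conditions as a strongly elliptic boundary value problem on the compact manifold with boundary $\sN$, and to bootstrap regularity tangentially and then normally. The starting point is the case $s=0$, which is already contained in the proof of Proposition~\ref{prop: H calculus of A}: $\cA_0+\cA_1=-\Delta_\sN$ with domain $\{v\in H^2_q(\sN;T\Sigma): v=0 \text{ on }\Sigma_b,\ \partial_r v=0\text{ on }\Sigma_u\}$ is invertible for $\lambda>\lambda_0$, with $\|v\|_{H^2_q}\le C\|f\|_{L_q}$. Here $v:=(\lambda-\Delta_\sN)^{-1}f$. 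I will first prove the statement for integers $s=k\in\bN$ by induction, and then obtain general $s\ge0$ by complex interpolation between $k$ and $k+1$. The boundary conditions are preserved under interpolation because the target spaces are retracts of the $H^{2+s}_q$ scale, for example via an extension/trace argument as in \cite[Theorem~VII.4.6.2]{AmannBook19}. The constant $\lambda_0$ can be chosen independently of $s$.

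For the inductive step, assume the claim holds for $k$ and let $f\in H^{k+1}_q$. Since $\sN=\Sigma\times[-h,0]$ is a product and $\Delta_\sN=\Delta_\Sigma+\partial_r^2$, tangential and normal derivatives decouple as follows.

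\emph{Tangential regularity.} The key point is that $\cA_0=-\Delta_\Sigma$ commutes with $\cA_1=-\partial_r^2$ and acts only in the $\Sigma$-variable. Their resolvents therefore commute, so $(\lambda-\Delta_\sN)^{-1}$ commutes with $(\mu-\Delta_\Sigma)^{-1/2}$ for large $\mu$, or alternatively with tangential difference quotients along a finite family of smooth vector fields $X_j$ on $\Sigma$ (lifted constantly in $r$) spanning $T\Sigma$. Applying $\nabla_{X_j}$ to the equation gives $(\lambda-\Delta_\sN)\nabla_{X_j}v=\nabla_{X_j}f+[\nabla_{X_j},\Delta_\Sigma]v$. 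The commutator is a second-order tangential operator with smooth coefficients; by the curvature identities it is in fact of order two only in $\Sigma$ with lower-order curvature terms. Since $X_j$ is tangential to $\Sigma_b,\Sigma_u$, the boundary conditions are preserved. I expect the cleanest route is to use $H^{k+1}_q(\Sigma;\cdot)=D((\mu+\cA_0)^{(k+1)/2})$, via the bounded $H^\infty$-calculus of $\cA_0$ from \cite{SSW25} (giving bounded imaginary powers and hence the complex interpolation identification), together with commutation of resolvents. This yields $v\in H^{k+1}_q(\Sigma;H^2_q(-h,0))$-type regularity without computing any commutators.

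\emph{Normal regularity.} From the equation, $\partial_r^2 v=\lambda v-\Delta_\Sigma v-f$. Given tangential regularity of order $k+3$ and normal regularity from the induction hypothesis, this identity trades two $r$-derivatives for two tangential ones. Iterating gives all mixed derivatives $\partial_r^{j}\nabla^{m}v$ with $j+m\le k+3$ in $L_q$. Using the intersection representation $H^{k+3}_q(\Sigma\times(-h,0))=L_q((-h,0);H^{k+3}_q(\Sigma))\cap H^{k+3}_q((-h,0);L_q(\Sigma))$, again by \cite[Theorem~VII.4.6.2]{AmannBook19} or mixed-derivative theorems, this gives $v\in H^{k+3}_q$.

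I expect the main obstacle to be making the tangential step rigorous on a vector bundle, where commutators of covariant derivatives with $\Delta_\Sigma$ produce curvature terms. The functional-calculus route, using $(\mu+\cA_0)^{\alpha}$ commuting with $(\lambda+\cA_0+\cA_1)^{-1}$ plus the identification of domains of fractional powers with Bessel potential spaces on $\Sigma$, avoids this entirely. That is the route I would try first.
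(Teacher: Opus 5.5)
Your proposal is correct, but it takes a genuinely different route from the paper. The paper localizes on $\sN$. It splits an atlas into interior charts and charts meeting $\Sigma_b$ or $\Sigma_u$ and freezes coefficients. The model problem on $\bR^3$ is handled by the lifting property of $\lambda-\Delta_{\bR^3}$. The half-space problems on $\bR^3_+$ (Dirichlet or Neumann) are handled by extension--restriction plus the explicit solution operators $e^{-L_\lambda x_3}$, $L_\lambda=(\lambda-\Delta_{\bR^2})^{1/2}$. A partition of unity and perturbation then reassemble the result for all real $s\ge 0$ at once.

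You instead exploit the product structure. Since $\cA_0$ and $\cA_1$ are resolvent commuting, $B:=\mu+\cA_0$ commutes with $(\lambda+\cA_0+\cA_1)^{-1}$. So for $f\in H^k_q(\sN)$ you get $B^{k/2}v\in D(\cA_0)\cap D(\cA_1)\subset H^2_q(\sN)$, whence $\partial_r^j v\in L_q((-h,0);H^{k+2-j}_q(\Sigma;T\Sigma))$ for $j=0,1,2$. The identity $\partial_r^{j}v=\partial_r^{j-2}(\lambda v-\Delta_\Sigma v-f)$ then gives the same for all $j\le k+2$, so the induction on $k$ is not even needed. The boundary conditions are inherited from $D(\cA_1)$. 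For non-integer $s$, interpolation only requires the easy embedding of the interpolated target into $H^{2+s}_q$ with vanishing traces.

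Your route avoids boundary charts and half-space model problems entirely, which is cleaner here. The paper's localization, by contrast, does not depend on the product metric or on the $r$-independence of $\Delta_\Sigma$, so it would survive variable coefficients.

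One claim should be toned down. The identification $D(B^{\alpha})=L_q((-h,0);H^{2\alpha}_q(\Sigma;T\Sigma))$ does not follow from the $H^\infty$-calculus alone. Bounded imaginary powers only supply the interpolation step, while $D(B^m)=H^{2m}_q(\Sigma;T\Sigma)$ for integer $m$ is elliptic regularity for the connection Laplacian on the closed surface $\Sigma$. So the commutator/curvature bookkeeping is not avoided but moved into a boundaryless localization argument. This is the vector-field analogue of \eqref{eq:HO6}, which the paper also invokes in the proof of Lemma~\ref{lem:anisotropic space}.
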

\begin{proof}
For $\lambda>0$ and $f\in H^s_{q} (\sN;T\Sigma)$, we consider the equation
\begin{equation*}
\lambda u-\Delta_{\sN} u=f.
\end{equation*}
By Proposition \ref{prop: H calculus of A}, there exists a unique solution $u\in H^2_{q} (\sN;T\Sigma)$ satisfying the boundary conditions
\begin{equation}
\label{eq:HO2}
u=0 \text{ on }\Sigma_b, \ \ \partial_r u=0 \text{ on }\Sigma_u.
\end{equation}
We will prove that, in addition, $u\in H^{2+s}_{q} (\sN;T\Sigma)$. To this end, we apply a localization procedure as in \cite[Section 3.1]{SSW25}. In the sequel, we will  sketch  some of the details. We define an atlas $\{(U_k, \varphi_k)\}_{k\in \cK}$  of  $\sN$  with $\cK=\cK_0\sqcup \cK_1\sqcup \cK_2$ such that
$k\in \cK_0$ if $U_k\cap (\Sigma_u\cup \Sigma_b)=\emptyset$, $k\in \cK_1$ if $U_k\cap \Sigma_b \neq \emptyset$ and $k\in \cK_2$ if $U_k\cap \Sigma_u \neq \emptyset$. In local coordinates, equation \eqref{eq:HO2} then reads
\begin{equation*}
(\lambda - \bar{g}^{ij}_{(k)} \partial_i \partial_j ) \bar{u}_k     =\bar{f}_k + P_k  (u)  \quad \text{in } \bX_k ,
\end{equation*}
with $\bX_k=\bR^3$ if $k\in\cK_0$ and $\bX_k=\bR_+^3$ if $k\in \cK_1\sqcup \cK_2$. Here, $\bar{f}_k\in H_q^s(\bX_k;\bR^2)$, $\bar{G}_{(k)}:=[\bar{g}^{ij}_{(k)}]^i_j $ belong to  $BC^\infty(\bX_k; \bR^{3\times 3})$ and $\|\bar{G}_{(k)}-I_3\|_{\infty}$  can be made arbitrarily small. Furthermore,
$$
P_k\in \cL(H^1_q(\sM;T\Sigma), L_q(\bX_k;\bR^2)).
$$
In view of the boundary conditions on $\bR^2\simeq\partial\bR_+^3$, we have $\bar{u}_k=0$ on $\bR^2$, if $k\in \cK_1$ and $\partial_3\bar{u}_k=0$ on $\bR^2$, if $k\in \cK_2$.

Since $P_k$ is of lower order and $ \bar{g}^{ij}_{(k)} \partial_i \partial_j$ is a perturbation of the Laplacian $\Delta_{\bX_k}=\sum_{j=1}^3\partial_j^2$, it suffices to show that for given $\bar{f}_k\in H_q^s(\bX_k;\bR^2)$, the solution of the problem
\begin{equation*}
\left\{\begin{aligned}
(\lambda - \Delta_{\bX_k}) \bar{v}_k   &=\bar{f}_k &&\text{for}&&k\in \cK ,\\
 \bar{v}_k   &=0 &&\text{for}&&k\in\cK_1  , \\
\partial_3\bar{v}_k   &=0 &&\text{for}&&k\in\cK_2 ,
\end{aligned}\right.
\end{equation*}
satisfies $\bar{v}_k\in H_q^{2+s}(\bX_k;\bR^2)$.

For $k\in \cK_0$ (full space problem), this follows from the lifting property of the Bessel potential spaces, see e.g. \cite[Theorem 2.3.4]{Tri78}. For $k\in \cK_1\sqcup \cK_2$ one may apply semigroup theory. Indeed, by \cite[Lemma 2.9.3]{Tri78} there exists a linear and bounded extension operator $E:H_q^{s}(\bR_+^3;\bR^2)\to H_q^{s}(\bR^3;\bR^2)$ with corresponding retraction $R$ to $\bR_+^3$. It follows that
$$\bar{v}_k=R(\lambda-\Delta_{\bR^3})^{-1}E\bar{f}_k+\bar{w}_k,$$
where $\bar{w}_k$ solves
\begin{equation}
\label{eq:HO5}
\left\{\begin{aligned}
(\lambda - \Delta_{\bX_k}) \bar{w}_k   &=0 &&\text{for}&&k\in \cK ,\\
 \bar{w}_k   &=\bar{g}_k &&\text{for}&&k\in\cK_1  , \\
\partial_3\bar{w}_k   &=\bar{g}_k &&\text{for}&&k\in\cK_2 ,
\end{aligned}\right.
\end{equation}
and
\begin{align*}
\bar{g}_k :=
\begin{cases}
-[R(\lambda-\Delta_{\bR^3})^{-1}E\bar{f}_k]|_{\partial\bR_+^3}\in B_{qq}^{2+s-1/q}(\bR^2;\bR^2), \quad & \text{if } k\in \cK_1, \\
-[\partial_3R(\lambda-\Delta_{\bR^3})^{-1}E\bar{f}_k]|_{\partial\bR_+^3}\in B_{qq}^{1+s-1/q}(\bR^2;\bR^2), \quad & \text{if } k\in \cK_2 ,
\end{cases}
\end{align*}
since
$$R(\lambda-\Delta_{\bR^3})^{-1}E\bar{f}_k\in  H_q^{2+s}(\bR_+^3;\bR^2),$$
by the lifting property for the operator $\lambda-\Delta_{\bR^3}$. With the help of semigroup theory, the solution $\bar{w}_k$ of \eqref{eq:HO5} may be explicitly written as
\begin{align*}
\bar{w}_k=
\begin{cases}
e^{-L_\lambda x_3}\bar{g}_k, \quad & \text{if } k\in \cK_1, \\
-e^{-L_\lambda x_3}L_\lambda^{-1}\bar{g}_k, \quad & \text{if } k\in \cK_2 ,
\end{cases}
\end{align*}
where $L_\lambda:=(\lambda-\Delta_{\bR^2})^{1/2}$. The lifting property for the operator $L_\lambda$ yields $\bar{w}_k\in H_q^{2+s}(\bR_+^3;\bR^2)$.
Therefore, $\bar{v}_k$ enjoys the same regularity.

By perturbation theory and with the help of a partition of unity, one may then include the perturbed coefficients $ \bar{g}^{ij}_{(k)}$ as well as the lower order terms given by $P_k$ to obtain the desired result for $\Delta_\sN$.
\end{proof}

By the same strategy as in the proof of Lemma \ref{lem:highOrd}, one can prove that there exists $\lambda_0>0$ such that for all $\lambda\ge\lambda_0$ and all $s\ge 0$ it holds that
\begin{equation}
\label{eq:HO6}
(\lambda-\Delta_{B})^{-1}:H^s_{q} (\Sigma)\to  H^{2+s}_{q} (\Sigma),
\end{equation}
and
\begin{equation}
\label{eq:HO7}
\lambda\|u\|_{H_q^s(\Sigma)}+\|u\|_{H_q^{2+s}(\Sigma)}\lesssim \|f\|_{H_q^s(\Sigma)},
\end{equation}
where $u:=(\lambda-\Delta_{B})^{-1}f$ and $\Delta_B$ denotes the Laplace-Beltrami operator on $\Sigma$.

Indeed, the proof is much simpler, since $\partial\Sigma=\emptyset$ and thus, by using local coordinates, one can reduce the regularity problem to the Laplace operator in $H_q^s(\bR^2)$ combined with the lifting property, see \cite[Theorem 2.3.4]{Tri78}. We refrain from giving the details.

Making use of \eqref{eq:HO6} and \eqref{eq:HO7} one can improve the first part of Proposition \ref{pro: Helmholtz-Sigma} (a).
\begin{proposition}\label{pro:HP_improved}
For each $q\in (1,\infty)$ and all $s\ge 0$ it holds that
$$\PH\in \cL(H^s_q (\Sigma; T\Sigma) , H^s_{q, \sigma} (\Sigma ; T\Sigma)).$$
\end{proposition}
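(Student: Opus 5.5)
The plan is to represent $(I-\PH)u = \wgd \psi_u$ explicitly through the Laplace--Beltrami operator and then read off the regularity from \eqref{eq:HO6}--\eqref{eq:HO7}. Fix $\lambda\ge\lambda_0$ as in \eqref{eq:HO6}.

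\textbf{Step 1: the shifted potential.} For smooth $u\in C^\infty(\Sigma;T\Sigma)$, the weak Neumann problem defining $\psi_u$ says $\Delta_B\psi_u=\wdv u$ in the weak sense, with $\psi_u$ determined up to constants. Rather than inverting $\Delta_B$ on mean-zero functions directly, I will define
\[
\phi:=(\lambda-\Delta_B)^{-1}\wdv u, \qquad \text{so that} \qquad \Delta_B\phi=\lambda\phi-\wdv u .
\]
The relation with $\psi_u$ is then $\Delta_B(\psi_u+\phi)=\lambda\phi$. This leads to the identity
\[
\wgd\psi_u = -\wgd\phi + \lambda\,\wgd\,\Delta_B^{-1}\big(\phi-\textstyle\avint_\Sigma\phi\big),
\]
where $\Delta_B^{-1}$ denotes the inverse on mean-zero functions. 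This inverse exists because $\Sigma$ is closed and connected, and the mean of $\lambda\phi$ vanishes automatically since $\Delta_B(\psi_u+\phi)$ integrates to zero.

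\textbf{Step 2: regularity of the first term.} Assume $s\ge 1$; the case $s\in[0,2]$ is already covered by Proposition~\ref{pro: Helmholtz-Sigma}(a). Then $\wdv u\in H^{s-1}_q(\Sigma)$, so \eqref{eq:HO6}--\eqref{eq:HO7} give $\phi\in H^{s+1}_q(\Sigma)$ with $\|\phi\|_{H^{s+1}_q}\lesssim\|u\|_{H^s_q}$. Consequently $\wgd\phi\in H^s_q(\Sigma;T\Sigma)$, and the first term is controlled.

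\textbf{Step 3: regularity of the second term.} Write $\Delta_B^{-1}$ on mean-zero functions as $(\lambda-\Delta_B)^{-1}$ plus a correction. Alternatively, use the identity $\Delta_B^{-1}\chi=(\lambda-\Delta_B)^{-1}(\lambda\Delta_B^{-1}\chi-\chi)$ and bootstrap. Since $\phi-\avint\phi$ lies in $H^{s+1}_q$, the term $\Delta_B^{-1}(\phi-\avint\phi)$ is at least in $H^1_q$ by the $L_q$ theory. Each application of the identity together with \eqref{eq:HO6} gains two derivatives, until the regularity reaches $H^{s+3}_q$. Its gradient therefore lies in $H^s_q$ with norm bounded by $C\|u\|_{H^s_q}$.

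\textbf{Step 4: conclusion.} Steps 2 and 3 give $\|\PH u\|_{H^s_q}\le C\|u\|_{H^s_q}$ for smooth $u$. Density of smooth fields in $H^s_q$, combined with continuity of $\PH$ on $L_q$, extends the bound to all $u\in H^s_q$. The range lies in $L_{q,\sigma}$ by definition, so $\PH u\in H^s_{q,\sigma}$. Non-integer values of $s$ can alternatively be obtained by complex interpolation between integer values.

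\textbf{Main obstacle.} The hard part is handling the kernel (the constants) of $\Delta_B$, i.e.\ the bootstrap for $\Delta_B^{-1}$ on mean-zero functions in Step 3. The shift by $\lambda$ is what makes \eqref{eq:HO6} applicable, so this step requires care, but it is routine.
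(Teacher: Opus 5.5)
Your argument is correct and is essentially the paper's proof: you shift by $\lambda$, apply the lifting estimates \eqref{eq:HO6}--\eqref{eq:HO7}, feed the lower-order term $\lambda\cdot(\text{potential})$ back into a bootstrap, and remove the constants by a mean-zero normalization. The paper runs this bootstrap directly on $\psi_u$, via $\psi_u=(\lambda-\Delta_B)^{-1}(\lambda\psi_u-\wdv u)$, with $\psi_u$ normalized by $\int_\Sigma\psi_u\,d\mu_g=0$ and with base case $\|\psi_u\|_{H^2_q}\lesssim\|u\|_{H^1_q}$ taken from \cite[Lemma~B.6]{SSW25} and Poincar\'e--Wirtinger, so your preliminary splitting off of $\phi=(\lambda-\Delta_B)^{-1}\wdv u$ is harmless but unnecessary.
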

\begin{proof}
We recall that
$$\PH u := u - \wgd \psi_u$$
and $\psi_u$ solves $\Delta_B\psi_u=\wdv  u$, provided $u\in H_q^1(\Sigma;T\Sigma)$. Furthermore, $\psi_u$ satisfies the estimate
$$\|\wgd \psi_u\|_{H_q^1(\Sigma)}\lesssim \|u\|_{H_q^{1}(\Sigma)},$$
see \cite[Lemma B.6]{SSW25} for the case of compact manifolds with boundary. A similar argument applies to a closed manifold.
W.l.o.g. we may always assume that $\int_\Sigma\psi_ud\mu_g=0$, since $\psi_u$ is unique up to additive constants. The Poincar\'{e}-Wirtinger inequality therefore implies
$$\|\psi_u\|_{H_q^{2}(\Sigma)}\lesssim \|u\|_{H_q^{1}(\Sigma)}.$$
We consider then the shifted problem
\begin{equation}
\label{eq:ResPrbPsi_u}
\lambda \psi_u-\Delta_B\psi_u=\lambda\psi_u-\wdv  u
\end{equation}
for sufficiently large $\lambda>0$ such that \eqref{eq:HO6}-\eqref{eq:HO7} are applicable. We claim that for all $k\in\mathbb{N}$ it holds that
$$u\in H_q^k(\Sigma;T\Sigma)\Rightarrow \psi_u\in H_q^{k+1}(\Sigma)$$
and
$$\|\psi_u\|_{H_q^{k+1}(\Sigma)}\lesssim \|u\|_{H_q^{k}(\Sigma)}.$$
This can be seen by induction w.r.t. $k$. Assume that the claim is true for some $k\in\mathbb{N}$. Let $u\in H_q^{k+1}(\Sigma;T\Sigma)$. Then, by the induction hypothesis, $\psi_u\in H_q^{k+1}(\Sigma)$, hence $\lambda\psi_u-\wdv u\in H_q^{k}(\Sigma)$ and therefore $\psi_u\in H_q^{k+2}(\Sigma)$, by \eqref{eq:HO6} and \eqref{eq:ResPrbPsi_u}. Furthermore, by \eqref{eq:HO7} and the induction hypothesis,
\begin{align*}
\|\psi_u\|_{H_q^{k+2}(\Sigma)}&\lesssim \|\psi_u\|_{H_q^{k}(\Sigma)}+\|\wdv u\|_{H_q^{k}(\Sigma)}\\
&\lesssim \|\psi_u\|_{H_q^{k+1}(\Sigma)}+\|u\|_{H_q^{k+1}(\Sigma)}\\
&\lesssim \|u\|_{H_q^{k+1}(\Sigma)}.
\end{align*}
Evidently, this yields
$$\|\PH u\|_{H_q^k(\Sigma)}\le \|u\|_{H_q^k(\Sigma)}+\|\wgd \psi_u\|_{H_q^k(\Sigma)}\le  \|u\|_{H_q^k(\Sigma)}+\|\psi_u\|_{H_q^{k+1}(\Sigma)}\lesssim \|u\|_{H_q^k(\Sigma)}$$
for all $k\in\mathbb{N}$. The case of non-integer $s>0$ follows from complex interpolation.
\end{proof}
It is an immediate consequence of Proposition \ref{pro:HP_improved} that
$$\| \Ph v\|_{H^k_q(\sN)}   \leq \|\PH \bar{v}\|_{H^k_q(\sN)}  + \| v - \bar{v}\|_{H^k_q(\sN)}   \lesssim \|v\|_{H^k_q(\sN)},
$$
for all $k\in\bN$, hence
\begin{equation}
\label{eq:bdd_hydrHP_improved}
\Ph\in \cL(H^s_q (\sN; T\Sigma) , H^s_{q, \overline{\sigma}} (\sN ; T\Sigma))
\end{equation}
for all $s\ge 0$ (by complex interpolation for non-integer $s>0$). By \eqref{inclusion} we know that on $D(\sA)$, $\sA=-\Delta_{\sN}$, modulo lower order terms. Then, Lemma \ref{lem:highOrd}, perturbation theory and the boundedness of $\Ph$  yield the following result.
\begin{theorem}\label{thm:highReg_hydrStokes}
Let $q\in (1,\infty)$ and $s\ge 0$. Then there exists $\lambda_0>0$ such that for all $\lambda\ge \lambda_0$ it holds that
$$(\lambda+\sA)^{-1}:H^s_{q,\bar{\sigma}} (\sN;T\Sigma)\to \{v  \in H^{2+s}_{q,\bar{\sigma}} (\sN;T\Sigma) : \, v=0 \text{ on }\Sigma_b, \ \ \partial_r v=0 \text{ on }\Sigma_u\}.$$
\end{theorem}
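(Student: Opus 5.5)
We argue by a bootstrap on $s$, starting from the case $s=0$. For $s=0$ the claim is contained in Theorem~\ref{thm: H-infinity calculus}: $\lambda+\sA$ is invertible on $L_{q,\bar\sigma}(\sN;T\Sigma)$ with domain $D(\sA)$. Fix $\lambda\ge\lambda_0$ with $\lambda_0$ large enough that Lemma~\ref{lem:highOrd}, Proposition~\ref{pro: properties of lambda-A} and Theorem~\ref{thm: H-infinity calculus} all apply. Let $f\in H^s_{q,\bar\sigma}(\sN;T\Sigma)$ and set $v:=(\lambda+\sA)^{-1}f\in D(\sA)$.

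By \eqref{inclusion}, $v$ satisfies
$$
(\lambda-\Delta_\sN)v = f - \Ric\, v - B_H v + 2\Ph\Ric\, v,
$$
together with the boundary conditions $v=0$ on $\Sigma_b$ and $\partial_r v=0$ on $\Sigma_u$. Applying $(\lambda-\Delta_\sN)^{-1}$ from Lemma~\ref{lem:highOrd} (with the same boundary conditions) gives
$$
v=(\lambda-\Delta_\sN)^{-1}\big(f-\Ric\, v-B_Hv+2\Ph\Ric\, v\big),
$$
since $v$ is the unique $H^2_q$ solution of this boundary value problem. We then gain regularity in steps of size at most $1$. Suppose $v\in H^{2+t}_q$ for some $t\ge 0$. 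Then:
\begin{itemize}
\item[(i)] $\Ric\, v\in H^{2+t}_q$, because $\Ric$ is a smooth tensor field independent of $r$.
\item[(ii)] $\Ph\Ric\, v\in H^{2+t}_q$, by \eqref{eq:bdd_hydrHP_improved}.
\item[(iii)] $B_Hv=h^{-1}(\PH-I)(\partial_rv|_{\Sigma_b})$ is constant in $r$. By the trace theorem, $\partial_r v|_{\Sigma_b}\in B^{1+t-1/q}_{qq}(\Sigma)$, which embeds into $H^{1+t-1/q-\varepsilon}_q(\Sigma)$. Proposition~\ref{pro:HP_improved} then gives $B_Hv\in H^{1+t-1/q-\varepsilon}_q(\sN)$.
\end{itemize}
Hence the right-hand side lies in $H^{\min(s,\,1+t-1/q-\varepsilon)}_q$. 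Lemma~\ref{lem:highOrd} upgrades $v$ to $H^{2+\min(s,\,1+t-1/q-\varepsilon)}_q$, a gain of a fixed positive amount $1-1/q-\varepsilon$ per step. After finitely many iterations we reach $v\in H^{2+s}_q$.

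The solenoidal property $v\in H^{2+s}_{q,\bar\sigma}$ follows from $v\in D(\sA)\subset L_{q,\bar\sigma}$. The boundary conditions are inherited from $D(\sA)$. The bound
$$
\|v\|_{H^{2+s}_q}\lesssim\|f\|_{H^s_q}
$$
comes out of the iteration, since each step is estimated linearly in terms of $\|f\|_{H^s_q}$ and the previous norm of $v$, starting from the $s=0$ resolvent bound.

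The main obstacle is the nonlocal boundary term $B_Hv$. Its regularity is limited by a trace, and one must check that $\PH$ acting on this trace, then extended constantly in $r$, really lies in the claimed Bessel potential space. Lemma~\ref{lem:highOrd} requires right-hand sides in $H^s_q$ without boundary compatibility conditions. That is exactly its form, so no compatibility issue arises and the constant-in-$r$ extension is harmless.

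Alternatively, one can avoid $B_H$ entirely by working directly with $-\Ph(\Delta_\sN+\Ric)$. By \eqref{I-Ph},
$$
\Ph\Delta_\sN v=\Delta_\sN v-(I-\PH)\overline{\Delta_\sN v},
$$
and $\overline{\partial_r^2 v}=h^{-1}(\partial_rv|_{\Sigma_u}-\partial_r v|_{\Sigma_b})$. This reproduces the same trace term, so the bootstrap above is the route I would take.
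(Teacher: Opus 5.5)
Your argument is correct and follows the same route as the paper, which also uses \eqref{inclusion} to write $\lambda+\sA$ on $D(\sA)$ as $\lambda-\Delta_\sN$ plus lower-order terms and then applies Lemma~\ref{lem:highOrd} together with the boundedness of $\Ph$ in higher-order spaces. Your finite bootstrap, gaining $1-1/q-\varepsilon$ per step and controlling $B_Hv$ through the trace theorem and Proposition~\ref{pro:HP_improved}, is an explicit version of what the paper calls ``perturbation theory''; just take $\lambda_0$ large enough to cover the finitely many regularity indices at which you invoke Lemma~\ref{lem:highOrd}.
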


\section{Local Wellposedness}\label{S:local wellposedness}
We will now turn our attention to the primitive equations~\eqref{PE} and we study the existence, uniqueness and regularity of solutions.
As in \cite{HiKa16, GGHHK20},  we apply the  hydrostatic projection $\Ph$  to \eqref{PE}.
As a result, the pressure term $\pi_s$ will be eliminated and we end up with the following formulation
\begin{equation}
\label{PE-abstract}
\left\{\begin{aligned}
  \partial_t v + \sA v    &= \sF (v)     ,\\
v(0) & = v_0 ,
\end{aligned}\right.
\end{equation}
where  $\sF(v)=F(v,v)$, with
$$
F(v_1,v_2)= - \Ph \left( \nabla_{v_1} v_2    + w (v_1) \partial_r  v_2  \right)  , \quad \text{and }w(v)(\cdot,r):=\int_r^0 \dv_\Sigma v (\cdot,\xi)\, d\xi .
$$
Thanks to Theorem~\ref{thm: H-infinity calculus} we can use well-established arguments to analyze  system~\eqref{PE-abstract}, see
\cite{PrSi16, PrWi17, PSW18} and also \cite{HiKa16, GGHHK20, GGHHK17}.
For the readers' convenience, we will include detailed and complete proofs.

\noindent
Let
$$X_1 = D(\sA){=\{v  \in H^2_{q,\bar{\sigma}} (\sN;T\Sigma) : \, v=0 \text{ on }\Sigma_b, \ \  \partial_r v=0 \text{ on }\Sigma_u\}}$$
and $X_0 = L_{q,\bar{\sigma}}(\sN;T\Sigma)$.
We seek a solution in the maximal regularity space
$$
\bE_{1,\mu}(T):= H^1_{p,\mu} ((0,T); X_0) \cap L_{p,\mu}((0,T); X_1),
$$
for some $\mu\in (1/p,1]$ and $p,q\in (1,\infty)$. We define
$$
\bE_{0,\mu}(T):= L_{p,\mu} ((0,T); X_0)
$$
and the initial data space
$$
X_{\mu-1/p,p}:= (X_0,X_1)_{\mu-1/p,p},
$$
where $(\cdot , \cdot)_{\theta,p}$ with $\theta\in [0,1]$ is the real interpolation functor. Similarly, the complex interpolation spaces {are} defined by
$$
X_\theta :=[X_0, X_1]_\theta.
$$
It is known that
$$
\bE_{1,\mu}(T) \hookrightarrow C \left([0,T]; X_{\mu-1/p,p} \right).
$$
The following proposition gives a complete characterization of these {interpolation} spaces.

\begin{proposition}\label{prop: interpolation}
Let $1<q, p<\infty$,  and $\theta\in (0,1) \setminus \{1/2q, 1/2+1/2q\}$. Then
{
\begin{align*}
X_\theta =
\begin{cases}
\left\{v  \in H^{2\theta}_{q,\bar{\sigma}} (\sN;T\Sigma) : \, v=0 \text{ on }\Sigma_b  \text{ and }  \partial_r v=0 \text{ on }\Sigma_u \right\},\ & 1/2+1/2q<\theta \leq 1 ;\\
\left\{v  \in H^{2\theta}_{q,\bar{\sigma}} (\sN;T\Sigma) : \, v=0 \text{ on }\Sigma_b   \right\}, \ & 1/2q<\theta < 1/2+1/2q  ;\\
  H^{2\theta}_{q,\bar{\sigma}} (\sN;T\Sigma)    , \quad & 0< \theta < 1/2q  ,
\end{cases}
\end{align*}
and
\begin{equation*}
X_{\theta,p} =
\begin{cases}
\left\{v  \in B^{2\theta}_{qp,\bar{\sigma}} (\sN;T\Sigma) : \, v=0 \text{ on }\Sigma_b  \text{ and }  \partial_r v=0 \text{ on }\Sigma_u \right\}, & 1/2+1/2q<\theta \leq 1 ;\\
\left\{v  \in B^{2\theta}_{qp,\bar{\sigma}} (\sN;T\Sigma) : \, v=0 \text{ on }\Sigma_b   \right\}, & 1/2q<\theta < 1/2+1/2q  ;\\
  B^{2\theta}_{qp,\bar{\sigma}} (\sN;T\Sigma)    , \quad & 0< \theta < 1/2q  ,
\end{cases}
\end{equation*}
where $ B^s_{qp,\bar{\sigma}} (\sN;T\Sigma) {:=}  B^s_{qp } (\sN;T\Sigma) \cap L_{q,\bar{\sigma}}(\sN;T\Sigma)  $ for $s\geq 0$ with
\begin{equation}
\label{Def:Besov}
B^s_{qp} (\sN;T\Sigma) :=
\begin{cases}
\left( H^{k}_q   (\sN;T\Sigma) , H^{k+1}_q   (\sN;T\Sigma)\right)_{s-k,p} ,\quad &\text{when } k<s<k+1;\\
\left( H^{k-1}_q   (\sN;T\Sigma) , H^{k+1}_q   (\sN;T\Sigma)\right)_{1/2,p}  , & \text{when } s=k\in \bN
\end{cases}
\end{equation}
for $s\in  (0,\infty)$.
}
\end{proposition}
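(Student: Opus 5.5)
The plan is to reduce the computation of $X_\theta$ and $X_{\theta,p}$ to the known interpolation results for the elliptic operator $\cA$ on the full space $L_q(\sN;T\Sigma)$. The reduction uses a bounded projection that commutes with the relevant resolvents. Set $Y_0:=L_q(\sN;T\Sigma)$ and $Y_1:=D(\cA)=\{v\in H^2_q(\sN;T\Sigma): v=0 \text{ on }\Sigma_b,\ \partial_r v=0 \text{ on }\Sigma_u\}$.

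\textbf{Step 1: interpolation of the unprojected spaces.} First I would identify $[Y_0,Y_1]_\theta$ and $(Y_0,Y_1)_{\theta,p}$. Since $\lambda+\cA$ has a bounded $H^\infty$-calculus by Proposition~\ref{prop: H calculus of A}, the complex interpolation spaces coincide with $D((\lambda+\cA)^\theta)$. The characterization with boundary conditions follows from Seeley's/Grisvard's theorem on interpolation of Sobolev spaces with boundary conditions (in the manifold setting, \cite[Theorem~VII]{AmannBook19} or the localization of \cite[Section 3.1]{SSW25}). Because the boundary parts $\Sigma_b$ and $\Sigma_u$ are disjoint, the two conditions are handled separately in the localization. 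The Dirichlet condition on $\Sigma_b$ is retained exactly when $2\theta>1/q$. The Neumann condition on $\Sigma_u$ is retained exactly when $2\theta>1+1/q$. This produces the three regimes, with $H^{2\theta}_q$ replaced by $B^{2\theta}_{qp}$ in the real case. The definition \eqref{Def:Besov} is chosen so that the real interpolation of the $H^k_q$ scale yields precisely these spaces, by reiteration.

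\textbf{Step 2: a retraction onto the hydrostatic spaces.} Next I need a projection $P\in\cL(Y_0)$ with $P(Y_0)=X_0$ and $P(Y_1)=X_1$. Then \cite[Theorem~1.17.1.1]{Tri78} gives $(X_0,X_1)_\theta=(Y_0,Y_1)_\theta\cap X_0$, and the same for the real method. The obvious candidate $\Ph$ does not work. It maps $H^2_q$ into $H^2_{q,\bar\sigma}$ by \eqref{cont hydro-Helmholtz}, but it destroys the boundary conditions: $\PH\bar v-\bar v$ is a nonzero $r$-independent field, so $v=0$ on $\Sigma_b$ is not preserved.

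Instead I would use Proposition~\ref{pro: properties of lambda-A} and define
$$P:=(\lambda+\cA)^{-1}\Ph(\lambda+\cA)\quad\text{on } Y_1.$$
By Proposition~\ref{pro: properties of lambda-A}(a), $P$ maps $Y_1$ into $D(\sA)=X_1$. By part (b) of the same proposition, $P$ is the identity on $X_1$.

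It is cleaner to argue through the isomorphism directly. The map $\lambda+\cA$ sends $Y_1$ onto $Y_0$, and sends $X_1$ onto $X_0$ by the remark after Proposition~\ref{pro: properties of lambda-A}. Hence $(\lambda+\cA)^{-1}$ restricts to the part of $\lambda+\cA$ in $X_0$, which is exactly $\lambda+\cA|_{D(\sA)}$. Moreover, $\Ph$ commutes with $(\lambda+\cA)^{-1}$ on $Y_0$, by an argument parallel to that for $\Ph$ itself. Consequently the fractional powers satisfy $D((\lambda+\cA|_{X_0})^\theta)=D((\lambda+\cA)^\theta)\cap X_0$, and the same holds for the real interpolation spaces, which are characterized through the resolvent (the $K$-functional via $t(\lambda+t+\cA)^{-1}$).

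\textbf{Step 3: passing from $\cA$ to $\sA$.} By \eqref{inclusion}, $X_1=D(\sA)=D(\cA|_{X_0})$, so the interpolation spaces $(X_0,X_1)$ depend only on the domains. Therefore nothing more is needed beyond combining Steps 1 and 2. The excluded values $\theta=1/2q$ and $\theta=1/2+1/2q$ are exactly the Seeley exceptional points where the trace is undefined.

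\textbf{Main obstacle.} The main obstacle is Step 2: producing a bounded projection compatible with both the boundary conditions and the hydrostatic constraint. The commutation $\Ph(\lambda+\cA)^{-1}=(\lambda+\cA)^{-1}\Ph$ must be checked on all of $Y_0$, and not only on $X_0$. I would try to prove it by applying $(I-\Ph)$, i.e. $(I-\PH)\bar v$, to the averaged equation \eqref{averagee resolvent problem}, and using the identity $\wDel\wgd=\wgd\Delta_B+\Ric\,\wgd$. Failing that, I would apply Triebel's theorem with the projection $P:=I-(\lambda+\cA)^{-1}(I-\Ph)(\lambda+\cA)$ defined on $Y_1$, and verify boundedness on $Y_0$ via duality.
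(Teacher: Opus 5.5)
\textbf{The gap is in Step~2.} You assert that $\Ph(\lambda+\cA)^{-1}=(\lambda+\cA)^{-1}\Ph$ on all of $Y_0$, and you name this as the ``main obstacle'' to be checked. This identity is false, so the obstacle cannot be overcome. If it held, every $v\in Y_1$ would satisfy $\Ph v=(\lambda+\cA)^{-1}\Ph(\lambda+\cA)v\in D(\cA)$, that is, $\Ph(Y_1)\subset Y_1$. You correctly refuted exactly this a few lines earlier: for $v\in Y_1$ one has $\Ph v|_{\Sigma_b}=-\wgd\psi_{\bar v}$, which is nonzero as soon as $\wdv\bar v\neq0$. A concrete example is $v=\phi(r)\,\wgd f$ with $\phi(-h)=0$, $\phi'(0)=0$, $\int_{-h}^0\phi\,dr\neq0$ and $\Delta_B f\neq0$. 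The averaged-equation computation you propose only recovers what Proposition~\ref{pro: properties of lambda-A} already contains. Using $\wDel\wgd=\wgd\Delta_B+\Ric\wgd$ and \eqref{commutator-divergence}, one gets $(I-\Ph)(\lambda+\cA)v=(\lambda-\wDel+\Ric)(I-\PH)\bar v$. This vanishes for $v\in X_1$, but it is not $(\lambda+\cA)(I-\Ph)v$, because the $r$-independent field $(I-\Ph)v$ does not lie in $D(\cA)$. This is why the paper only says that $\cA$ commutes with $\Ph$ ``in the sense of'' Proposition~\ref{pro: properties of lambda-A}.

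\textbf{How to repair it.} The conclusion you draw (``consequently \dots'') needs no commutation, only invariance.
\begin{itemize}
\item[(i)] By Proposition~\ref{pro: properties of lambda-A}(a), $(\mu+\cA)^{-1}X_0\subset X_0$ for every $\mu>\lambda_0$.
\item[(ii)] By part (b) together with Proposition~\ref{pro: properties-Ph}(c), the part $B$ of $\lambda+\cA$ in $X_0$ has domain exactly $D(\sA)=X_1$, with equivalent norms.
\item[(iii)] For $x\in X_0$, the expression $(\lambda+\cA)(t+\lambda+\cA)^{-1}x=x-t(t+\lambda+\cA)^{-1}x$ is the same whether computed with $B$ in $X_0$ or in $Y_0$. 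So the resolvent characterization of the real method gives $(X_0,X_1)_{\theta,p}=(Y_0,Y_1)_{\theta,p}\cap X_0$.
\item[(iv)] For the complex method, $B$ inherits bounded imaginary powers, since the Komatsu-type formulas use only real resolvents. Moreover $D(B^\theta)=D((\lambda+\cA)^\theta)\cap X_0$, which you see by approximating $y$ with $n(n+\lambda+\cA)^{-1}y$ and using that $X_0$ is closed.
\end{itemize}
With this justification, your Steps~1 and~3 complete the argument. This is a clean way to carry out the transfer that the paper only indicates by citing \cite[Propositions~C.3 and C.4]{SSW25} together with Proposition~\ref{pro: properties-Ph}. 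Note that $\Ph$ by itself is a retraction only on the scale without boundary conditions.

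\textbf{On your fallback projection.} It is not wrong, but it is not free. On $Y_1$ the formula above gives $P=\Ph+\mathcal{Z}(I-\Ph)$, where $\mathcal{Z}g$ solves $(\lambda+\cA)z=0$, $z=g$ on $\Sigma_b$, $\partial_r z=0$ on $\Sigma_u$. Its $L_q$-boundedness is therefore an $L_q(\Sigma)\to L_q(\sN)$ very-weak-solution estimate for this nonlocal Dirichlet problem. An appeal to duality alone does not supply that estimate.
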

\begin{proof}
By using Proposition~\ref{prop: interpolation} {and} Proposition~\ref{pro: properties-Ph}, the above interpolation results can be obtained by following the proofs of \cite[Propositions~C.3 and C.4]{SSW25}.
\end{proof}

Theorem~\ref{thm: H-infinity calculus} implies that for any $T>0$ and $\mu \in (1/p,1]$, {it holds that}
\begin{equation}
\label{def-S}
\left[v \mapsto (\partial_t v + \sA v , \gamma v ) \right] \in \Lis (\bE_{1,\mu}(T), \bE_{0,\mu}(T) \times X_{\mu-1/p,p}),
\end{equation}
where $\gamma v = v(0)$ is the {trace operator at $t=0$, see for instance \cite{DHP03, PrSi16}.
We then set
\begin{equation}
\label{sA-MR}
\sS:=  (\partial_t + \sA  , \gamma )^{-1}\in \cL(\bE_{0,\mu}(T) \times X_{\mu-1/p,p} , \bE_{1,\mu}(T)).
\end{equation}
In order to estimate the nonlinear term $\sF$, we first establish the following auxiliary result, which is also of independent interest.

\begin{lemma}\label{lem:anisotropic space}
Let $s,\tau\ge 0$ and $q\in (1,\infty)$. Then
\begin{align*}
H^{\tau+s}_q(\sN; T\Sigma)  \hookrightarrow  H^{\tau}_q((-h,0); H^s_q(\Sigma; T\Sigma))  .
\end{align*}
\end{lemma}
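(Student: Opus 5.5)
The plan is to reduce the embedding to integer exponents, where it follows from the product structure $\sN=\Sigma\times[-h,0]$ and Fubini, and then pass to general $s,\tau\ge 0$ by interpolation.

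\emph{Integer case.} Let $\tau=m$ and $s=k$ be nonnegative integers. Since $g_\sN=g\oplus g_\bR$ is a product metric, the Levi-Civita connection splits. Hence for $v\in H^{m+k}_q(\sN;T\Sigma)$ the mixed derivatives $\partial_r^j\nabla^i v$ with $j\le m$ and $i\le k$ are components of $(\nabla^\sN)^{i+j}v$, and $\partial_r$ commutes with $\nabla=\nabla^\Sigma$. Because $d\mu_{g_\sN}=d\mu_g\,dr$, Fubini gives
$$\|v\|^q_{H^m_q((-h,0);H^k_q(\Sigma))}\simeq\sum_{j\le m}\int_{-h}^0\sum_{i\le k}\|\nabla^i\partial_r^j v(\cdot,r)\|^q_{L_q(\Sigma)}\,dr\lesssim \|v\|^q_{H^{m+k}_q(\sN)}.$$
The norm on $H^k_q(\Sigma;T\Sigma)$ used here is the covariant-derivative norm, which is equivalent to the standard one on a closed manifold. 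One also has to check that weak $r$-derivatives of the $H^k_q(\Sigma)$-valued map coincide with $\partial_r^j v$. This follows by density of $C^\infty(\sN;T\Sigma)$.

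\emph{Interpolation.} First fix an integer $m$ and interpolate in $s$. Apply complex interpolation $[\cdot,\cdot]_\theta$ to the embeddings $H^{m+k}_q(\sN)\hookrightarrow H^m_q((-h,0);H^k_q(\Sigma))$ for $k$ and $k+1$. The left-hand sides interpolate to $H^{m+k+\theta}_q(\sN)$ by the standard interpolation of Bessel potential spaces on manifolds (as used in Proposition~\ref{pro: properties-Ph}(b), via \cite{Ama13}). For the right-hand sides I will use
$$[H^m_q(I;X_0),H^m_q(I;X_1)]_\theta=H^m_q(I;[X_0,X_1]_\theta),$$
valid for UMD spaces, which holds for integer $m$ via a bounded extension operator from $I=(-h,0)$ to $\bR$ commuting with the coefficient space. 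This handles integer $\tau$ and all $s\ge0$.

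Next, with $s$ now real and fixed, interpolate in $\tau$ between $m$ and $m+1$. The right-hand sides give $[H^m_q(I;Y),H^{m+1}_q(I;Y)]_\theta=H^{m+\theta}_q(I;Y)$ with $Y=H^s_q(\Sigma;T\Sigma)$, which is UMD, so this is the vector-valued Bessel potential space. The left-hand sides give $[H^{m+s}_q(\sN),H^{m+1+s}_q(\sN)]_\theta=H^{m+\theta+s}_q(\sN)$, as required.

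\emph{Main obstacle.} The delicate step is the vector-valued interpolation identities on the bounded interval $(-h,0)$. To prove them, I would use a universal extension operator (for example the Seeley-type reflection) that is simultaneously bounded on $H^j_q(I;X)\to H^j_q(\bR;X)$ for all $j\le m+1$ and all Banach spaces $X$. Via retraction and coretraction this reduces the identities to the line, where they hold for UMD $X$ by Fourier multiplier theory. As a fallback, one can work locally in charts of $\Sigma$ and reduce to $H^{\tau+s}_q(\bR^2\times I)\hookrightarrow H^\tau_q(I;H^s_q(\bR^2))$. On $\bR^3$ this follows from the multiplier estimate $(1+|\xi'|^2)^{s/2}(1+\xi_3^2)^{\tau/2}\le(1+|\xi|^2)^{(s+\tau)/2}$ together with Mikhlin's theorem applied to the quotient symbol, combined with extension across the boundary of $I$.
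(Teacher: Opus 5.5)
Your argument is correct, but it takes a different route from the paper. The paper does not go through integer exponents. It first extends $u$ from $(-h,0)\times\Sigma$ to $\bR\times\Sigma$. It then uses three operators, each with a bounded $H^\infty$-calculus: $A=I-\partial_r^2$ on $L_q(\bR;E)$, $B=I-\Delta_\Sigma$, and $C=A+B$ on $L_q(\bR\times\Sigma;T\Sigma)$, the last via the same $\mathcal R$-boundedness sum argument as in Proposition~\ref{prop: H calculus of A}. Bounded imaginary powers identify $D(A^{\tau/2})$, $D(B^{s/2})$ and $D(C^{\sigma/2})$ with Bessel potential spaces. The embedding then follows in one step from the boundedness of $A^{\tau/2}B^{s/2}C^{-(\tau+s)/2}$, which the joint functional calculus provides. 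You instead prove the integer case directly from the splitting of $\nabla^\sN$ and Fubini. You then move all the analytic work into two vector-valued interpolation identities on the interval, handled by a universal extension operator and Bessel potentials on $L_q(\bR;X)$ with $X$ UMD. Only the inclusion of the interpolated space into the target space is actually needed there. Your route is more elementary and makes the role of the product structure explicit. It also avoids two steps the paper only sketches: the $H^\infty$-calculus of the sum on $\bR\times\Sigma$ and the joint calculus. The paper's route is shorter given the machinery of Section~\ref{S: hydrostatic Stokes operator}, and it covers all real $(s,\tau)$ at once without the two-stage interpolation bookkeeping. Your fallback, via charts and the Mikhlin multiplier $(1+|\xi'|^2)^{s/2}(1+\xi_3^2)^{\tau/2}(1+|\xi|^2)^{-(s+\tau)/2}$, is essentially the flat-space version of the paper's joint-calculus estimate.
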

\begin{proof}
By \cite[Theorem 6.1.8]{PrSi16} the operator $A:=I-\partial_r^2$ has a bounded $H^\infty$-calculus with $H^\infty$-angle $\phi_A^\infty=0$ in $L_q(\bR;E)$, where $E$ is any UMD-space.

Moreover, by \cite[Proposition 3.2]{SSW25}, (for the case $\partial\Sigma=\emptyset$), the operator $B:=I-\Delta_\Sigma$ has a bounded $H^\infty$-calculus with $H^\infty$-angle $\phi_B^\infty<\pi/2$ in $L_q(\Sigma;T\Sigma)$.
As in the proof of Proposition~\ref{prop: H calculus of A} one can show that the operator
$$C:=2I-\partial_r^2-\Delta_\Sigma=A+B$$
has a bounded $H^\infty$-calculus with $H^\infty$-angle $\phi_C^\infty\le\phi_B^\infty$ in $L_q(\bR\times \Sigma;T\Sigma)$.
Since $A,B,C$ have bounded imaginary powers, we obtain from \cite[Theorem 1.15.3]{Tri78} that
$$D(A^{\tau/2})=H_q^\tau(\bR;E),\quad D(B^{s/2})=H_q^s(\Sigma;T\Sigma),\quad D(C^{\sigma/2})=H_p^\sigma(\bR\times\Sigma;T\Sigma)$$
for any $\tau,s,\sigma\ge 0$. Indeed, for each $k\in\bN$ it holds that
$$D(A^k)=H_q^{2k}(\bR;E),\quad D(B^k)=H_q^{2k}(\Sigma;T\Sigma),\quad D(C^{k})=H_p^{2k}(\bR\times\Sigma;T\Sigma),$$
which follows from a localization procedure as in the proof of Lemma \ref{lem:highOrd}.

Let $\tilde{u}\in H_q^{\tau+s}(\bR\times\Sigma;T\Sigma)$. From the above considerations and with $E:=H_q^s(\Sigma;T\Sigma)$ we then obtain
\begin{equation}
\label{MW}
\begin{aligned}
\| \tilde{u} \|_{H^{\tau}_q(\bR; H^s_q(\Sigma;T\Sigma))}&\lesssim \| A^{\tau/2} \tilde{u} \|_{L_q(\bR;H_q^s(\Sigma;T\Sigma))}\\
&\lesssim \| A^{\tau/2}   B^{s/2} \tilde{u} \|_{L_q(\bR\times\Sigma;T\Sigma)} \\
&\lesssim \| C^{(\tau+s)/2} \tilde{u} \|_{L_q(\bR\times\Sigma;T\Sigma)} \\
&\lesssim \|\tilde{u}\|_{H_q^{\tau+s}(\bR\times\Sigma;T\Sigma)}.
\end{aligned}
\end{equation}
In the above estimate~\eqref{MW},  we have furthermore used the fact that the operator $$A^{\tau/2}B^{s/2}[C^{(\tau+s)/2}]^{-1}$$
is bounded in $L_q(\bR \times \Sigma; T\Sigma)$,
 which follows for instance  from the joint functional calculus, see \cite[Corollary 4.5.8]{PrSi16}, and by extending $B$ canonically to $L_q(\bR\times \Sigma;T\Sigma)$. Let
$$
S:H^{\tau+s}_q((-h,0)\times \Sigma; T\Sigma)\to H^{\tau+s}_q(\bR\times \Sigma; T\Sigma)
$$
be a linear and bounded extension operator, whose existence may be seen as in \cite[Theorem 5.19]{Ada03} or \cite[Lemma 2.5]{MeSc12}.
For any $u\in H^{\tau+s}_q((-h,0)\times \Sigma; T\Sigma)$ we set $\tilde{u}:=Su$. Then it holds that
$$
  \| u \|_{H^{\tau}_q((-h,0); H^s_q(\Sigma;T\Sigma))}  \leq \| \tilde{u} \|_{H^{\tau}_q(\bR; H^s_q(\Sigma;T\Sigma))}
  \lesssim \|\tilde{u}\|_{H_q^{\tau+s}(\bR\times\Sigma;T\Sigma)}
  \lesssim \|{u}\|_{H_q^{\tau+s}((-h,0)\times\Sigma;T\Sigma)}.
$$
\end{proof}

To prove  local wellposedness of \eqref{PE-abstract}, we will need the following estimate for the  term {$\sF(v)$}.
\begin{lemma}
\label{lem:bilinear}
Let $q\in (1,\infty)$ and $s\ge 0$. Then there exists some $C>0$ such that
\begin{itemize}
\item[{\em (a)}] 
\begin{equation}
\label{eq:estF}
\| F (v_1,v_2) \|_{H_q^{s}(\sN)} \leq C \| v_1\|_{H^{s+1+1/q}_{q} (\sN)}\| v_2\|_{H^{s+1+1/q}_{q}  (\sN)},
\end{equation}
for all $v_1,v_2\in H^{s+1+1/q}_{q,\bar{\sigma}}  (\sN;T\Sigma)$. Moreover, $\sF\in C^\infty(H_{q,\bar{\sigma}}^{s+1+1/q}(\sN;T\Sigma);H_{q,\bar{\sigma}}^{s}(\sN;T\Sigma))$.
\vspace{1mm}
\item[{\em (b)}]
$
\| \sF (v_1) - \sF(v_2) \|_{X_0}
 \leq C \big( \| v_1\|_{X_{\frac{1}{2} + \frac{1}{2q}}} + \| v_2\|_{X_{\frac{1}{2} + \frac{1}{2q}}} \big) \| v_1-v_2\|_{X_{\frac{1}{2} + \frac{1}{2q}}} ,
$
for all \\
$v_1,v_2\in X_{\frac{1}{2} + \frac{1}{2q}}$.
\end{itemize}
\end{lemma}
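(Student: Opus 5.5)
Since $\Ph$ is bounded on $H^s_q(\sN;T\Sigma)$ for every $s\ge 0$ by \eqref{eq:bdd_hydrHP_improved}, it suffices in (a) to bound the two terms $\nabla_{v_1}v_2$ and $w(v_1)\partial_r v_2$ in $H^s_q(\sN)$. Part (b) then follows from (a) with $s=0$ and bilinearity. Indeed, $\sF(v_1)-\sF(v_2)=F(v_1,v_1-v_2)+F(v_1-v_2,v_2)$, and $X_{1/2+1/2q}\hookrightarrow H^{1+1/q}_{q}(\sN;T\Sigma)$ by Proposition~\ref{prop: interpolation} (the exceptional value $\theta=1/2+1/2q$ is handled by sandwiching between neighbouring complex interpolation spaces, or by using the embedding $[X_0,X_1]_\theta\hookrightarrow [L_q,H^2_q]_\theta = H^{2\theta}_q$, which needs no boundary characterization). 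Smoothness of $\sF$ is automatic once continuity of the bilinear map $F$ is established, since a bounded bilinear map is $C^\infty$ (indeed polynomial). The map lands in $H^s_{q,\bar\sigma}$ because $\Ph$ does.

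\emph{The term $\nabla_{v_1}v_2$.} In local coordinates this is a sum of products $v_1^i\partial_i v_2^k$ plus Christoffel terms $\Gamma v_1 v_2$ with smooth coefficients. Set $\sigma:=s+1+1/q$. I would use the standard Sobolev multiplication estimate on the three-dimensional compact manifold $\sN$: $H^{\sigma}_q\cdot H^{\sigma-1}_q\hookrightarrow H^{s}_q$. This holds because $\sigma-1=s+1/q$ and, in dimension $3$, the loss satisfies $(\sigma-s)+(\sigma-1-s)=1+2/q\ge 3/q$ exactly when $q\ge 1$; some care is needed at the endpoints. This crude route may fail for small $q$, so it is not the main line. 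The main line is the anisotropic argument described next, which is the real content and is needed for the $w$-term anyway.

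\emph{The term $w(v_1)\partial_r v_2$ — the main obstacle.} Here $w(v_1)=\int_r^0 \wdv v_1\,d\xi$ costs a full horizontal derivative but gains regularity in $r$. Write $\sigma=s+1+1/q$ and apply Lemma~\ref{lem:anisotropic space}. It gives $v_1\in H^{\tau}_q((-h,0);H^{\sigma-\tau}_q(\Sigma))$ for each $\tau\in[0,\sigma]$, so $\wdv v_1\in H^{\tau}_q((-h,0);H^{\sigma-\tau-1}_q(\Sigma))$, and the vertical integration raises $\tau$ by one. Taking $\tau=0$ gives $w\in H^1_q((-h,0);H^{s+1/q}_q(\Sigma))\hookrightarrow L_\infty((-h,0);H^{s+1/q}_q(\Sigma))$. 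In the same way, $\partial_r v_2\in L_q((-h,0);H^{s+1/q}_q(\Sigma))$ together with $H^{s+1/q}_q((-h,0);L_q(\Sigma))$ and intermediate spaces. The product should then be estimated in the form $L_\infty(H^{a})\cdot L_q(H^{b})$ in $r$ and $\Sigma$ respectively, using two-dimensional multiplication on $\Sigma$: $H^{s+1/q}_q(\Sigma)\cdot H^{s+1/q}_q(\Sigma)\hookrightarrow H^{s}_q(\Sigma)$. This holds provided the Sobolev balance $2/q\le 2/q$ on the two-dimensional surface is met, which is borderline. I expect to have to trade a small amount of vertical for horizontal regularity, using Lemma~\ref{lem:anisotropic space} with varying $\tau$ and mixed-norm Sobolev embeddings in each variable separately, and to interpolate between the integer cases. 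For integer $s$ I would distribute derivatives by the Leibniz rule, treating $\partial_r^j$ and $\nabla^{k}$ separately; note that $\partial_r w=-\wdv v_1$ converts vertical derivatives of $w$ back into horizontal derivatives of $v_1$. Non-integer $s$ then follows by bilinear complex interpolation. Balancing the exponents so that this borderline product closes is the step I expect to be delicate, as in the flat case treated in \cite{HiKa16, GGHHK20}, whose Hölder/embedding bookkeeping I would mimic, with Lemma~\ref{lem:anisotropic space} replacing the Euclidean mixed-derivative theorem.
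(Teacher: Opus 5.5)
Your proposal is correct and follows the paper's proof. Both reduce via boundedness of $\Ph$ on $H^s_q$, bound $w(v_1)\partial_r v_2$ at $s=0$ by $\|w(v_1)\|_{L_\infty((-h,0);L_{2q}(\Sigma))}\|\partial_r v_2\|_{L_q((-h,0);L_{2q}(\Sigma))}$ using Lemma~\ref{lem:anisotropic space} and $H^{1/q}_q(\Sigma)\hookrightarrow L_{2q}(\Sigma)$, treat integer $s$ by induction and non-integer $s$ by bilinear complex interpolation, and get (b) and smoothness from bilinearity. Two of your hesitations are unnecessary: the isotropic route for $\nabla_{v_1}v_2$ works for every $q\in(1,\infty)$, since the paper uses $\|v_1\|_{L_{3q}(\sN)}\|v_2\|_{H^1_{3q/2}(\sN)}$ with $H^{1+1/q}_q(\sN)\hookrightarrow L_{3q}(\sN)$ and $H^{1+1/q}_q(\sN)\hookrightarrow H^1_{3q/2}(\sN)$, and the borderline $w$-product at $s=0$ already closes because the equality case of the Sobolev embedding is admissible, so no further balancing of exponents is needed.
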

\begin{proof}
(a) It follows from \eqref{eq:bdd_hydrHP_improved} that
\begin{align*}
\| F (v_1,v_2) \|_{L_q (\sN)}& \lesssim   \| \nabla_{v_1} v_2  \|_{L_q (\sN)} +  \| w (v_1) \partial_r  v_2 \|_{L_q (\sN)}.
\end{align*}
By \eqref{pointwise-estimate}, H\"{o}lder's inequality and  the Sobolev embedding theorem \cite[Theorems 14.1 and 14.2]{Ama13}, it holds that
\begin{align*}
\| \nabla_{v_1} v_2  \|_{L_q (\sN)}  \leq \|v_1\|_{L_{3   q}(\sN)} \|   v_2 \|_{H^1_{3 q / 2}(\sN)} \leq C \| v_1\|_{H^{1+1/q}_{q}  (\sN)}\| v_2\|_{H^{1+1/q}_{q}  (\sN)}.
\end{align*}
It follows from Lemma~\ref{lem:anisotropic space} and \cite[Theorem~14.2]{Ama13} that
\begin{align*}
\| w (v_1) \partial_r  v_2 \|_{L_q (\sN)} & \leq  \| w (v_1) \|_{L_\infty((-h,0);L_{2q}(\Sigma))} \| \partial_r v_2 \|_{L_q((-h,0);L_{2q}(\Sigma))}\\
&\leq C \| \wdv v_1 \|_{L_q((-h,0);L_{2q}(\Sigma))} \|   v_2 \|_{H^1_q((-h,0);L_{2q}(\Sigma))}\\
& \leq C \|   v_1 \|_{L_q((-h,0);H^{1+1/q}_{q} (\Sigma))} \|   v_2 \|_{H^1_q((-h,0);H^{1/q}_{q}(\Sigma))}\\
& \leq C \| v_1\|_{H^{1+1/q}_{q}  (\sN)}\| v_2\|_{H^{1+1/q}_{q}  (\sN)}.
\end{align*}
This proves \eqref{eq:estF} in case $s=0$. For $s=k\in\bN$, \eqref{eq:estF} can be proven inductively, using \eqref{eq:bdd_hydrHP_improved} and Lemma \ref{lem:anisotropic space}. The case of non-integer $s>0$ is a consequence of complex interpolation for bilinear operators, see e.g. \cite[Section 1.19.5]{Tri78}. Finally, the smoothness of $\sF(v)=F(v,v)$ follows from the bilinear structure of $F$ and \eqref{eq:estF}.

\medskip\noindent
(b) The difference estimate is a direct consequence of part (a) for $s=0$ and the fact that the norm in the spaces $X_\beta$, $\beta\in [0,1]$, is given by $\|\cdot\|_{H_{q}^{2\beta}(\sN)}$.
\end{proof}

The following lemma will be useful for showing additional regularity of solutions to \eqref{PE-abstract}.

\begin{lemma}
\label{lem:F-cont}
{Let $p,q\in (1,\infty)$ such that $\frac{1}{p}+ \frac{1}{q}\leq 1$, $\mu\in [1/p+1/q,1]$} and $T>0$. Then  \\$\sF\in C^\infty(\bE_{1,\mu}(T), \bE_{0,\mu}(T))$ with
$$
\partial \sF (v)u= F(u,v) + F(u,v)
$$
and
\begin{equation}
\label{bilinear-est-time}
 \| F(u,v) \|_{\bE_{0,\mu}(T)} \leq C \| u \|_{\bE_{1,\mu}(T)} \| v \|_{\bE_{1,\mu}(T)} .
\end{equation}
Moreover, given any $v\in \bE_{1,\mu}(T)$, {it holds that}
\begin{equation}
\label{solution map}
\left[ u \mapsto  (\partial_t u + \sA u  - { \partial \sF(v)} u , \gamma u ) \right] \in \Lis (\bE_{1,\mu}(T), \bE_{0,\mu}(T) \times X_{\mu-1/p,p}).
\end{equation}
\end{lemma}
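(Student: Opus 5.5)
The plan is to derive the bilinear estimate \eqref{bilinear-est-time} from the stationary estimate in Lemma~\ref{lem:bilinear}(a) with $s=0$. The time weights are handled by H\"older's inequality in $t$, combined with the mixed-derivative embedding of the maximal regularity space. Smoothness of $\sF$ and the formula for its derivative then follow because $F$ is bilinear. The isomorphism \eqref{solution map} comes from a perturbation argument built on \eqref{def-S}.

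\textbf{Step 1 (embedding).} Set $\beta:=\tfrac12+\tfrac1{2q}$. By Lemma~\ref{lem:bilinear}(a),
\[
\|F(u,v)(t)\|_{X_0}\le C\|u(t)\|_{X_\beta}\|v(t)\|_{X_\beta}.
\]
Hence
\[
\|F(u,v)\|_{\bE_{0,\mu}(T)} \le C\,\|t^{1-\mu}\|u\|_{X_\beta}\|v\|_{X_\beta}\|_{L_p(0,T)}.
\]
We use the mixed derivative theorem, available since $\sA$ has a bounded $H^\infty$-calculus by Theorem~\ref{thm: H-infinity calculus}. It gives
\[
\bE_{1,\mu}(T)\hookrightarrow H^{1-\beta}_{p,\mu}((0,T);X_\beta).
\]
The weighted Sobolev embedding in time then yields, for $1-\beta>1/p$, the bound
\[
\sup_t t^{1-\mu}\|u(t)\|_{X_\beta}\lesssim\|u\|_{\bE_{1,\mu}}.
\]
More generally it yields
\[
\bE_{1,\mu}(T)\hookrightarrow L_{2p,\sigma}((0,T);X_\beta), \qquad \sigma=\tfrac{1+\mu}2 \ \text{(weight } t^{(1-\mu)/2}\text{ on each factor)}.
\]
The natural route is to split the weight symmetrically: $t^{1-\mu}=t^{(1-\mu)/2}\,t^{(1-\mu)/2}$. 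Then H\"older gives
\[
\|F(u,v)\|_{\bE_{0,\mu}}\le C\|u\|_{L_{2p,\sigma}(X_\beta)}\|v\|_{L_{2p,\sigma}(X_\beta)}.
\]
The required embedding $H^{1-\beta}_{p,\mu}\hookrightarrow L_{2p,\sigma}$ holds by weighted Sobolev embedding precisely when $1-\beta-\tfrac1p+(\mu-1)\ge -\tfrac1{2p}+\tfrac{\mu-1}{2}$, i.e. $\mu\ge \tfrac1p+\tfrac1q$ after simplification with $1-\beta=\tfrac12-\tfrac1{2q}$. This is exactly the hypothesis, and the condition $\tfrac1p+\tfrac1q\le1$ guarantees $\mu\le1$ is admissible. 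The main obstacle is checking this index bookkeeping carefully, including the critical case $\mu=1/p+1/q$ where the embedding is sharp. If needed, I would instead cite the weighted embedding results of \cite{PrSi16, PSW18}, which are designed for exactly this critical-space setting.

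\textbf{Step 2 (smoothness).} Since $F$ is bilinear and bounded $\bE_{1,\mu}\times\bE_{1,\mu}\to\bE_{0,\mu}$ by \eqref{bilinear-est-time}, the map $\sF(v)=F(v,v)$ is a continuous quadratic polynomial. It is therefore $C^\infty$, with $\partial\sF(v)u=F(u,v)+F(v,u)$; the displayed formula should read this way. All derivatives of order at least three vanish.

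\textbf{Step 3 (linearized isomorphism).} Write $B:=\partial\sF(v)$, which is linear and bounded $\bE_{1,\mu}(T)\to\bE_{0,\mu}(T)$. The plan is to treat $B$ as a small perturbation of $\partial_t+\sA$ on short intervals and then iterate.

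First, for $u$ with $u(0)=0$, one needs smallness of $\|Bu\|_{\bE_{0,\mu}(\tau)}$ relative to $\|u\|_{\bE_{1,\mu}(\tau)}$ on short intervals $(0,\tau)$. This comes from Step 1 by estimating
\[
\|F(u,v)\|\le C\|u\|_{\bE_{1,\mu}(\tau)}\,\|v\|_{L_{2p,\sigma}((0,\tau);X_\beta)},
\]
and $\|v\|_{L_{2p,\sigma}((0,\tau);X_\beta)}\to0$ as $\tau\to0$ by absolute continuity of the integral. For this to work, the embedding constants must be uniform in $\tau$ on the subspace with zero trace, which is standard.

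A Neumann series applied to $\sS$ from \eqref{sA-MR} then gives invertibility on $(0,\tau)$. Because $\tau$ depends only on the integrability profile of $v$, one covers $[0,T]$ by finitely many such intervals. On later intervals there is no weight, and the initial traces lie in $X_{1-1/p,p}\hookrightarrow X_{\mu-1/p,p}$. Concatenating the solutions yields the bijection \eqref{solution map}, and boundedness of the inverse follows from the open mapping theorem.
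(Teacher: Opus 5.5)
Your proposal is correct and follows essentially the same route as the paper: it also combines H\"older's inequality with the weighted embedding ${}_0\mathbb{E}_{1,\mu}(T)\hookrightarrow L_{2p,\eta}((0,T);X_\beta)$, whose weight $\eta$ coincides with your $\sigma=(1+\mu)/2$ (there $\beta$ is fixed by $2\beta-1=\mu-1/p$), derives smoothness from bilinearity, and proves the isomorphism by a contraction on short intervals with interval-independent constants on the zero-trace space, iterated finitely many times to reach $T$; your Neumann series is the linear form of that contraction. You are also right that the derivative should read $\partial\sF(v)u=F(u,v)+F(v,u)$, which is the form the paper actually uses in its proof.
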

\begin{proof}
In this proof, for any Banach space $X$, and a function space $\mathfrak{F}((0,T);X)$,
the space $\prescript{}{0}{\mathfrak{F}} ((0,T);X)$ denotes the subspace of functions with zero initial trace (whenever it is well-defined). {For example,}
$$
\zbE_{1,\mu} (T) = \{ u\in \bE_{1,\mu} (T) : \,  \gamma u =0 \}.
$$
Based on \cite[Corollary~1.4]{MeVe12}, it was observed in \cite{PrWi17, PSW18} that for $2\beta -1 =\mu-1/p$ and $\eta= 1-\beta-1/2p + \mu $, it holds that
\begin{equation}
\label{emb-L2pmu}
\zbE_{1,\mu}(T) \hookrightarrow L_{2p,\eta}((0,T); X_\beta)
\end{equation}
with an embedding constant independent of $T$.\\
It follows from Lemma~\ref{lem:bilinear}(a)  and  H\"older's inequality  that
\begin{equation}
\label{est-L2pmu}
\| F(u,v) \|_{\bE_{0,\mu}(T)} \leq C \| u \|_{L_{2p,\eta}((0,T);X_\beta)}\|v\|_{L_{2p,\eta}((0,T);X_\beta)},
\end{equation}
since $X_\beta\hookrightarrow H_q^{1+1/q}(\sN;T\Sigma)$, provided $\mu\in [1/p+1/q,1]$.

The constant $C$ again is independent of $T$.
The bilinear estimate~\eqref{bilinear-est-time} then follows.
The smoothness of $\sF$ is a direct consequence of \eqref{bilinear-est-time} and its bilinear structure.

\smallskip
Next, we consider solvability of the system
\begin{equation}
\label{abstract-nonliner}
\left\{\begin{aligned}
  \partial_t u + \sA u - \partial \sF(v) u   &= f    ,\\
u(0) & = u_0 ,
\end{aligned}\right.
\end{equation}
with $f\in \bE_{0,\mu}(T)$ and $u_0 \in X_{\mu-1/p,p}$.
For some $T_0\in (0,T]$, to be determined later, we define the set
$$
W_{T_0}:= \left\{ \phi\in \bE_{1,\mu}(T_0) : \, \phi(0)=u_0  \right\};
$$
and for $\phi\in W_{T_0}$ we define $\sG (\phi):= u$
 iff $u$ solves
\begin{equation*}
\left\{\begin{aligned}
  \partial_t u + \sA u - F (\phi, v) - F(v ,\phi)   &= f ,\\
u(0) & = u_0 ,
\end{aligned}\right.
\end{equation*}
that is,
$$
\sG (\phi) = \sS(f + F (\phi, v) + F(v ,\phi) , u_0).  
$$
 Here we remind that $v\in \bE_{1,\mu}(T)$ is given and $\partial \sF(v)\phi = F (\phi, v) + F(v ,\phi).$
It  follows from~\eqref{sA-MR}  that
$$
\sG : W_{T_0}  \to W_{T_0}
$$
is well-defined.
In virtue of  \eqref{def-S},  \eqref{emb-L2pmu}, and \eqref{est-L2pmu}
\begin{equation}
\label{Lip est}
\begin{split}
  \| \sG (\phi_1)  - \sG (\phi_2) \|_{\bE_{1,\mu}(T_0)} & = \| \sS(  F (\phi_1 - \phi_2, v) + F(v ,\phi_1-\phi_2) , 0) \|_{\bE_{1,\mu}(T_0)} \\
  & \leq C \| F (\phi_1 - \phi_2, v) + F(v ,\phi_1-\phi_2)\|_{\bE_{0,\mu}(T_0)} \\
  & \leq C'  \| v \|_{L_{2p,\eta}((0,T);X_\beta)}        \| \phi_1 - \phi_2 \|_{\bE_{1,\mu}(T_0)} ,
\end{split}
\end{equation}
where the constant $C'$ is independent of $T_0$.
Since $\| v \|_{L_{2p,\eta}((0,T_0);X_\beta)} <\infty$, we have
$$
 \| v \|_{L_{2p,\eta}((t_0,t_1);X_\beta)} \to 0
$$
for $0\leq t_0 < t_1 \leq T$ as $|t_1-t_0|\to 0$.
By choosing $T_0>0$ so small that $\| v \|_{L_{2p,\eta}((0,T_0);X_\beta)}  < 1/2C'$, \eqref{Lip est} implies that
$\sG : W_{T_0 }\to  W_{T_0}$
is a contraction.

The contraction mapping principle yields the existence of a unique solution $u\in \bE_{1,\mu}(T_0)$ to \eqref{abstract-nonliner} on $[0,T_0]$.
Note that the step size $T_0$ is independent of {$(f,u_0)$} and only depends on $v$.
Therefore, by repeating the above argument for finitely many steps and following
the argument in \cite[Proposition~4.2]{DSS24}, we obtain a solution  $u\in \bE_{1,\mu}(T)$ to \eqref{abstract-nonliner}.

If \eqref{abstract-nonliner} admits two  solutions $u_1,u_2\in \bE_{1,\mu}(T_0)$, then $\hat{u}=u_1-u_2$ solves
\begin{equation*}
\left\{\begin{aligned}
  \partial_t \hat{u} + \sA \hat{u}   &=  F (\hat{u}, v) + F(v ,\hat{u})      ,\\
\hat{u} (0) & = 0 .
\end{aligned}\right.
\end{equation*}
Note that for any $0<a<\infty$
\begin{align*}
\| \hat u\|_{\bE_{1,\mu}(a)} & = \| \sS (F (\hat{u}, v) + F(v ,\hat{u}) , 0)\|_{\bE_{1,\mu}(a)}\\
&\leq C  \|F (\hat{u}, v) + F(v ,\hat{u}) \|_{\bE_{0,\mu}(a)}  \\
&\leq C'' \| v\|_{\bE_{1,\mu}(a)}  \|\hat{u} \|_{\bE_{1,\mu}(a)},
\end{align*}
where the constant $C'$ only depends on the norm of $\sS$ and the constant in \eqref{bilinear-est-time}.
For sufficiently small $a$ we have $\| v\|_{\bE_{1,\mu}(a)}<1/2C''$, and this implies $\hat{u}=0$ on $[0,a]$. By repeating this argument for finitely many times, we can conclude that $u_1=u_2$ on $[0,T]$. 
This establishes the assertion in~\eqref{solution map}.
\end{proof}

Now our main result for local well-posedness of \eqref{PE} is a direct consequence of \cite[Theorem~2.1]{PSW18} and Theorem~\ref{thm: H-infinity calculus}.
\begin{proposition}
\label{prop:local existence}
Let $p,q\in (1,\infty)$ such that $\frac{1}{p}+ \frac{1}{q}\leq 1$, 
and $\mu \in \left[ \frac{1}{p}+ \frac{1}{q}, 1 \right]$.
\begin{itemize}
\item[\em (a)]  For any
$
v_0\in X_{\mu-1/p,p}  ,
$
there exists a number $a=a(v_0)\in (0,\infty)$ and a unique solution $v \in \bE_{1,\mu}(a)$ to \eqref{PE-abstract} such that
$$
v\in C([0,a]; X_{\mu-1/p,p}) \cap C((0,a]; X_{1-1/p,p}).
$$
\vspace{-2mm}
\item[\em (b)]
 Let $T_+=T_+(v_0)=\sup \left\{a\in (0,\infty): \text{\eqref{PE-abstract} has a solution in }\bE_{1,\mu}(a) \right\}$.  Let  $0<T<T_+$.
Then  there exists $\rho>0$ such that for any  $u_0 \in B_{X_{\mu-1/p,p}}(v_0, \rho)$, \eqref{PE-abstract} with initial value $u_0$ has a unique solution
$u=u(\cdot, u_0) \in \bE_{1,\mu}(T)$.
 Moreover, it holds that
 $$
 [u_0 \mapsto  u(\cdot, u_0)]\in C^\infty( B_{X_{\mu-1/p,p}}(v_0, \rho), \bE_{1,\mu}(T)).
 $$
\item[\em (c)] For any $T\in (0,T_+)$ it holds  that
 $$[t \mapsto t^k \partial^k_t v]\in \bE_{1,\mu}(T),\quad k\in \bN,$$
and  hence $v\in C^\infty((0,T); X_1).$
\vspace{2mm}
\item[\em (d)]
For any $T\in (0,T_+)$ it holds  that
$$v\in C^\infty((0,T)\times\sN;T\Sigma).$$
\item[\em (e)]
Suppose that for  some $\delta\in (0,T_+)$ and $\bar{\mu} \in (\mu, 1],$
$
v \in BC \left([\delta, T_+(v_0)); X_{\bar{\mu}-1/p,p} \right).
$
\\
Then $T_+=\infty$.

\end{itemize}
\end{proposition}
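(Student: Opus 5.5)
The plan is to treat \eqref{PE-abstract} as a semilinear evolution equation with a bilinear nonlinearity in the critical weighted $L_p$-framework. Then (a), (b) and (e) follow from the abstract theory of \cite[Theorem~2.1]{PSW18}, and the regularity assertions (c) and (d) follow from the parameter trick of Angenent, Pr\"uss and Simonett combined with Theorem~\ref{thm:highReg_hydrStokes}.

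For (a), (b) and (e), the first step is to verify the hypotheses of \cite[Theorem~2.1]{PSW18}. The linear hypothesis is maximal $L_{p,\mu}$-regularity of $\sA$ on $X_0$. This holds because, by Theorem~\ref{thm: H-infinity calculus}, $\lambda+\sA$ has a bounded $H^\infty$-calculus of angle $<\pi/2$ and $X_0$ is a closed subspace of $L_q$, hence UMD; this is exactly \eqref{def-S}. The nonlinear hypothesis is the bilinear estimate $\|\sF(v_1)-\sF(v_2)\|_{X_0}\le C(\|v_1\|_{X_\beta}+\|v_2\|_{X_\beta})\|v_1-v_2\|_{X_\beta}$ with $\beta=\tfrac12+\tfrac1{2q}$, which is Lemma~\ref{lem:bilinear}(b). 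The criticality condition in \cite{PSW18} reads $\beta\le \tfrac12(1+\mu-1/p)$, i.e.\ $\mu\ge 1/p+1/q$. The requirement $\mu\le1$ then forces $1/p+1/q\le1$, which is precisely our assumption.

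With these hypotheses, existence and uniqueness in $\bE_{1,\mu}(a)$ follow from a contraction argument on a short interval. The reference solution $\sS(0,v_0)$ absorbs the initial value, and the small-time smallness of $\|\cdot\|_{L_{2p,\eta}(X_\beta)}$ from \eqref{emb-L2pmu} gives the contraction, exactly as in the proof of Lemma~\ref{lem:F-cont}. The continuity statements follow from $\bE_{1,\mu}\hookrightarrow C([0,a];X_{\mu-1/p,p})$ and, away from $t=0$, from the unweighted embedding. For (b), I apply the implicit function theorem to
$$\Phi(u,u_0)=(\partial_t u+\sA u-\sF(u),\gamma u-u_0)$$
on $\bE_{1,\mu}(T)\times X_{\mu-1/p,p}$. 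Here $\Phi$ is $C^\infty$ by Lemma~\ref{lem:F-cont}, and $\partial_u\Phi$ is an isomorphism by \eqref{solution map}. For (e), the bound in the stronger space $X_{\bar\mu-1/p,p}$ yields a uniform lower bound on the existence time: the strict gap $\bar\mu>\mu$ makes the problem subcritical there. Restarting at times close to $T_+$ then extends the solution past $T_+$, which gives the contradiction.

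For (c), I introduce $v_\lambda(t)=v(\lambda t)$ for $\lambda$ near $1$. The pair $(v_\lambda,\lambda)$ solves
$$\partial_t u+\lambda\sA u=\lambda\sF(u),\qquad u(0)=v_0.$$
The implicit function theorem, again using \eqref{solution map} and the smoothness of $\sF$, shows that $\lambda\mapsto v_\lambda\in\bE_{1,\mu}(T)$ is $C^\infty$. Differentiating at $\lambda=1$ gives $t^k\partial_t^k v\in\bE_{1,\mu}(T)$, and hence $v\in C^\infty((0,T);X_1)$.

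The main obstacle is the spatial smoothness in (d). The plan is a bootstrap in $H^s_q$. For fixed $t>0$, write
$$(\lambda_0+\sA)v=\lambda_0v-\partial_tv+\sF(v).$$
By (c), $\partial_t^k v(t)\in X_1\subset H^2_q$ for every $k$. If $v(t)\in H^{s+2}_q$ with $s\ge0$, then Lemma~\ref{lem:bilinear}(a) (applied with $s+1-1/q$ in place of $s$) gives $\sF(v)\in H^{s+1-1/q}_q$. Theorem~\ref{thm:highReg_hydrStokes} then yields $v\in H^{s+3-1/q}_q$, a gain of $1-1/q>0$ derivatives per step. The same argument must be applied to each $\partial_t^kv$, whose equation contains the derivatives of $\sF(v)$; these are sums of the bilinear terms $F(\partial_t^iv,\partial_t^jv)$. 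Iterating in both $k$ and $s$ gives $v\in C^\infty((0,T);H^s_q)$ for every $s$, and Sobolev embedding then yields joint smoothness. The delicate point is bookkeeping the orders: at each stage the time derivatives have to be upgraded together, in a joint induction over $(k,s)$, and the pressure/projection terms are controlled by \eqref{eq:bdd_hydrHP_improved}.
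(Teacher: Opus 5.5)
Your arguments for (a)--(d) follow the paper's proof:
\begin{itemize}
\item (a) comes from the critical-space theorem with $\beta=\tfrac12+\tfrac1{2q}$ and Lemma~\ref{lem:bilinear}(b);
\item (b) comes from the implicit function theorem applied to $\Phi$, together with \eqref{solution map};
\item (c) comes from the time-rescaling parameter trick;
\item (d) comes from the bootstrap $v=(\lambda+\sA)^{-1}(\sF(v)+\lambda v-\partial_t v)$, gaining $1-1/q$ derivatives per step via Lemma~\ref{lem:bilinear}(a) and Theorem~\ref{thm:highReg_hydrStokes}, applied to all $\partial_t^n v$ at once.
\end{itemize}

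The genuine difference is in (e). The paper argues by compactness. Since $X_{\bar\mu-1/p,p}\hookrightarrow X_{\mu-1/p,p}$ is compact, the orbit $\{v(t)\}_{t\in[\delta,T_+)}$ is relatively compact in $X_{\mu-1/p,p}$. By (a) and (b), every point of its closure has a ball of initial data with a common existence time, and a finite subcover gives a uniform restart time.

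You instead use that $\bar\mu>\mu\ge\frac1p+\frac1q$ makes the problem strictly subcritical in the $\bar\mu$-weighted setting. Hence the existence time depends only on $\|u_0\|_{X_{\bar\mu-1/p,p}}$. This route needs no compactness and gives an explicit lower bound on the restart time. However, it rests on a quantitative local result that is not contained in (a), (b) or Lemma~\ref{lem:F-cont}, where the existence time depends on $u_0$ itself and not merely on its norm.

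Concretely, you must show that for every $R>0$ there is $\tau(R)>0$ such that the fixed-point argument closes on $[0,\tau(R)]$ for all $u_0$ with $\|u_0\|_{X_{\bar\mu-1/p,p}}\le R$. This follows from a factor $\tau^{\varepsilon}$ in the bound for the reference solution $\sS(0,u_0)$ in $L_{2p,\eta}((0,\tau);X_\beta)$, where $\varepsilon>0$ is produced by the gap $\bar\mu-(\frac1p+\frac1q)$. You also need the restarted solution to coincide with $v$. This follows from $\bE_{1,\bar\mu}(\tau)\hookrightarrow\bE_{1,\mu}(\tau)$ together with the uniqueness in (a).

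The paper's route reuses (a) and (b) at the price of the compact embedding, which is available because $\sN$ is compact. Your route trades compactness for an extra subcritical estimate that is standard but has to be supplied.
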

\begin{proof}
(a)  The assertion is a direct consequence of Theorem~\ref{thm: H-infinity calculus}, Lemma~\ref{lem:bilinear}, and \cite[Theorem~1.2]{PrWi17},
see also \cite[Theorem~2.1]{PSW18}.

\medskip\noindent
(b) Let $T\in (0,T_+)$ be given and let $v\in \bE_{1,\mu}(T)$ be the solution of ~\eqref{PE-abstract}.
We define the map
\begin{align*}
&\Phi : \bE_{1,\mu}(T)\times  X_{\mu-1/p,p}  \to \bE_{1,\mu}(T), \quad
\Phi (u, u_0)= (\partial_t u +\sA u -\sF(u), \gamma u -u_0).
\end{align*}
It follows that
$\Phi (u,u_0)=0$ iff $u\in \bE_{1,\mu}(T)$ is a solution of the problem
\begin{equation}
\label{PE-abstract-2}
\left\{\begin{aligned}
  \partial_t u  + \sA  u    &= \sF(u)      ,\\
u(0) & = u_0.
\end{aligned}\right.
\end{equation}
We notice that $\Phi(v,v_0)=0$, as $v$ solves~\eqref{PE-abstract-2} with initial value $v_0$.
It follows from Lemma~\ref{lem:F-cont} that
$$\Phi\in C^\infty \left( \bE_{1,\mu}(T)\times  X_{\mu-1/p,p}  , \bE_{0,\mu}(T)  \right).$$
Next, we observe that
$$
\partial_1 \Phi(v, v_0)= (\partial_t +\sA -\partial \sF(v), \gamma)\in \Lis( \bE_{1,\mu}(T), \bE_{0,\mu}(T)\times X_{1-1/p, p}),
$$
where we employed Lemma~\ref{lem:F-cont} for the last assertion.
The implicit function theorem implies that there exists some $\rho>0$ and a function
$
\psi\in C^\infty( B_{X_{\mu-1/p,p}}(  0,\rho) , \bE_{1,\mu}(T) ),
$
such that
$$
\Phi(\psi(u_0), u_0)=0,\quad u_0 \in B_{X_{\mu-1/p,p}}(  0,\rho).
$$
Hence, $\psi(u_0)=u(\cdot, u_0)$ is the unique solution of~\eqref{PE-abstract-2}, and the assertion follows.

\medskip\noindent
(c) Let $T\in (0,T_+)$ be fixed and choose   $\varepsilon>0$ so small that $(1+\varepsilon)T < T_+$.

\smallskip\noindent
For a function $u\in C([0,T_+); X)$ we set  $u_\lambda(t)= u(\lambda t), \ t\in [0,T], $
and we define
\begin{equation*}
\begin{aligned}
&\Psi: (1-\varepsilon, 1+\varepsilon) \times \bE_{1,\mu}(T) \to  \bE_{0,\mu}(T) \times X_{\mu-1/p,p} , \\
&\Psi(\lambda, u) := \left( \partial_t u - \lambda \sA u -\lambda \sF(u)   ,  u(0)- v_0 \right).
\end{aligned}
\end{equation*}
Thanks to Lemma~\ref{lem:F-cont} one readily verifies that
$$
\Psi\in C^\infty \left( (1-\varepsilon, 1+\varepsilon) \times \bE_{1,\mu}(T) ,  \bE_{0,\mu}(T) \times X_{\mu-1/p,p} \right).
$$

\medskip\noindent
By uniqueness of solutions, we have $\Psi(\lambda, v_\lambda)=(0,0)$, where $v$ is the solution to \eqref{PE-abstract}, defined on $[0, T_+)$.
It follows from Lemma~\ref{lem:F-cont} that
$$
\partial_2 \Psi ( 1, v) \in \Lis (\bE_{1,\mu}(T), \bE_{0,\mu}(T) \times X_{\mu-1/p,p}).
$$
The implicit function theorem then implies that there exists some $\hat{\varepsilon} \in (0,\varepsilon)$ and a function
$$\psi\in C^1((1- \hat{\varepsilon} , 1+ \hat{\varepsilon}), \bE_{1,\mu}(T))$$
such that $\psi(\lambda)= v_\lambda$. Further, the differentiability of $\psi$ implies
$$
\Big(\frac{d}{d\lambda} \psi \Big|_{\lambda=1}\Big)^k = t^k \partial^k_t v \in \bE_{1,\mu}(T), \quad k\in \bN.
$$
Let $\delta\in (0,T)$ be given. It follows that $v\in H^k_p((\delta,T); X_1)$ for any $k\in\bN$. The last assertion in (c) is then a consequence of  Sobolev embedding and the fact
that $\delta\in (0,T)$ can be chosen arbitrarily.


\noindent
\medskip\noindent
(d) We rewrite \eqref{PE-abstract} pointwise for any $t\in (0,T)$ as
$$v=(\lambda+\sA)^{-1}(\sF(v)+\lambda v-\partial_t v),$$
where $\lambda>0$ is sufficiently large so that Theorem \ref{thm:highReg_hydrStokes} is available. By assertion (c) and by Lemma \ref{lem:bilinear} (a) this yields
\begin{equation}
\label{eq:spatial_reg}
\partial_t^nv=(\lambda+\sA)^{-1}(\partial_t^n\sF(v)+\lambda \partial_t^nv-\partial_t^{n+1} v),
\end{equation}
for any $n\in\bN$. Since, by assertion (c), $\partial_t^k v\in H_{q,\bar{\sigma}}^{2}(\sN;T\Sigma)$ for any $k\in\bN$, Lemma \ref{lem:bilinear} (a) with $s=2-(1+1/q)$ implies
$$\partial_t^n \sF(v)\in H_{q,\bar{\sigma}}^{2-(1+1/q)}(\sN;T\Sigma)=H_{q,\bar{\sigma}}^{1-1/q}(\sN;T\Sigma).$$
This yields
$$(\partial_t^n\sF(v)+\lambda \partial_t^nv-\partial_t^{n+1} v)\in H_{q,\bar{\sigma}}^{1-1/q}(\sN;T\Sigma),$$
hence
$$\partial_t^n v\in H_{q,\bar{\sigma}}^{3-1/q}(\sN;T\Sigma),$$
for any $n\in\bN$, by \eqref{eq:spatial_reg} and Theorem \ref{thm:highReg_hydrStokes}. This in turn implies
$$\partial_t^n \sF(v)\in H_{q,\bar{\sigma}}^{2-2/q}(\sN;T\Sigma),$$
by Lemma \ref{lem:bilinear} (a) with $s=2-2/q$, hence
$$\partial_t^n v\in H_{q,\bar{\sigma}}^{4-2/q}(\sN;T\Sigma),$$
for any $n\in\bN$, by \eqref{eq:spatial_reg} and Theorem \ref{thm:highReg_hydrStokes}. Iteratively, this yields
$$\partial_t^n v\in H_{q,\bar{\sigma}}^{k+2-k/q}(\sN;T\Sigma),$$
for all $k,n\in\bN$. For arbitrary $n,\ell\in\mathbb{N}$, it follows from Sobolev embeddings that the mapping
$$(0,T)\times\sN\ni(t,x)\mapsto (\nabla^{\sN})^\ell \partial_t^nv$$
is continuous, provided that $k+2-(k+3)/q>\ell$ or equivalently $k>\frac{1}{q-1}\left(q(\ell-2)+3\right)$. This readily implies $v\in C^\infty((0,T)\times\sN;T\Sigma)$.

\medskip\noindent
(e)
Assume {on the contrary that} $T_+(v_0)<\infty$.
Because of compactness of the embedding
$$
X_{\bar{\mu}-1/p,p} \hookrightarrow X_{\mu-1/p,p} ,
$$
the closure of $\{v(t)\}_{t\in [\delta,T_+(v_0))}$ in $X_{\mu-1/p,p}$, denoted by $\cV$, is compact in $X_{\mu-1/p,p}$.
It follows from parts (a) and  (b) that for any $z_0\in \cV$, there exists some $\rho(z_0)>0$ and $\tau(z_0)>0$ such that for every $u_0\in B_{X_{\mu-1/p,p}}(v_0, \rho (z_0))$,
the solution of \eqref{PE-abstract} with initial value  $u_0$ exists on $[0,\tau(z_0)]$. By  compactness of $\cV$, we can find finitely many $z^k_0\in \cV$, {$k=1,2,\ldots, N$}, such that
$$
\cV \subset \bigcup\limits_{k=1}^N  B_k, \quad B_k=B_{X_{\mu-1/p,p}}(z^k_0, \rho_k), \quad \rho_k=\rho(z^k_0),
$$
and the solution to \eqref{PE-abstract} with initial value $u_0\in B_k$ exists on $[0,\tau_k]$ with $\tau_k =\tau(z^k_0)$.

Let $\tau=\min\{\tau_k :\, k=1,\ldots , N \}$.
Pick $s \in [\delta, T_+(v_0))$ such that $T_+(v_0) - s< \tau$ and consider the problem

\begin{equation}
\label{PE-abstract-4}
\left\{\begin{aligned}
  \partial_t u + \sA u    &= \sF (u)       ,\\
u(0) & = v(s) .
\end{aligned}\right.
\end{equation}
There exists $k\in \{1,\ldots, N\}$ such that $v(s)\in B_k$. Therefore, the solution of \eqref{PE-abstract-4} exists at least on the interval $[0,\tau]$.
The uniqueness part in (a) shows that $u(t)= v(s+t)$ for $t\in [0,\tau]\cap[0, T_+(v_0)-s)$.
However, in view of the condition $T_+(v_0) - s< \tau $,  $v$ can be extended beyond $T_+(v_0)$. A contradiction.
Therefore, $v$ exists globally, i.e., $T_+(v_0)=\infty$.

\end{proof}

\begin{remark}
\label{rem: pressure}
The pressure $\pi_s$ in \eqref{PE} may be reproduced as follows. We first observe that if $v\in C^\infty((0,T_+)\times\sN;T\Sigma)$ is a solution of \eqref{PE-abstract}, then, in particular,
$$\partial_t v - \Ph ( \Delta_\sN  +	 \Ric  )v     = - \Ph \left( \nabla_{v} v    + w (v) \partial_r  v  \right),$$
by definition of $(\sA,\sF)$. With
$$G(v):=\nabla_{v} v    + w (v) \partial_r  v-(\Delta_\sN  +	 \Ric  )v\in C^\infty((0,T_+)\times\sN;T\Sigma)$$
it follows that 
$$0=\partial_t v+\Ph G(v)=\partial_tv+G(v)+(\Ph-I)G(v)=\partial_tv+G(v)+(\PH-I)\overline{G(v)}$$
by \eqref{I-Ph}. Then, by \eqref{def-PH} it holds that
$$(\PH-I)\overline{G(v)}=-\wgd\psi_{\overline{G(v)}},$$
where $\psi_{\overline{G(v)}}$ solves
$$\Delta_B\psi_{\overline{G(v)}}=\wdv \overline{G(v)}\in C^\infty((0,T_+)\times\Sigma),$$
since $\wdv\PH\overline{G(v)}=0$.
Setting $\pi_s:=-\psi_{\overline{G(v)}}$ shows that $(v,\pi_s)$ solves \eqref{PE}. In addition $\pi_s$ is smooth. Indeed, without loss of generality, we may assume that $\int_\Sigma\pi_s d\mu_g=0$, hence $\|\Delta_B\cdot\|_{\hat{H}_q^s(\Sigma)}$ is an equivalent norm in $\hat{H}_q^{2+s}(\Sigma)$, where
$$\hat{H}_q^s(\Sigma):=\left\{u\in H_q^s(\Sigma):\int_\Sigma u d\mu_g=0\right\},\quad q\in (1,\infty),\ s\ge 0.$$
Since
$$-\Delta_B\partial_t^n\pi_s=\wdv\partial_t^n \overline{G(v)}$$
and $\int_\Sigma\partial_t^n\pi_s d\mu_g=0$ for any $n\in\bN$, it follows readily that $\pi_s\in C^\infty((0,T_+)\times\Sigma)$.
\end{remark}

\section{Global Existence}\label{S: global existence}
\noindent
 In this section, we show that the primitive equations \eqref{PE} admit global strong solutions.
Our approach follows closely the strategy employed in \cite{HiKa16,GHK17, GGHHK20} for a Euclidean setting.
The more complicated geometry of $\Sigma$ considered in this manuscript introduces substantial additional difficulties into the analysis.

\medskip\noindent
We remind that  given $f\in C(\sN;T\Sigma)$,  $\tilde{f}=f- \overline{f}$.
Assume that $v$ solves \eqref{PE}. Then one shows that   $\overline{v}$ and $\tilde{v}$ satisfy
\begin{equation}
\label{PE-avg}
\begin{split}
  \partial_t \overline{v}
-  \big(\Delta_\Sigma   +  \Ric_\Sigma   \big)  \overline{v}   + \gd_\Sigma  \pi_s    = -\nabla_{\overline{v}} \overline{v} - \avint_{-h}^0 \left[\nabla_{\tilde{v}} \tilde{v}  + \tilde{v}\, \wdv \tilde{v}  \right]\, dr  - \frac{1}{h} \partial_r v |_{\Sigma_b}     ,
\end{split}
\end{equation}
and
\begin{equation}
\label{PE-oscillation}
\begin{split}
\partial_t \tilde{v} +     w \partial_r \tilde{ v} -  \big( \Delta_\sN    +  \Ric   \big) \tilde{v}    &+   \nabla_{\tilde{v}} \tilde{v}  +\nabla_{\overline{v}} \tilde{v}
 =     \avint_{-h}^0 \left[\nabla_{\tilde{v}} \tilde{v}  + \tilde{v} \,\wdv \tilde{v}  \right]\, dr  - \nabla_{\tilde{v}} \overline{v}     + \frac{1}{h} \partial_r v |_{\Sigma_b}     ,
\end{split}
\end{equation}
respectively.

\medskip\noindent
Throughout this section, $ B_j:  \overline{(\bR_+)^2} \to \bR_+$  denote continuous functions with the property that
$$ \lim\limits_{x,y\to 0^+} B_j(x,y)=0, \quad j\in \bN.$$

\subsection{A priori Estimates}\label{S:a priori est}

\begin{lemma}\label{lem:Stokes-H}
For $T\in  (0,\infty)$   and  $f\in L_2((0,T); H^1_2(\Sigma; T\Sigma))$,  let $(v,\pi)$ be a solution  of
\begin{equation}
\label{horizontal-eq}
\left\{\begin{aligned}
  \partial_t v  - \Delta_\Sigma    v  - \Ric  v  + \gd_\Sigma  \pi     &= f     &&\text{in}&&\Sigma ,\\
\dv_\Sigma v & = 0 &&\text{in}&&\Sigma.\\
\end{aligned}\right.
\end{equation}
Then there exists a positive constant $C_\Sigma$ such that
$$
   20\,  \partial_t \| \nabla v \|_{L_2(\Sigma)}^2 +   \| \Delta_\Sigma v \|_{L_2(\Sigma)}^2 +  \| \partial_t v \|_{L_2(\Sigma)}^2+  \| \wgd \pi \|_{L_2(\Sigma)}^2  \leq C_\Sigma  \left( \| f \|_{L_2(\Sigma)}^2 +   \| v  \|_{L_2(\Sigma)}^2 \right)
$$
for a.e. $t\in (0,T)$.
\end{lemma}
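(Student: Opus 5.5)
The strategy is to test \eqref{horizontal-eq} with $-\Delta_\Sigma v$ and with $\partial_t v$, and to bound the pressure gradient through the Helmholtz structure. Throughout, $v$ is a (strong) solution with $\Delta_\Sigma v,\partial_t v,\wgd\pi\in L_2(\Sigma)$ for a.e.\ $t$. We use the commutator identities proved above in Proposition~\ref{pro: properties of lambda-A}: $\wDel\wgd\phi=\wgd\Delta_B\phi+\Ric\,\wgd\phi$ and $\wdv\wDel u=\Delta_B\wdv u+\wdv(\Ric\,u)$, together with the fact that $\Ric=\kappa g$ has smooth bounded coefficients on the compact surface $\Sigma$.

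\emph{Step 1 (pressure).} Apply $\wdv$ to \eqref{horizontal-eq}$_1$. Since $\wdv v=0$ and $\wdv\partial_t v=0$, the commutator identity gives
\[
\wdv(\wDel v+\Ric v)=\Delta_B\wdv v+2\wdv(\Ric v)=2\wdv(\Ric v).
\]
Hence $\Delta_B\pi=\wdv f+2\wdv(\Ric v)$ weakly, so $\wgd\pi=(I-\PH)(f+2\Ric v)$. Boundedness of $\PH$ on $L_2$ then yields
\[
\|\wgd\pi\|_{L_2(\Sigma)}\le C\bigl(\|f\|_{L_2(\Sigma)}+\|v\|_{L_2(\Sigma)}\bigr).
\]
Applying $\PH$ to the equation likewise shows $\partial_t v-\PH(\wDel+\Ric)v=\PH f$. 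In particular $\partial_t v-\wDel v$ equals $f+\Ric v-\wgd\pi$, which is controlled in $L_2$ by $\|f\|+\|v\|$.

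\emph{Step 2 (energy identities).} Test with $-\wDel v$. Integration by parts (Green's formula \eqref{Green-5}) gives $(\partial_t v,-\wDel v)_\Sigma=\tfrac12\partial_t\|\nabla v\|^2_{L_2(\Sigma)}$. Therefore
\[
\tfrac12\partial_t\|\nabla v\|^2+\|\wDel v\|^2=-(f+\Ric v-\wgd\pi,\wDel v)_\Sigma .
\]
Bound the right-hand side by Young's inequality, absorbing $\tfrac12\|\wDel v\|^2$ and inserting Step~1. This gives
\[
\partial_t\|\nabla v\|^2+\|\wDel v\|^2\le C(\|f\|^2+\|v\|^2).
\]
For the time derivative, $\|\partial_t v\|\le\|\wDel v\|+\|f+\Ric v-\wgd\pi\|$, which is again controlled by the previous display together with Step~1. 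Summing these estimates with suitable weights, and multiplying the $\partial_t\|\nabla v\|^2$ inequality by a fixed factor so that the coefficient $20$ in front of $\partial_t\|\nabla v\|^2$ appears, gives the claim with a constant $C_\Sigma$ depending only on $\Sigma$ through $\|\Ric\|_\infty$ and $\|\PH\|_{\cL(L_2)}$.

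\emph{Main obstacle.} The delicate point is Step~1: one must check that $\wDel$ preserves solenoidality only up to the Ricci term, so that the pressure carries no second-order contribution of $v$. This is precisely where the commutator identity \eqref{commutator-divergence} enters, and it must be justified weakly by a density argument for $v\in H^2_2$. A second point is that the coefficient $20$ is not intrinsic. One possibility is that the paper uses $\nabla v$ together with a weighted combination of the test functions. My plan is to obtain the inequality first with some coefficient $c\,\partial_t\|\nabla v\|^2$ and then rescale. Note that a larger coefficient on a derivative term cannot be obtained for free, since the derivative term may have either sign. To handle this, I will instead test with $-20\wDel v$ and use the resulting dissipation $20\|\wDel v\|^2$ to absorb all other terms, with $C_\Sigma$ chosen accordingly.
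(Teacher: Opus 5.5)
Your argument is correct, but it takes a different route from the paper's. The paper tests \eqref{horizontal-eq}$_1$ with $\Delta_\Sigma v$ and uses \eqref{commutator-divergence} only to rewrite the pressure contribution $-\int_\Sigma \pi\,\wdv\Delta_\Sigma v\,d\mu_g$ as $\int_\Sigma\langle\wgd\pi,\Ric v\rangle_g\,d\mu_g$, which it bounds by $\frac{\varepsilon}{2}\|\wgd\pi\|^2_{L_2(\Sigma)}$ plus lower-order terms. It then tests separately with $\partial_t v$, where the pressure drops out, to control $\|\partial_t v\|^2_{L_2(\Sigma)}$. Next it estimates $\|\wgd\pi\|^2_{L_2(\Sigma)}$ from the equation itself in terms of $\|\partial_t v\|^2+\|\Delta_\Sigma v\|^2+\|v\|^2+\|f\|^2$, and closes by choosing $5\varepsilon=1/2$; the factor $20$ is simply what this bookkeeping produces. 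You instead use the same commutator identity to solve for the pressure outright. The identity $\Delta_B\pi=\wdv f+2\wdv(\Ric v)$ holds weakly, by exactly the computation in \eqref{commutator-Ricci}, so $\wgd\pi=(I-\PH)(f+2\Ric v)$. Since $I-\PH$ is an $L_2$-orthogonal projection, this gives $\|\wgd\pi\|_{L_2(\Sigma)}\le\|f\|_{L_2(\Sigma)}+2\|\Ric\|_\infty\|v\|_{L_2(\Sigma)}$ pointwise in time. After that, a single test with $-\Delta_\Sigma v$ and reading $\partial_t v$ off the equation suffice. Multiplying the resulting inequality by $20$ and adding $\|\partial_t v\|^2_{L_2(\Sigma)}\le 2\|\Delta_\Sigma v\|^2_{L_2(\Sigma)}+C(\|f\|^2_{L_2(\Sigma)}+\|v\|^2_{L_2(\Sigma)})$ together with the pressure bound leaves $18\|\Delta_\Sigma v\|^2_{L_2(\Sigma)}$ on the left. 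So the coefficient $20$ is obtained exactly as you propose, and your concern about it is harmless. Your route is shorter and avoids the $\varepsilon$-absorption. It also gives the sharper structural statement that on a closed surface the pressure is driven only by $f$ and the curvature term, since $\wdv(\Ric v)=\langle\wgd\kappa,v\rangle_g$, with no second-order dependence on $v$. The paper's route instead follows the standard two-test-function energy template from the Euclidean setting, in which the pressure is estimated a posteriori through the equation rather than through the Helmholtz projection.
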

\begin{proof}
Multiplying \eqref{horizontal-eq}$_1$ by $\Delta_\Sigma v$ and employing ~\eqref{div-scalar-Sigma}, \eqref{Green-5} and \eqref{commutator-divergence},
we obtain
\begin{align*}
\frac{1}{2} \partial_t \| \nabla v \|_{L_2(\Sigma)}^2 + \| \Delta_\Sigma    v  \|_{L_2(\Sigma)}^2
& = -( \Ric v + f , \Delta_\Sigma v )_\Sigma  - \int_\Sigma  \pi \wdv \Delta_\Sigma v \, d\mu_g     \\
&=  -( \Ric v + f , \Delta_\Sigma v )_\Sigma - \int_\Sigma  \pi \left[ \Delta_B \wdv v + \wdv (\Ric  v) \right]\, d \mu_g   \\
&= -( \Ric v + f , \Delta_\Sigma v )_\Sigma +  \int_\Sigma (\wgd \pi) \Ric  v \, d \mu_g    \\
&\leq \frac{\varepsilon}{2} \| \wgd \pi \|_{L_2(\Sigma)}^2 + \frac{1}{2}\| \Delta_\Sigma    v  \|_{L_2(\Sigma)}^2  \\
&\qquad+ {\frac{C_\Sigma(\varepsilon)}{2}} \| v \|_{L_2(\Sigma)}^2 + {\frac{3}{4} }\| f \|_{L_2(\Sigma)}^2,
\end{align*}
where $\varepsilon>0$ is a number that will be determined later.  Absorbing the term $\frac{1}{2}\| \Delta_\Sigma    v  \|_{L_2(\Sigma)}^2$ and multiplying  by 2 yields
\begin{equation}
\label{eq:est1}
\partial_t \| \nabla v \|_{L_2(\Sigma)}^2 + \| \Delta_\Sigma    v  \|_{L_2(\Sigma)}^2\le \varepsilon \| \wgd \pi \|_{L_2(\Sigma)}^2 + C_\Sigma(\varepsilon) \| v \|_{L_2(\Sigma)}^2 + {\frac{3}{2} }\| f \|_{L_2(\Sigma)}^2.
\end{equation}
Next, we multiply \eqref{horizontal-eq}$_1$ by $v_t$ and integrate over $\Sigma$ to obtain
\goodbreak
\begin{align*}
   \| v_t \|_{L_2(\Sigma)}^2 + \frac{1}{2} \partial_t \| \nabla v \|_{L_2(\Sigma)}^2 & = ( f+\Ric v , v_t )_\Sigma \\
&\leq \frac{1}{2}  \| v_t \|_{L_2(\Sigma)}^2 +  {\frac{C_\Sigma (\varepsilon) } {2} } \| v \|_{L_2(\Sigma)}^2 + {\frac{3}{4} } \| f \|_{L_2(\Sigma)}^2,
\end{align*}
which implies
\begin{align}
\label{eq:est2}
   \| v_t \|_{L_2(\Sigma)}^2 +\partial_t \| \nabla v \|_{L_2(\Sigma)}^2  \leq    C_\Sigma (\varepsilon) \| v \|_{L_2(\Sigma)}^2 
   + {\frac{3}{2} } \| f \|_{L_2(\Sigma)}^2 .
\end{align}
Adding \eqref{eq:est1} and \eqref{eq:est2}, we arrive at
\begin{equation}
\label{est-v-avg-int}
 2\,\partial_t \| \nabla v \|_{L_2(\Sigma)}^2 + \| \Delta_\Sigma    v  \|_{L_2(\Sigma)}^2  +  \| v_t \|_{L_2(\Sigma)}^2   \leq  \varepsilon \| \wgd \pi \|_{L_2(\Sigma)}^2  +  C_\Sigma(\varepsilon) \| v \|_{L_2(\Sigma)}^2 + 3 \| f \|_{L_2(\Sigma)}^2 .
\end{equation}
We can use \eqref{horizontal-eq}$_1$  to estimate $\wgd \pi$ as follows:
\begin{align*}
 \| \wgd \pi \|_{L_2(\Sigma)}^2  \leq 4\left(\| v_t \|_{L_2(\Sigma)}^2 + \| \Delta_\Sigma    v  \|_{L_2(\Sigma)}^2 + C_\Sigma\| v \|_{L_2(\Sigma)}^2 +   \| f \|_{L_2(\Sigma)}^2\right).
\end{align*}
Combining the last estimate with \eqref{est-v-avg-int} yields
\begin{equation*}
\begin{aligned}
 10\,\partial_t \| \nabla v \|_{L_2(\Sigma)}^2 + \| \Delta_\Sigma    v  \|_{L_2(\Sigma)}^2  & +  \| v_t \|_{L_2(\Sigma)}^2  +(1-5\varepsilon)\| \wgd \pi \|_{L_2(\Sigma)}^2 \\
 & \le  C_\Sigma (\varepsilon)( \| v \|_{L_2(\Sigma)}^2 +  \| f \|_{L_2(\Sigma)}^2) .
\end{aligned}
\end{equation*}
Choosing  $5\varepsilon=1/2$ yields the assertion.
\end{proof}


\begin{lemma}\label{lem:basic-L2-est}
 Suppose $v_0\in H^1_{2, \bar{\sigma}} (\sN;T\Sigma)$ satisfies $v_0=0$  on $\Sigma_b$.
Let $v\in \bE_{1,1}(T)$ be the solution to \eqref{PE} with $p=q=2$ asserted by Proposition~\ref{prop:local existence},
defined on an interval $[0,T]$.  Then
$$
\| v (t) \|_{L_2(\sN)}^2 + 2\int_0^t   \|  \nabla^\sN v (\tau) \|_{L_2(\sN)}^2 \, d\tau \leq B_0( \|v_0 \|_{L_2(\sN)} , T),
$$
for each $ t\in [0,T]$.
\end{lemma}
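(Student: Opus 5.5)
We will prove the basic $L_2$ energy estimate by testing the momentum equation in \eqref{PE} with $v$ itself and integrating over $\sN$. By Proposition~\ref{prop:local existence} the solution is smooth for $t>0$. We therefore carry out all computations on $[\delta,t]$ with $\delta>0$, and let $\delta\to0$ at the end using $v\in C([0,T];L_2)$ and monotone convergence for the gradient integral. With $p=q=2$ and $\mu=1$, the initial data space $X_{1/2,2}$ matches the assumptions on $v_0$.

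The first step is to show that the nonlinear terms and the pressure drop out.

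For the pressure, $\gd_\Sigma\pi_s$ is independent of $r$, so
\[
(\gd_\Sigma\pi_s,v)_\sN = h(\gd_\Sigma\pi_s,\bar v)_\Sigma = -h(\pi_s,\wdv\bar v)_\Sigma = 0
\]
by $\eqref{PE}_2$.

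For the transport terms, metric compatibility of $\nabla$ gives
\[
\la\nabla_v v,v\ra_g=\tfrac12 v(|v|_g^2), \qquad \la w\partial_r v,v\ra_g=\tfrac12 w\,\partial_r|v|_g^2 .
\]
Integrating $\tfrac12 v(|v|_g^2)$ by parts on $\Sigma$ for fixed $r$ yields $-\tfrac12\int_\sN \wdv v\,|v|_g^2$. Integrating $\tfrac12 w\,\partial_r|v|_g^2$ by parts in $r$, and using $w=0$ on $\Sigma_u\cup\Sigma_b$, yields $-\tfrac12\int_\sN \partial_r w\,|v|_g^2$. By $\eqref{PE}_3$ we have $\partial_r w=-\wdv v$, so the two contributions cancel exactly.

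For the viscous term, Green's formula on $\sN$ (cf.\ \eqref{Green-5} together with the $r$-integration) gives
\[
-(\Delta_\sN v,v)_\sN=\|\nabla^\sN v\|_{L_2(\sN)}^2 .
\]
The boundary term $\int_\Sigma\la\partial_r v,v\ra_g\big|_{-h}^{0}$ vanishes because $\partial_r v=0$ on $\Sigma_u$ and $v=0$ on $\Sigma_b$.

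This leads to the identity
\[
\tfrac12\partial_t\|v\|_{L_2(\sN)}^2+\|\nabla^\sN v\|_{L_2(\sN)}^2=(\Ric\, v,v)_\sN\le \|\kappa\|_{L_\infty(\Sigma)}\|v\|_{L_2(\sN)}^2,
\]
where we used $\Ric=\kappa g$. Multiplying by $2$ and applying Gronwall's inequality gives
\[
\|v(t)\|_{L_2(\sN)}^2+2\int_0^t\|\nabla^\sN v\|_{L_2(\sN)}^2\,d\tau\le e^{2\|\kappa\|_\infty T}\|v_0\|_{L_2(\sN)}^2=:B_0(\|v_0\|_{L_2(\sN)},T).
\]
To obtain this in the stated form, integrate the differential inequality for $\|v\|^2$ first, then reinsert the resulting bound into the integrated identity to control the gradient term. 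This $B_0$ is continuous and vanishes as its arguments tend to $0$.

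The main obstacle is justifying the cancellation of the nonlinear term rigorously, in particular the regularity needed near $t=0$ and the handling of $w$. Since $w(v)\in H^1$ in $r$ with values in $L_2(\Sigma)$, I would do the computation for $t\ge\delta$, where everything is smooth, and pass to the limit. Otherwise one can use Lemma~\ref{lem:bilinear} to see that each term is integrable for $v\in\bE_{1,1}(T)$.
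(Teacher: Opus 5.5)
Your proposal is correct and follows the paper's own argument. You test with $v$, kill the pressure via $\wdv\bar v=0$, and cancel the transport terms by exactly the computation the paper packages as Proposition~\ref{pro: divergence-free} with $z=v$, $q=2$. Green's identity with the boundary conditions gives the $\|\nabla^\sN v\|_{L_2(\sN)}^2$ term, the Ricci term is bounded by $C\|v\|_{L_2(\sN)}^2$, and Gronwall closes. Your $\delta\to 0$ justification and the explicit constant $e^{2\|\kappa\|_\infty T}$ are harmless refinements of the paper's $e^{CT}$.
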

\begin{proof}
Multiplying both sides of \eqref{PE}$_1$ by $v$ and using \eqref{div-scalar-Sigma},  \eqref{Green-2},
the fact that $\pi_s$ does not depend on $r$, and $\eqref{PE}_2$  yields
\begin{align*}
{ \frac{1}{2} \partial_t \| v\|_{L_2(\sN)}^2 + \|  \nabla^\sN  v \|_{L_2(\sN)}^2 } & =  ( \Ric \, v, v)_\sN +  \int_{\Sigma} \pi_s \int_{-h}^0 \wdv v  \, dr  \, d\mu_g \\
& =  ( \Ric \, v, v)_\sN \ \leq C  \| v\|_{L_2(\sN)}^2 .
\end{align*}
Here we also utilized Proposition~\ref{pro: divergence-free} with $z=v$, $q=2$.
Then Gronwall's inequality implies
\begin{align*}
 \| v (t) \|_{L_2(\sN)}^2 + 2\int_0^t \|  \nabla^\sN  v (\tau) \|_{L_2(\sN)}^2 \, d\tau &\leq    e^{CT} \|v_0 \|_{L_2(\sN)}^2   ,
\end{align*}
for each $t\in [0,T]$. This completes the proof.
\end{proof}

}Before stating the next result, which addresses the a priori estimates for the solutions of \eqref{PE}, let us include the following remark. If $v_0\in  H^1_{2, \bar{\sigma}} (\sN;T\Sigma)$ satisfies $v_0=0$ on $\Sigma_b$, then, by Proposition \ref{prop:local existence} with $p=q=2$, the unique solution $v$ to \eqref{PE} satisfies $v \in C^\infty((0,T)\times\sN;T\Sigma)$ for each $T\in (0,T_+(v_0))$. In particular, for any $\tau\in (0,T)$, it holds that $v(\tau)\in H_{2,\bar{\sigma}}^{2}(\sN;T\Sigma)$. By applying the time-shift $t\mapsto v(t+\tau)$, we may therefore always assume without loss of generality that $v_0\in H_{2,\bar{\sigma}}^2(\sN;T\Sigma)$ with the compatibility conditions $v_0=0$ on $\Sigma_b$ and $\partial_r v_0=0$ on $\Sigma_u$. Moreover, $v$ and $\pi_s$ in \eqref{PE} may be assumed to be as smooth as desired on $[0,T)\times \sN$ and $[0,T)\times\Sigma$, respectively, including the initial time $t=0$.

\begin{theorem}
\label{thm: a priori est}
Suppose  $v_0\in  H^2_{2, \bar{\sigma}} (\sN;T\Sigma)$ satisfies  $v_0=0$  on $\Sigma_b$ and $\partial_r v_0=0$ on $\Sigma_u$. 
Then the unique solution $v$ to \eqref{PE} with $p=q=2$ asserted by Proposition~\ref{prop:local existence}, satisfies
$$
\|v\|_{  L_\infty((0,T);H^2_2(\sN))} \leq B_1 ( \| v_0\|_{H^2_2(\sN)},   T).
$$
\end{theorem}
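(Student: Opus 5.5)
We will follow the Cao--Titi strategy in the form used in \cite{HiKa16, GHK17, GGHHK20}, adapted to the geometry of $\Sigma$, and work throughout with the decomposition $v=\bar v+\tilde v$ and the equations \eqref{PE-avg} and \eqref{PE-oscillation}. Since the remark before the theorem lets us assume $v$ is as smooth as needed up to $t=0$, all integrations by parts are justified. The starting point is the basic energy bound of Lemma~\ref{lem:basic-L2-est}. It controls $v$ in $L_\infty(L_2)\cap L_2(H^1_2)$ by $B_0(\|v_0\|_{L_2},T)$.

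\textbf{Step 1: $L_4$ bound for $\tilde v$.} We multiply \eqref{PE-oscillation} by $|\tilde v|_g^2\tilde v$ and integrate over $\sN$. The transport terms $w\partial_r\tilde v+\nabla_{\bar v}\tilde v+\nabla_{\tilde v}\tilde v$ are combined, using $\eqref{PE}_3$ and the divergence identities of Appendix~\ref{S: Appendix A}, into a divergence form. They then vanish up to a term $\int|\tilde v|^4\wdv\bar v$, which is zero. The diffusion gives $\int|\tilde v|^2|\nabla^\sN\tilde v|^2$ and the Ricci term is bounded by $C\|\tilde v\|_{L_4}^4$. Two kinds of terms remain. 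The first is $\nabla_{\tilde v}\bar v$ together with the vertical average terms, which are handled by H\"older and the two-dimensional Ladyzhenskaya inequality on $\Sigma$, with $\|\nabla\bar v\|_{L_2}$ as an integrable-in-time weight. The second is the boundary term $h^{-1}\partial_rv|_{\Sigma_b}$, which is treated via trace estimates and $\|\nabla^\sN v\|_{L_2}$. Gronwall then yields $\tilde v\in L_\infty(L_4)$ and $|\tilde v||\nabla^\sN\tilde v|\in L_2(L_2)$, with bounds depending only on $B_0$.

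\textbf{Step 2: $H^1$ bound for $\bar v$.} We apply Lemma~\ref{lem:Stokes-H} to \eqref{PE-avg}, with $f=-\nabla_{\bar v}\bar v-\avint(\nabla_{\tilde v}\tilde v+\tilde v\wdv\tilde v)-h^{-1}\partial_rv|_{\Sigma_b}$. The average term is bounded by $\||\tilde v||\nabla\tilde v|\|_{L_2}$, which is controlled by Step 1. The term $\nabla_{\bar v}\bar v$ is handled by Ladyzhenskaya in the form $\|\bar v\|_{L_4}^2\|\nabla\bar v\|_{L_4}^2\lesssim\|\bar v\|\|\nabla\bar v\|^2\|\Delta_\Sigma\bar v\|$, and the resulting $\|\Delta_\Sigma\bar v\|$ is absorbed. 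This gives $\partial_t\|\nabla\bar v\|^2+\dots\le g(t)\|\nabla\bar v\|^2+\dots$ with $g\in L_1(0,T)$.

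\textbf{Step 3: the boundary term, the main obstacle.} The hardest point is the boundary term $h^{-1}\partial_rv|_{\Sigma_b}$, which is absent in the periodic/Neumann Euclidean case. Its $L_2(\Sigma)$ norm requires essentially $H^{3/2}$ regularity of $v$. We will try to control it by interpolation, $\|\partial_rv|_{\Sigma_b}\|_{L_2}\lesssim\|\partial_r v\|_{L_2}^{1/2}\|\partial_r v\|_{H^1}^{1/2}$, together with a prior bound on $\partial_r v$. That bound would come from an intermediate estimate: testing \eqref{PE}$_1$ with $-\partial_r^2v$. Under the boundary conditions on $\Sigma_u$ and $\Sigma_b$ the pressure drops out, and the nonlinearity is treated as in \cite{GGHHK20} using the $L_4$ bound. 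If this fails, we would first estimate $\partial_r v$ in $L_\infty(L_2)$ via the equation for $u=\partial_rv$, whose boundary data are $u=0$ on $\Sigma_u$ and $\partial_r u$ given by the equation on $\Sigma_b$.

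\textbf{Step 4: full estimates and upgrade to $H^2$.} Next we establish the full $H^1$ estimate, testing with $-\Delta_\sN v$ and $\partial_tv$ as in Lemma~\ref{lem:Stokes-H}. Here Proposition~\ref{pro: properties of lambda-A}(b) and \eqref{commutator-divergence} are used to kill the pressure. We then differentiate in time: $u=\partial_tv$ satisfies a linearized system, and an energy estimate gives $\partial_tv\in L_\infty(L_2)$, using the $L_\infty(H^1)\cap L_2(H^2)$ bound and $\partial_tv(0)$ computed from $v_0\in H^2$. Finally, elliptic regularity for the hydrostatic Stokes operator (Theorem~\ref{thm: H-infinity calculus} with $\sA v=\sF(v)-\partial_tv$, together with Lemma~\ref{lem:bilinear}) gives $\|v\|_{H^2}\lesssim\|\partial_tv\|_{L_2}+\|\sF(v)\|_{L_2}+\|v\|_{L_2}$. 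The term $\|\sF(v)\|_{L_2}$ is absorbed via interpolation of the $H^1$ bounds, which yields $B_1$.
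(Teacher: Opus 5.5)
Your outline has most of the right ingredients (Lemma~\ref{lem:Stokes-H} for $\bar v$, the $|\tilde v|_g^2\tilde v$ test, the $-\partial_r^2 v$ test, the trace interpolation of Proposition~\ref{Prop:trace}). However, your sequential ordering cannot close, because of the Dirichlet condition on $\Sigma_b$.

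\textbf{Step~1 does not depend only on $B_0$.} Besides the forcing $h^{-1}\partial_r v|_{\Sigma_b}$, integrating $-\Delta_\sN\tilde v$ against $|\tilde v|_g^2\tilde v$ leaves the boundary term $\int_{\Sigma_b}\langle\partial_r v,|\tilde v|_g^2\tilde v\rangle_g\,d\mu_g$. You dropped this term, but it does not vanish, since $\partial_r\tilde v=\partial_r v$ and $\tilde v=-\bar v$ on $\Sigma_b$. Both terms need a trace of $\partial_r v$ on $\Sigma_b$, hence $\|\nabla^\sN v_r\|_{L_2(\sN)}$. That quantity is not controlled by $\|\nabla^\sN v\|_{L_2(\sN)}$.

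\textbf{The pressure does not drop out in Step~3.} Only $v$, not $\partial_r v$, vanishes on $\Sigma_b$, so $(\wgd\pi_s,\partial_r^2v)_\sN=-\int_{\Sigma_b}\langle\wgd\pi_s,\partial_r v\rangle_g\,d\mu_g$. Your fallback meets the same term: evaluating $\eqref{PE}_1$ at $r=-h$ shows that the Neumann datum of $u=\partial_r v$ on $\Sigma_b$ is exactly $\wgd\pi_s$.

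\textbf{The three estimates are circular.} The $\nabla\bar v$ estimate needs $\|v_r\|_{L_2(\Sigma_b)}$ and $\||\tilde v|_g|\nabla^\sN\tilde v|_g\|_{L_2}$. The $v_r$ estimate needs $\|\wgd\pi_s\|_{L_2(\Sigma)}$ and $\||\tilde v|_g|\partial_r\tilde v|_g\|_{L_2}$. The $L_4$ estimate needs $\|\nabla^\sN v_r\|_{L_2}$.

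\textbf{The missing idea.} You must derive all three estimates together and add them with suitable weights. The paper uses $\frac{C_2}{4}\|\tilde v\|_{L_4}^4+\|v_r\|_{L_2}^2+20\|\nabla\bar v\|_{L_2(\Sigma)}^2$ with $C_2$ large. With these weights:
\begin{itemize}
\item the pressure dissipation $\|\wgd\pi_s\|^2_{L_2(\Sigma)}$ provided by Lemma~\ref{lem:Stokes-H} absorbs the bottom pressure term;
\item $\|\nabla^\sN v_r\|^2_{L_2}$ absorbs the trace terms;
\item $\||\tilde v|_g|\nabla^\sN\tilde v|_g\|^2_{L_2}$ absorbs the quartic terms.
\end{itemize}
A single Gronwall argument then closes the estimate. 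As a by-product it gives $\wgd\pi_s\in L_2(0,T;L_2(\Sigma))$. This is what the later $-\Delta_\sN v$ test needs, since the pressure there is estimated, not killed. Proposition~\ref{pro: properties of lambda-A}(b) and \eqref{commutator-divergence} only reduce $(\wgd\pi_s,\Delta_\sN v)_\sN$ to $(\wgd\pi_s,\Ric\,v)_\sN-\int_{\Sigma_b}\langle\wgd\pi_s,\partial_r v\rangle_g\,d\mu_g$.

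\textbf{Your final step also has a gap.} The term $w\,\partial_r v$ in $\sF(v)$ cannot be absorbed with only $L_\infty(H^1_2)$ information. The estimates available at that level give $\|w\,\partial_r v\|_{L_2(\sN)}\lesssim\|v\|_{H^1_2(\sN)}\|v\|_{H^2_2(\sN)}$. This is linear in the $H^2$-norm with a coefficient that is not small, so "interpolation of the $H^1$ bounds" does not close. The paper adds a further estimate, $v_r\in L_\infty(0,T;L_3(\sN))$, obtained by testing with $-\partial_r(|v_r|_g v_r)$. There the bottom pressure term is again controlled through $\int_0^T\|\wgd\pi_s\|^2_{L_2(\Sigma)}\,dt$. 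This bound is then used in $\|w\,\partial_r v\|_{L_2}\le\|w\|_{L_6}\|v_r\|_{L_3}$. Alternatively, $v\in L_2(0,T;H^2_2)$ with $\partial_t v\in L_2(0,T;H^1_2)$ yields $v\in L_\infty(0,T;H^{3/2}_2)\hookrightarrow L_\infty(0,T;H^1_3)$. Either way, some ingredient beyond the $H^1$ level is required.
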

\begin{proof}
The proof follows the approach of  \cite{GHK17, GGHHK20,  HiKa16} and is divided into eight steps.

\medskip\noindent
{\bf Step 1:} $L_\infty((0,T); L_2(\Sigma))$-estimate for $\nabla \overline{v}:$

\medskip\noindent
We apply  Lemma~\ref{lem:Stokes-H} to \eqref{PE-avg} and use Proposition~\ref{Prop:trace} {as well as Young's inequality} to obtain
\begin{align*}
&\quad  20  \partial_t \| \nabla \overline{v} \|_{L_2(\Sigma)}^2 +   \| \Delta_\Sigma \overline{v} \|_{L_2(\Sigma)}^2 +  \| \wgd \pi_s \|_{L_2(\Sigma)}^2  \\
&\leq   C_\Sigma  \left( \| |\overline{v}|_g |\nabla \overline{v} |_g  \|_{L_2(\Sigma)}^2 +  \| |\tilde{v}|_g |\nabla^\sN\tilde{v} |_g  \|_{L_2(\sN)}^2  +     \| \overline{v} \|_{L_2(\Sigma)}^2 + \| v_r   \|_{L_2(\Sigma_b)}^2   \right) \\
&\leq C_\Sigma  \left( \| |\overline{v}|_g |\nabla \overline{v} |_g  \|_{L_2(\Sigma)}^2 +  \| |\tilde{v}|_g |\nabla^\sN\tilde{v} |_g  \|_{L_2(\sN)}^2  + \| \overline{v} \|_{L_2(\Sigma)}^2
 + \| v_r   \|_{L_2(\sN)}^2  \right)   +  \frac{1}{4}\|  \nabla^\sN  v_r   \|_{L_2(\sN)}^2 .
\end{align*}
The first   term  on the RHS can be estimated as follows:
\goodbreak
\begin{align*}
& \| |\overline{v}|_g |\nabla \overline{v} |_g  \|_{L_2(\Sigma)}^2 \\
& \leq \|\overline{v} \|_{L_4(\Sigma)}^2 \| \nabla  \overline{v} \|_{L_4(\Sigma)}^2 \\
&\leq C \left( \|\overline{v} \|_{L_2(\Sigma)}^2 + \|\overline{v} \|_{L_2(\Sigma)} \| \nabla  \overline{v} \|_{L_2(\Sigma)}  \right)
\left( \| \nabla\overline{v} \|_{L_2(\Sigma)}^2 + \| \nabla \overline{v} \|_{L_2(\Sigma)} \| \nabla^2  \overline{v} \|_{L_2(\Sigma)}  \right) \\
&\leq C \left( \|\overline{v} \|_{L_2(\Sigma)}^2 + \|\overline{v} \|_{L_2(\Sigma)} \| \nabla  \overline{v} \|_{L_2(\Sigma)}  \right)
\left( \| \nabla\overline{v} \|_{L_2(\Sigma)}^2 + \| \nabla \overline{v} \|_{L_2(\Sigma)} ( \|\overline{v} \|_{L_2(\Sigma)}+ \| \Delta_\Sigma   \overline{v} \|_{L_2(\Sigma)}  ) \right)\\
& \leq C \|\overline{v} \|_{L_2(\Sigma)}^2 \| \nabla\overline{v} \|_{L_2(\Sigma)}^2 + C \|\overline{v} \|_{L_2(\Sigma)}^3 \| \nabla \overline{v} \|_{L_2(\Sigma)} + C \|\overline{v} \|_{L_2(\Sigma)} \| \nabla \overline{v} \|_{L_2(\Sigma)}^3 \\
&\qquad + C \|\overline{v} \|_{L_2(\Sigma)}^2 \| \nabla \overline{v} \|_{L_2(\Sigma)} \| \Delta_\Sigma   \overline{v} \|_{L_2(\Sigma)}
+ C \|\overline{v} \|_{L_2(\Sigma)}  \| \nabla \overline{v} \|_{L_2(\Sigma)}^2 \| \Delta_\Sigma   \overline{v} \|_{L_2(\Sigma)}  \\
&\leq  C \left(1 +   \|   v \|_{L_2(\sN)}^4 +  \|   \nabla^\sN  v \|_{L_2(\sN)}^2 + \|    v \|_{L_2(\sN)}^2\|    \nabla^\sN  v \|_{L_2(\sN)}^2 \right) \| \nabla\overline{v}  \|_{L_2(\Sigma)}^2  \\
&\qquad \qquad + C \|  v \|_{L_2(\sN)}^6   + \frac{1}{C_\Sigma} \| \Delta_\Sigma  \overline{v} \|_{L_2(\Sigma)}^2
\end{align*}
where we have used \eqref{bdd-avg}, \eqref{bdd-avg-gd}, and \eqref{G-N ineq}$_4$.
In addition, we used  elliptic regularity theory to obtain the estimate
\begin{equation}
\label{elliptic-estimate}
\| \nabla^2 \bar{v}\|_{L_2(\Sigma)} \leq C \| \bar{v}\|_{H^2(\Sigma)} \leq  C \left( \| \bar{v} \|_{L_2(\Sigma)} + \| \Delta_\Sigma  \bar{v} \|_{L_2(\Sigma)} \right),
\end{equation}
 see  for instance \cite[Theorem 1.30]{Ama16}.
We thus infer 
\begin{equation}
\label{est-v-avg-H1}
\begin{split}
&  20  \partial_t \| \nabla \overline{v} \|_{L_2(\Sigma)}^2   +  \| \wgd \pi_s \|_{L_2(\Sigma)}^2  \\
&\leq C  \left(1 +   \|   v \|_{L_2(\sN)}^4 +  \|    \nabla^\sN  v \|_{L_2(\sN)}^2 + \|    v \|_{L_2(\sN)}^2\|    \nabla^\sN  v \|_{L_2(\sN)}^2 \right) \| \nabla\overline{v}  \|_{L_2(\Sigma)}^2  \\
&\quad +C \left( \| v_r   \|_{L_2(\sN)}^2 + \|  v \|_{L_2(\sN)}^2  + \|  v \|_{L_2(\sN)}^6   \right)  + C_\Sigma   \| |\tilde{v}|_g |\nabla \tilde{v} |_g  \|_{L_2(\sN)}^2   +    \frac{1}{4}\|   \nabla^\sN  v_r   \|_{L_2(\sN)}^2 .
\end{split}
\end{equation}

\medskip\noindent
{\bf Step 2:} $L_\infty((0,T); L_2(\sN))$-estimate for $\partial_r v$. For this, we test \eqref{PE} by $-\partial_r^2 v$.

\medskip\noindent
We first consider the term $(\Delta_{\sN}v  , \partial^2_r v)_\sN$.
Employing Proposition \ref{prop:local existence} (d), \eqref{Green-1} and integrating by parts with respect to $r$ yields
\begin{equation*}
\begin{aligned}
( \Delta_\sN v , \partial^2_r v)_\sN
& = -(  \nabla^\sN  v,   \nabla^\sN  \partial^2_r v)_\sN + ( \nabla^\sN_{\nu}  v, \partial^2_r v)_{\pa\sN} \\
& =(  \nabla^\sN  \partial_r v ,   \nabla^\sN  \partial_r v)_\sN
-\int_\Sigma \Big[ \la   \nabla^\sN  v ,   \nabla^\sN  \pa_r v\ra_{g_\sN}-  \la  \partial_r  v, \partial^2_r v\ra_g \Big]^{r=0}_{r=-h} \\
& = (  \nabla^\sN  v_r ,   \nabla^\sN  v_r)_\sN.
\end{aligned}
\end{equation*}
Here we used the identity
\begin{align*}
\la  \nabla^\sN v,  \nabla^\sN  \partial_r v\rangle_g - \la \partial_r v, \partial^2_r v \ra_g
 = \la  \nabla v, \nabla \partial_r v\ra_g ,
\end{align*}
see \eqref{splitting-2},
and the boundary conditions  imposed on $v$ to deduce that
$$
\int_\Sigma \Big[ \la   \nabla^\sN  v ,   \nabla^\sN  \pa_r v\ra_{g}-  \la  \partial_r  v, \partial^2_r v\ra_g \Big]^{r=0}_{r=-h} \,d\mu_g=0.
$$
We now turn our attention to the term  $ -(\nabla_v v+ w v_r , \partial_r  v_r )_\sN$.

Integrating by parts with respect to $r$ and using the boundary conditions for $v$ and $w$,
it follows from Proposition~\ref{pro: divergence-free} with $z= v_r$ and $q=2,$
and from $\eqref{PE}_3$  that
\begin{align*}
 -(\nabla_v v+ w v_r , \partial_r  v_r)_\sN
&= - \int_\Sigma \Big[ \la  \nabla_v v + w v_r,   v_r\ra_{g_\sN} \Big]^{r=0}_{r=-h }  \,d\mu_g\\
& \quad + (\nabla_v v_r + w \pa_r v_r ,    v_r )_\sN  +  (\nabla _{v_r} v + (\pa_r w) v_r ,    v_r )_\sN  \\
& =  (\nabla _{v_r} v ,    v_r )_\sN  -   \int_{\sN}   (\wdv v) |v_r|^2_g    \, d\mu_{g_\sN}.
\end{align*}
Taking into account that  $\pi_s$ and $\Ric$ do not depend on $r$ we obtain
$$
(\gd_\Sigma \pi_s, \partial^2_r v)_\sN = \int_\Sigma \Big[\la \gd_\Sigma  \pi_s, \partial_r v \ra_g \Big]^{r=0}_{r=-h}\, d\mu_g
= -\int_\Sigma \la \gd_\Sigma  \pi_s, \partial_r v \ra_g \Big|_{r=-h}  \,d\mu_g
$$
and
$$
(\Ric v, \partial^2_r v)_{\sN}
= \int_{\Sigma} \Big[ \la \Ric v, \partial_r v \ra_g \Big]^{r=0}_{r=-h}\, d\mu_g - (\Ric \partial_r v, \partial_r v)_\sN,
= - (\Ric \partial_r v, \partial_r v)_\sN
$$
where we again used the boundary conditions for $v$.
In summary,
\begin{align*}
&\frac{1}{2} \partial_t \| v_r   \|_{L_2(\sN)}^2 + \|   \nabla^\sN  v_r \|_{L_2(\sN)}^2 \\
& = -\int_{\Sigma_b} \la \wgd \pi_s , v_r\ra_g \, d\mu_g  +   \int_{\sN} (\wdv v) |v_r|_g^2\, d\mu_{g_\sN}   -   \left(\nabla_ {v_r} \overline{v} , v_r \right)_\sN  -   \left(\nabla_ {v_r} \tilde{v} , v_r \right)_\sN
+  (\Ric  v_r , v_r )_\sN  \\
& =:{ I_1+\cdots +I_5}.
\end{align*}
\medskip\noindent
The six terms on the RHS can be estimated as follows.
An application of Proposition~\ref{Prop:trace} yields
\begin{align*}
I_1 &\leq \| \wgd \pi_s \|_{L_2(\Sigma_b)} \|v_r   \|_{L_2(\sN)}^{1/2}  \|   \nabla^\sN  v_r \|_{L_2(\sN)}^{1/2}  \\
&\leq \frac{1}{4} \| \wgd \pi_s \|_{L_2(\Sigma)}^2 + \frac{1}{8} \|   \nabla^\sN  v_r \|_{L_2(\sN)}^2 + C  \|v_r \|_{L_2(\sN)}^2,	
\end{align*}
{since $\| \wgd \pi_s \|_{L_2(\Sigma_b)}=\| \wgd \pi_s \|_{L_2(\Sigma)}$.}

\medskip\noindent
It follows from \eqref{bdd-avg-gd} and \eqref{G-N ineq}$_4$ that
\begin{align*}
I_3  & \leq \int_{\Sigma} | \nabla \overline{v}|_g \int_{-h}^0 | v_r|_g^2\, dr \, d\mu_g \leq \| \nabla \overline{v} \|_{L_2(\Sigma)} \int_{-h}^0 \left\|   v_r  \right\|_{L_4(\Sigma)}^2 \, dr  \\
&\leq C \| \nabla \overline{v} \|_{L_2(\Sigma)} \int_{-h}^0 \left(  \| v_r\|_{L_2(\Sigma)} \|\nabla v_r\|_{L_2(\Sigma)} +   \| v_r\|_{L_2(\Sigma)}^2    \right) \, dr   \\
&\leq C \left(\|   \nabla^\sN   v \|_{L_2(\sN)}  +  \|   \nabla^\sN   v \|_{L_2(\sN)}^2  \right) \| v_r\|_{L_2(\sN)}^2     +\frac{1}{8} \|  \nabla^\sN  v_r\|_{L_2(\sN)}^2.
\end{align*}
In view of \eqref{PE}$_2$, we have { $\wdv {v}=\wdv \tilde{v}$, hence}
\begin{align*}
I_2 &= \int_{\sN} \wdv \tilde{v} |v_r|_g^2\, d\mu_{g_\sN}  = - 2 \left( \nabla_{\tilde{v}} v_r , v_r  \right)_\sN \leq C \int_{\sN} |\tilde{v}|_g | \tilde{v}_r|_g | \nabla v_r|_g \, d\mu_{g_\sN} \\
&\leq {\frac{C_1}{4}} \left\|  |\tilde{v}|_g | \tilde{v}_r|_g\right\|_{L_2(\sN)}^2  + {\frac{1}{8}} \| \nabla^\sN  v_r\|_{L_2(\sN)}^2
\end{align*}
for some constant $C_1$,
and similarly
\begin{align*}
I_4 & = \int_{\sN} \nabla_{v_r} \la \tilde{v} , v_r \ra_g \, d\mu_{g_\sN} - \int_{\sN}  \la \tilde{v} , \nabla_{v_r} v_r \ra_g \, d\mu_{g_\sN} \\
&=  - \int_{\sN} \wdv v_r \la \tilde{v} , v_r \ra_g \, d\mu_{g_\sN} - \int_{\sN}  \la \tilde{v} , \nabla_{v_r} v_r \ra_g \, d\mu_{g_\sN} \\
& \leq  2 \int_{\sN} | \nabla v_r|_g |  \tilde{v}|_g | v_r|_g \, d\mu_{g_\sN}  \\
&\leq \frac{C_1}{4} \left\|  |\tilde{v}|_g | \tilde{v}_r|_g\right\|_{L_2(\sN)}^2  + {\frac{1}{8} } \| \nabla^\sN  v_r\|_{L_2(\sN)}^2.
\end{align*}
Moreover,
$ I_5\leq C  \|v_r \|_{L_2(\sN)}^2. $ 
Combining the estimates for $I_1$-$I_5$, we arrive at
\begin{equation}
\label{est-vr-L2}
\begin{split}
& \partial_t \| v_r  \|_{L_2(\sN)}^2 + \|   \nabla^\sN  v_r \|_{L_2(\sN)}^2 \\
& \leq C_1\left\|  |\tilde{v}|_g | \tilde{v}_r|_g\right\|_{L_2(\sN)}^2 +  C \left(1 +  \|   \nabla^\sN   v \|_{L_2(\sN)}^2  \right) \|v_r \|_{L_2(\sN)}^2 + \frac{1}{2} \| \wgd \pi_s \|_{L_2(\Sigma)}^2
\end{split}
\end{equation}
 where we also used the estimate $(\|   \nabla^\sN  v \|_{L_2(N)} + \|   \nabla^\sN  v \|^2_{L_2(N)}) \le  C (1 +  \|   \nabla^\sN   v \|_{L_2(\sN)}^2  ). $

\medskip\noindent
{\bf Step 3:} $L_\infty((0,T); L_4(\sN))$-estimate for $\tilde{v}$. We test  \eqref{PE-oscillation} by $|\tilde{v}|_g^2 \tilde{v}$.

\medskip\noindent
By virtue of \eqref{Green-2}, \eqref{computation-2} and the boundary condition $\partial_r v=\partial_r \tilde v=0$ on $\Sigma_u$ we obtain
\begin{align*}
-(\Delta_\sN  \tilde v, |\tilde v|^2_g \tilde v)_\sN
& =  (  \nabla^\sN  \tilde v,   \nabla^\sN  ( | \tilde v|^2_g \tilde v))_\sN
- \int_\Sigma \Big[\la \partial_r  \tilde v,  |\tilde v|^2_g \tilde v\ra_g \Big]^{r=0}_{r=-h}\,d\mu_g \\
& = \left\| | \tilde{v}|_g |   \nabla^\sN  \tilde{v}|_g \right\|_{L_2(\sN)}^2 +  \frac{1}{2} \left\|\gd_\sN |\tilde{v}|^2_g \right\|_{L_2(\sN)}^2
+ \int_\Sigma \la \partial_r  \tilde v,  |\tilde v|^2_g \tilde v\ra_g \Big|_{r=-h}\,d\mu_g.
\end{align*}
By Proposition~\ref{pro: divergence-free} with $z=\tilde v$ and $q=4$ we have
\begin{align*}
( \nabla_{\tilde{v}} \tilde{v}  +\nabla_{\overline{v}} \tilde{v} + w \partial_r \tilde{ v}  , |\tilde v|^2_g \tilde v)_\sN
= ( \nabla_v \tilde{v}  + w \partial_r \tilde{ v}   , |\tilde v|^2_g \tilde v)_\sN =0.
\end{align*}
Combining, this results in
\begin{align*}
 \frac{1}{4}\partial_t& \| \tilde{v} \|_{L_4(\sN)}^4 + \frac{1}{2} \left\| \gd_\sN |\tilde{v}|^2_g \right\|_{L_2(\sN)}^2 + \left\| | \tilde{v}|_g |   \nabla^\sN  \tilde{v}|_g \right\|_{L_2(\sN)}^2 \\
&=
  \left(\avint_{-h}^0 \left[\nabla_{\tilde{v}} \tilde{v}  + (\wdv \tilde{v})  \tilde{v}  \right]\, dr  ,  |\tilde{v}|_g^2 \tilde{v} \right)_{\sN}
   + \frac{1}{h}  \left( \partial_r v |_{\Sigma_b} ,  |\tilde{v}|_g^2 \tilde{v}  \right)_\sN  \\
 &\qquad  \qquad  + \int_\Sigma \la \partial_r  \tilde v,  |\tilde v|^2_g \tilde v\ra_g \Big|_{r=-h}\,d\mu_g  -  \left(   \nabla_{\tilde{v}} \overline{v}   , |\tilde{v}|_g^2 \tilde{v} \right)_\sN  +  \left( \Ric \tilde{v} , |\tilde{v}|_g^2 \tilde{v} \right)_\sN \\
&  = : I \! I_1 + \cdots +  I \! I_5 .
\end{align*}
We will estimate the terms on the  RHS one by one.
First, we can apply   Lemma~\ref{lem:interpolation} to derive
\begin{align*}
II_1\, & \leq C \int_{\Sigma} \left(  \int_{-h}^0 |\tilde{v}|_g | \nabla^\sN \tilde{v}|_g \, dr  \right)\left(  \int_{-h}^0 |\tilde{v}|_g^3 \, dr  \right)\, d\mu_g \\
&\leq C \left(  \int_{-h}^0 \left\| |\tilde{v}|_g | \nabla^\sN \tilde{v}|_g \right\|_{L_{4/3}(\Sigma)} \, dr  \right)\left(  \int_{-h}^0 \left\| |\tilde{v}|_g^3 \right\|_{L_4(\Sigma)} \, dr  \right) \\
&\leq C \left(  \int_{-h}^0 \left\|  \tilde{v} \right\|_{L_4(\Sigma)} \left\| \nabla^\sN \tilde{v}  \right\|_{L_2(\Sigma)} \, dr  \right)
\left(  \int_{-h}^0 \left(  \left\|  \tilde{v}  \right\|_{L_4(\Sigma)}^3  + \left\|  \tilde{v}  \right\|_{L_4(\Sigma)} \left\| \wgd |\tilde{v}|_g^2 \right\|_{L_2(\Sigma)} \right) \, dr  \right) \\
&\leq C     \left\|  \tilde{v} \right\|_{L_4(\sN)} \left\| \nabla \tilde{v}  \right\|_{L_2(\sN)}
\left(    \left\|  \tilde{v}  \right\|_{L_4(\sN)}^3  + \left\|  \tilde{v}  \right\|_{L_4(\sN)} \left\|  \gd   |\tilde{v}|_g^2 \right\|_{L_2(\sN)} \right)    \\
&\leq C  \left(  \|   \nabla^\sN  v \|_{L_2(\sN)}  + \|   \nabla^\sN  v \|_{L_2(\sN)}^2 \right) \left\|  \tilde{v}  \right\|_{L_4(\sN)}^4   +  \frac{1}{8}\left\| \gd |\tilde{v}|_g^2 \right\|_{L_2(\sN)}^2.
\end{align*}
Here we have used the fact that $\|\nabla \tilde{v}\|_{L_2(\sN)} \leq \| \nabla v \|_{L_2(\sN)}  + \|\nabla \overline{v} \|_{L_2(\sN)} \leq 2\| \nabla v \|_{L_2(\sN)} $ due to   \eqref{bdd-avg-gd}.

\medskip\noindent
Next, it follows from Lemma~\ref{lem:interpolation} and Proposition~\ref{Prop:trace}  that
\begin{align*}
II_2\, &   \leq C \int_\Sigma \left|  \partial_r v |_{\Sigma_b} \right|_g \int_{-h}^0  | \tilde{v}|_g^3\, dr  \, d\mu_g \leq C \| v_r \|_{L_2(\Sigma_b)}\int_{-h}^0  \left\| | \tilde{v}|_g^3 \right\|_{L_2(\Sigma)}\, dr   \\
&\leq C \| v_r \|_{L_2(\Sigma_b)} \int_{-h}^0 \left( \| \tilde{v}\|_{L_4(\Sigma)}^3 + \| \tilde{v}\|_{L_4(\Sigma)} \left\| \wgd |\tilde{v}|_g^2 \right\|_{L_2(\Sigma)}  \right) \, dr  \\
&\leq C \| v_r\|_{L_2(\sN)}^{1/2}\|   \nabla^\sN  v_r\|_{L_2(\sN)}^{1/2} \| \tilde{v}\|_{L_4(\sN)}^3
+ C \| v_r\|_{L_2(\sN)}^{1/2}\|   \nabla^\sN  v_r\|_{L_2(\sN)}^{1/2}\| \tilde{v}\|_{L_4(\sN)} \left\| \gd |\tilde{v}|_g^2 \right\|_{L_2(\sN)} \\
&\leq   \frac{1}{8}\left\| \gd |\tilde{v}|_g^2 \right\|_{L_2(\sN)}^2 + \frac{1}{8 C_2}\|   \nabla^\sN  v_r \|_{L_2(\sN)}^2 + C \left( \| v_r\|_{L_2(\sN)}^{2/3} + \| v_r\|_{L_2(\sN)}^2 \right) \| \tilde{v}\|_{L_4(\sN)}^4,
\end{align*}
where {$C_2= 2(C_\Sigma +C_1)$}.
By the relation $\tilde{v}=- \bar{v}$ on $\Sigma_b$, for $p=\frac{2}{1-\theta}$ and $q=\frac{4}{1+2\theta}$ with $\theta>0$ sufficiently small we obtain
\begin{equation}
\begin{split}\label{est-additional bd}
II_3 & =  \int_\Sigma \la \partial_r  v,  |\tilde v|^2_g \bar v\ra_g \Big|_{r=-h}\,d\mu_g \le \| \partial_r v \|_{L_4(\Sigma_b)} \left\| |\tilde v|^2_g \right\|_{L_p(\Sigma_b)} \| \bar{v} \|_{L_q(\Sigma)} \\
&\le C \|   v_r \|_{H^1_2(\sN)} \left\| |\tilde v|^2_g \right\|_{H^{\frac{1}{2}+\theta}_2(\sN)} \left( \|\nabla \bar{v}\|_{L_2(\Sigma)}^{\frac{1-2\theta}{2}} \|  \bar{v}\|_{L_2(\Sigma)}^{\frac{1+2\theta}{2}}  + \|  \bar{v}\|_{L_2(\Sigma)} \right) \\
& \leq C \| \nabla^{\sN} v_r \|_{L_2(\sN)} \left\| |\tilde v|^2_g \right\|_{L_2(\sN)}^{\frac{1}{2}-\theta} \left(  \left\|  |\tilde v|^2_g \right\|_{L_2(\sN)}^{\frac{1}{2}+\theta} + \left\| \gd |\tilde v|^2_g \right\|_{L_2(\sN)}^{\frac{1}{2}+\theta} \right)\\
\quad & \times
\left( \|\nabla^{\sN} v\|_{L_2(\sN)}^{\frac{1-2\theta}{2}} \|  v\|_{L_2(\sN)}^{\frac{1+2\theta}{2}}  + \|  v\|_{L_2(\sN)} \right)  \\
& \leq C \| \nabla^{\sN} v_r \|_{L_2(\sN)} \left\| |\tilde v|^2_g \right\|_{L_2(\sN)} \left( \|\nabla^{\sN} v\|_{L_2(\sN)}^{\frac{1-2\theta}{2}} \|  v\|_{L_2(\sN)}^{\frac{1+2\theta}{2}}  + \|  v\|_{L_2(\sN)} \right)  \\
 &\quad+ C  \| \nabla^{\sN} v_r \|_{L_2(\sN)} \left\| |\tilde v|^2_g \right\|_{L_2(\sN)}^{\frac{1}{2}-\theta}   \left\| \gd |\tilde v|^2_g \right\|_{L_2(\sN)}^{\frac{1}{2}+\theta}
\left( \|\nabla^{\sN} v\|_{L_2(\sN)}^{\frac{1-2\theta}{2}} \|  v\|_{L_2(\sN)}^{\frac{1+2\theta}{2}}  + \|  v\|_{L_2(\sN)} \right)  \\
&\le \frac{1}{8 C_2} \| \nabla^{\sN} v_r \|_{L_2(\sN)}^2 + \frac{1}{8}\left\|\gd |\tilde{v}|^2_g \right\|_{L_2(\sN)}^2  \\
& \quad + C  \left\|  \tilde v  \right\|_{L_4(\sN)}^4   \left( \|\nabla^{\sN} v\|_{L_2(\sN)}^2 \|  v\|_{L_2(\sN)}^{\frac{2+4\theta}{1-2\theta}}  + \|  v\|_{L_2(\sN)}^{\frac{4}{1-2\theta}}  + \|\nabla^{\sN} v\|_{L_2(\sN)}^{ 1-2\theta } \|  v\|_{L_2(\sN)}^{ 1+2\theta }  + \|  v\|_{L_2(\sN)}^2 \right).
\end{split}
\end{equation}
In \eqref{est-additional bd}$_2$, we have used the Gagliardo-Nirenberg   inequality
\[
\|u\|_{L_q(\Sigma)} \leq C \| \nabla u\|_{L_2(\Sigma)}^{1-\frac{2}{q}} \|   u\|_{L_2(\Sigma)}^{ \frac{2}{q}} + \|   u\|_{L_2(\Sigma)}.
\]
and the embeddings
\[
H^1_2(\sN) \hookrightarrow L_4(\Sigma_b), \qquad
H^{\frac{1}{2}+\theta}_2(\sN)  \hookrightarrow L_p(\Sigma_b),
\]
which follows from the trace theorem and Sobolev embedding, , cf. \cite[Theorems 10.1 and 14.2]{Ama13}.
In \eqref{est-additional bd}$_3$, we applied   interpolation theory, see  \cite[Theorem~13.1]{Ama13}.
\\
Similarly, {by Lemma~\ref{lem:interpolation}},
\begin{align*}
II_4 & \leq  \int_\Sigma | \nabla \overline{v} |_g \int_{-h}^0 |\tilde{v}|_g^4 \, dr \, d\mu_g     \leq C \|\nabla \overline{v}\|_{L_2(\Sigma ) }  \int_{-h}^0 \left\| |\tilde{v}|_g^4 \right\|_{L_2(\Sigma)} \, dr  \\
&\leq C \|\nabla \overline{v}\|_{L_2(\Sigma ) }  \int_{-h}^0 \left(  \left\|  \tilde{v}  \right\|_{L_4(\Sigma)}^4 + \left\|  \tilde{v}  \right\|_{L_4(\Sigma)}^2  \left\| \wgd | \tilde{v} |_g^2 \right\|_{L_2(\Sigma)}   \right) \, dr  \\
&\leq C \left( \|\nabla \overline{v}\|_{L_2(\Sigma ) } +  \|\nabla \overline{v}\|_{L_2(\Sigma ) }^2 \right) \| \tilde{v} \|_{L_4(\sN)}^4 +  \frac{1}{8}\left\|\gd |\tilde{v}|^2_g \right\|_{L_2(\sN)}^2 \\
&\leq C  \left(  \|   \nabla^\sN  v \|_{L_2(\sN)}  + \|   \nabla^\sN  v \|_{L_2(\sN)}^2 \right) \| \tilde{v} \|_{L_4(\sN)}^4 +  \frac{1}{8}\left\|\gd |\tilde{v}|^2_g \right\|_{L_2(\sN)}^2 ,
\end{align*}
where we have used \eqref{bdd-avg-gd}.

Direct computations show that $II_5 \leq C \| \tilde{v} \|_{L_4(\sN)}^4. $
Combining the above estimates for $II_1$-$II_5$ yields
\begin{equation}
\label{est-v-osc-L4}
\begin{split}
&\frac{1}{4}\partial_t \| \tilde{v} \|_{L_4(\sN)}^4  + \left\| | \tilde{v}|_g |   \nabla^\sN  \tilde{v}|_g \right\|_{L_2(\sN)}^2 \\
& \leq C  \Big(  1  + \|   \nabla^\sN  v \|_{L_2(\sN)}^2 +  \|\nabla^{\sN} v\|_{L_2(\sN)}^2 \|  v\|_{L_2(\sN)}^{\frac{2+4\theta}{1-2\theta}}  + \|  v\|_{L_2(\sN)}^{\frac{4}{1-2\theta}}   \Big) \| \tilde{v} \|_{L_4(\sN)}^4 +    {\frac{1}{4 C_2}}\|   \nabla^\sN  v_r \|_{L_2(\sN)}^2  .
\end{split}
\end{equation}

\medskip\noindent
{\bf Step 4:}
{Multiplying \eqref{est-v-osc-L4} with $C_2$ and adding the result with \eqref{est-v-avg-H1} and \eqref{est-vr-L2}, yields
\begin{align*}
&\partial_t \left( \frac{C_2}{4}  \| \tilde{v} \|_{L_4(\sN)}^4  +   \| v_r  \|_{L_2(\sN)}^2  +  20  \| \nabla \overline{v} \|_{L_2(\Sigma)}^2  \right)\\
&\quad  +  \frac{1}{2} \left(C_2\left\| | \tilde{v}|_g |   \nabla^\sN  \tilde{v}|_g \right\|_{L_2(\sN)}^2 +   \|   \nabla^\sN  v_r \|_{L_2(\sN)}^2 +   \| \wgd \pi_s \|_{L_2(\Sigma)}^2\right)\\
&\leq C  \left(  1 +    \|  v\|_{L_2(\sN)}^{\frac{4}{1-2\theta}}  + \|   \nabla^\sN  v \|_{L_2(\sN)}^2 + \|    v \|_{L_2(\sN)}^2\|    \nabla^\sN  v \|_{L_2(\sN)}^2 +\|\nabla^{\sN} v\|_{L_2(\sN)}^2 \|  v\|_{L_2(\sN)}^{\frac{2+4\theta}{1-2\theta}}   \right) \times \\
& \qquad \left(  \frac{C_2}{4}  \| \tilde{v} \|_{L_4(\sN)}^4  +   \| v_r  \|_{L_2(\sN)}^2  + 20 \| \nabla \overline{v} \|_{L_2(\Sigma)}^2   \right)  +C \left(  \|  v \|_{L_2(\sN)}^2  + \|  v \|_{L_2(\sN)}^6  \right)     .
\end{align*}
Let
\begin{align*}
\varphi(t):& = \frac{C_2}{4}  \| \tilde{v} (t)\|_{L_4(\sN)}^4  +   \| v_r (t) \|_{L_2(\sN)}^2  +20  \| \nabla \overline{v}(t) \|_{L_2(\Sigma)}^2 \\
& \quad + \frac{1}{2}\int_0^t  \big( C_2\left\| | \tilde{v}|_g |   \nabla^\sN  \tilde{v}|_g \right\|_{L_2(\sN)}^2
 +   \| \wgd \pi_s \|_{L_2(\Sigma)}^2     +     \|   \nabla^\sN  v_r \|_{L_2(\sN)}^2 \big)(\tau) \, d\tau , \\
a_1(t) : & =  \frac{C_2}{4}  \| \tilde{v} (0)\|_{L_4(\sN)}^4  +   \| v_r (0) \|_{L_2(\sN)}^2  + 20 \| \nabla \overline{v}(0) \|_{L_2(\Sigma)}^2  \\
&\qquad+  C \int_0^t \left(  \|  v (\tau)\|_{L_2(\sN)}^2  + \|  v (\tau)\|_{L_2(\sN)}^6  \right)   \, d\tau \\
a_2(t) : &= C \left(  1 +    \|  v\|_{L_2(\sN)}^{\frac{4}{1-2\theta}}   + \|   \nabla^\sN  v(t) \|_{L_2(\sN)}^2 + \|    v (t)\|_{L_2(\sN)}^2\|    \nabla^\sN  v(T) \|_{L_2(\sN)}^2  \right. \\
&\quad \left. +\|\nabla^{\sN} v\|_{L_2(\sN)}^2 \|  v\|_{L_2(\sN)}^{\frac{2+4\theta}{1-2\theta}}   \right)
\end{align*}
}
After integrating the above inequality from $0$ to $t$, we infer that
\begin{align*}
0\le \varphi(t) \le a_1(t) + \int_0^t  a_2(t) \varphi(t),\quad t\in [0,T].
\end{align*}
An application of Gronwall's inequality together with Lemma \ref{lem:basic-L2-est} yields
\begin{equation}
\label{combine-est-1}
\begin{split}
&  \frac{C_2}{4}  \| \tilde{v} (t)\|_{L_4(\sN)}^4  +   \| v_r (t) \|_{L_2(\sN)}^2  +20  \| \nabla \overline{v}(t) \|_{L_2(\Sigma)}^2 \\
& \quad + \frac{1}{2}\int_0^t  \left( C_2\left\| | \tilde{v}|_g |   \nabla^\sN  \tilde{v}|_g \right\|_{L_2(\sN)}^2  +        \| \wgd \pi_s \|_{L_2(\Sigma)}^2     +     \|   \nabla^\sN  v_r \|_{L_2(\sN)}^2 \right)(\tau) \, d\tau \\
&\leq  B_2( \|v_0 \|_{ H^1_2(\sN) },   T)
\end{split}
\end{equation}
for each $t\in [0,T]$.

\goodbreak
\medskip\noindent
{\bf Step 5:} {$L_\infty((0,T); H^1_2(\sN))$}-estimate for $v$. Here we test \eqref{PE} with $- \Delta_\sN v$.

\medskip\noindent
  Employing~\eqref{Green-1} and the boundary conditions for $v$ results in
\begin{align*}
-( \Delta_{\sN} v, \partial_t v)_\sN = (  \nabla^\sN  v,   \nabla^\sN  \partial_t v)_\sN - ( \nabla^\sN_{\nu}  v, \partial_t v)_{\pa\sN }= \frac{1}{2}   \partial_t \|   \nabla^\sN  v \|_{L_2(\sN)}^2.
\end{align*}
An  absorbing argument then yields

\begin{align*}
  \partial_t \|   \nabla^\sN  v \|_{L_2(\sN)}^2 + \|  \Delta _\sN v \|_{L_2(\sN)}^2  &
   \leq C\big( \| \Ric   v \|_{L_2(\sN)}^2 + \| \wgd \pi_s   \|_{L_2(\sN)}^2 + \left\| \nabla_{\overline{v}} \overline{v} \right\|_{L_2(\sN)}^2
   + \left\| \nabla_{\overline{v}} \tilde{v} \right\|_{L_2(\sN)}^2 \\
  & \quad  + \left\| \nabla_{\tilde{v}} \overline{v} \right\|_{L_2(\sN)}^2  + \left\| \nabla_{\tilde{v}} \tilde{v} \right\|_{L_2(\sN)}^2 + \left\| w \partial_r v \right\|_{L_2(\sN)}^2 \big) \\
  &=: III_1 + \cdots + III_7.
\end{align*}
It is clear that $III_1 \leq C \|  v \|_{L_2(\sN)}^2$.
The  terms $III_3$-$III_7$   can be estimated by using \eqref{bdd-avg}, \eqref{bdd-avg-gd}, and \eqref{G-N ineq}$_2$, \eqref{G-N ineq}$_4$  as follows.
\begin{align*}
III_3\, &\leq   C \| \overline{v} \|_{L_4(\Sigma)}^2 \|  \nabla \overline{v} \|_{L_4(\Sigma)}^2 \\
& \leq C \left( \| \overline{v} \|_{L_2(\Sigma)}   \|  \nabla \overline{v} \|_{L_2(\Sigma)}  +   \| \overline{v} \|_{L_2(\Sigma)}^2  \right)  \left( \|  \nabla \overline{v} \|_{L_2(\Sigma)}   \|  \nabla^2   \overline{v} \|_{L_2(\Sigma)}  +   {\|\nabla \overline{v} \|_{L_2(\Sigma)}^2 } \right) \\
& \leq C \left( \| \overline{v} \|_{L_2(\Sigma)}   \|  \nabla \overline{v} \|_{L_2(\Sigma)}  +   \| \overline{v} \|_{L_2(\Sigma)}^2  \right) \\
&\quad  \times  \left( \|  \nabla \overline{v} \|_{L_2(\Sigma)} \left(  \| \overline{v} \|_{L_2(\Sigma)}  +  \|  \Delta_{\Sigma} \overline{v} \|_{L_2(\Sigma)} \right)  +  { \| \nabla\overline{v} \|_{L_2(\Sigma)}^2 } \right) \\
&{\leq C  \| v \|_{L_2(\sN)}^6 +C  \left(1 +  \| v \|_{L_2(\sN)}^2   \|  \nabla \overline{v} \|_{L_2(\Sigma)}^2  + \| v \|_{L_2(\sN)}^4    \right) \|    \nabla^\sN  v \|_{L_2(\sN)}^2}   +   \frac{1}{8}  \|  \Delta_\sN  v \|_{L_2(\sN)}^2  \\
III_4\, &\leq    C \| \overline{v} \|_{L_4(\Sigma)}^2 \|  \nabla \tilde{v} \|_{L_4(\sN)}^2  \leq    C \| \overline{v} \|_{L_4(\Sigma)}^2  \|  \nabla v \|_{L_4(\sN)}^2  \\
&\leq C \left(  \| \overline{v} \|_{L_2(\Sigma)}\|\nabla \overline{v} \|_{L_2(\Sigma)} +  \| \overline{v} \|_{L_2(\Sigma)}^2 \right)
  \|   \nabla^\sN  v \|_{L_2(\sN)}^{1/2}  \|  (\nabla^{\sN})^2 v \|_{L_2(\sN)}^{3/2}    \\
&\leq C \left(  \| \overline{v} \|_{L_2(\Sigma)}\|\nabla \overline{v} \|_{L_2(\Sigma)} +  \| \overline{v} \|_{L_2(\Sigma)}^2 \right) \|   \nabla^\sN  v \|_{L_2(\sN)}^{1/2} {\| \Delta_\sN  v \|_{L_2(\sN)}^{3/2} }   \\
& \leq   { C \| v \|_{L_2(\sN)}\|\nabla \overline{v} \|_{L_2(\Sigma)}\|   \nabla^\sN  v \|_{L_2(\sN)}^{1/2}\| \Delta_\sN  v \|_{L_2(\sN)}^{3/2} +C \| v \|_{L_2(\sN)}^2 \|  \nabla^\sN  v \|_{L_2(\sN)}^{1/2}\| \Delta_\sN  v \|_{L_2(\sN)}^{3/2} }\\
&\leq  C \left( 1 + {\| v \|_{L_2(\sN)}^8 } + \| v \|_{L_2(\sN)}^4 \|\nabla \overline{v} \|_{L_2(\Sigma)}^4  \right) \|   \nabla^\sN  v \|_{L_2(\sN)}^2  +  \frac{1}{8}  \|  \Delta_\sN v \|_{L_2(\sN)}^2    \\
III_5\, &\leq  C \| \tilde{v} \|_{L_4(\sN)}^2 \|  \nabla \overline{v} \|_{L_4(\Sigma)}^2 \leq  C \| \tilde{v} \|_{L_4(\sN)}^2  \left( \|  \nabla  \overline{v} \|_{L_2(\Sigma)} \|  \nabla^2 \overline{v}   \|_{L_2(\Sigma)} + {\|\nabla \overline{v} \|_{L_2(\Sigma)}^2} \right)  \\
&\leq C \| \tilde{v} \|_{L_4(\sN)}^2 \left[ \|  \nabla  \overline{v} \|_{L_2(\Sigma)}  \left(  \| \overline{v} \|_{L_2(\Sigma)}  +  \|  \Delta_\Sigma \overline{v} \|_{L_2(\Sigma)} \right)     + {\|\nabla \overline{v} \|_{L_2(\Sigma)}^2}   \right]\\
&\leq  C  \| \tilde{v} \|_{L_4(\sN)}^2 \| v \|_{L_2(\sN)}    \|    \nabla^\sN  v \|_{L_2(\sN)}   \\
&\hspace{3cm} + C  \| \tilde{v} \|_{L_4(\sN)}^2 {\|   \nabla^\sN  v \|_{L_2(\sN)}^2}  +  C  \| \tilde{v} \|_{L_4(\sN)}^2  \|    \nabla^\sN  v \|_{L_2(\sN)}  \|  \Delta_\sN v \|_{L_2(\sN)}    \\
&\leq    C \| \tilde{v} \|_{L_4(\sN)}^4 \| v \|_{L_2(\sN)}^2  +    C \left( 1  +  \| \tilde{v} \|_{L_4(\sN)}^4  \right)   \|    \nabla^\sN  v \|_{L_2(\sN)}^2  +  \frac{1}{8}  \|  \Delta_\sN v \|_{L_2(\sN)}^2   \\
III_6\, &\leq  \left\|  |\tilde{v} |_g |   \nabla^\sN  \tilde{v} |_g\right\|_{L_2(\sN)}^2 \\
III_7\, & {\leq \| w \|_{L_\infty(L_4)}^2 \| v_r \|_{L_2(L_4)}^2 \leq C \| \wdv v \|_{L_2(L_4)}^2 \| v_r \|_{L_2(L_4)}^2} \\
&{\leq C   \| \wdv v \|_{L_2(\sN)} \| \wdv v \|_{ L_2( {H^1 }) }  \| v_r \|_{L_2(\sN)} \| v_r \|_{L_2({H^1})}} \\
&{\leq C \|   \nabla^\sN  v \|_{L_2(\sN)} \|  v \|_{{H^2}(\sN)}  \| v_r \|_{L_2(\sN)} \|   \nabla^\sN  v_r \|_{L_2(\sN)}}\\
&{\leq C \|   \nabla^\sN  v \|_{L_2(\sN)}\| \Delta_\sN v \|_{L_2(\sN)}  \| v_r \|_{L_2(\sN)} \|   \nabla^\sN  v_r \|_{L_2(\sN)}}\\
&{\leq C\| v_r \|_{L_2(\sN)}^2 \|   \nabla^\sN  v_r \|_{L_2(\sN)}^2\|   \nabla^\sN  v \|_{L_2(\sN)}^2+\frac{1}{8}  \|  \Delta_\sN  v \|_{L_2(\sN)}^2},
\end{align*}
where,  in $III_5$ we used \eqref{elliptic-estimate},
while in  $III_4$ and $III_7$, we have used the fact that
$$\|v\|_{H^2_2(\sN)}\leq C \| \Delta_\sN v \|_{L_2(\sN)},$$
see Lemma~\ref{lem: N-invertible}.
Additionally, in the estimate for $III_7$, we used the interpolation inequality
$$\|u\|_{L_4(\Sigma)}^2\le C\|u\|_{L_2(\Sigma)}\|u\|_{H^1_2(\Sigma)}$$
as well as the Poincar\'{e} inequality for $v_r$,
see Lemma~\ref{lem: Poincare}.
Finally, we have set $L_r(L_q):= L_r((-h,0);L_q(\Sigma))$.

\medskip\noindent
Combining the estimates for $III_1$-$III_7$, we have
$$
  \partial_t \|   \nabla^\sN  v \|_{L_2(\sN)}^2 +  \frac{1}{2} \| \Delta_\sN v \|_{L_2(\sN)}^2  \leq C a_1(t)    + Ca_2(t)  \|   \nabla^\sN  v \|_{L_2(\sN)}^2,
$$
with
\begin{align*}
a_1(t): &=1 + \| \tilde{v} \|_{L_4(\sN)}^4 \| v \|_{L_2(\sN)}^2  
+ \| v \|_{L_2(\sN)}^6
+ \| \wgd \pi_s   \|_{L_2(\sN)}^2 +  \left\|  |\tilde{v} |_g |   \nabla^\sN  \tilde{v} |_g\right\|_{L_2(\sN)}^2
\end{align*}
and
$$a_2(t):= 1 + \| \tilde{v} \|_{L_4(\sN)}^4 + {\| v \|_{L_2(\sN)}^8 } + \| v \|_{L_2(\sN)}^4 \|\nabla \overline{v} \|_{L_2(\Sigma)}^4+\| v_r \|_{L_2(\sN)}^2 \|   \nabla^\sN  v_r \|_{L_2(\sN)}^2.$$
{Then, Gronwall's inequality implies
\begin{equation}
\label{est-v-H1}
\begin{split}
  \|   \nabla^\sN  v (t) \|_{L_2(\sN)}^2 + \int_0^t \| \Delta_\sN v (\tau) \|_{L_2(\sN)}^2 \, d\tau
 \le B_3( \|v_0 \|_{ H^1_2(\sN) } , T),
\end{split}
\end{equation}
for each $t\in [0,T]$.
}

\goodbreak
\medskip\noindent
{\bf Step 6:} $L_\infty((0,T); L_2(\sN))$-estimate for $\partial_t v$:

\medskip\noindent
Multiplying \eqref{PE}$_1$ by $\partial_t v$ and integrating over $\sN$, we can derive
\begin{align*}
& \| v_t \|_{L_2(\sN)}^2 \\
&=  \left( (\Delta_\sN +\Ric)v , v_t \right)_\sN -   \left( \nabla_v v , v_t \right)_\sN -    \left( w\partial_r v , v_t \right)_\sN \\
&\leq  C \left( \| \Delta_\sN v \|_{L_2(\sN)} + \| v \|_{L_2(\sN)} +  \| \nabla v \|_{L_4(\sN)}  \| v \|_{L_4(\sN)} +  \| \wdv v \|_{L_4(\sN)}  \| \partial_r v \|_{L_4(\sN)}   \right) \| v_t \|_{L_2(\sN)}  \\
& \leq C \left( \| v\|_{H^2_2(\sN)} +  \| v\|_{H^2_2(\sN)}^2      \right)  \| v_t \|_{L_2(\sN)}
\end{align*}
in view of  \eqref{PE}$_3$ and the Sobolev embedding  $H^2_2  (\sN)\hookrightarrow H_4^1 (\sN)$, cf. \cite[Theorem~14.2]{Ama13}.
Evaluating the above inequality at $t=0$, which gives
\begin{equation}
\label{est-vt-0}
{\|  v_t  (0)\|_{L_2(\sN)} \leq C  \left(  \| v_0\|_{H^2_2 (\sN)} +  \| v_0\|_{H^2_2 (\sN)}^2    \right). }
\end{equation}
Next, taking the temporal derivative of \eqref{PE}$_1$ and then multiplying by $\partial_t v$ yields
\begin{align*}
 \frac{1}{2} \partial_t \| v_t \|_{L_2(\sN)}^2 + \|   \nabla^\sN  v_t \|_{L_2(\sN)}^2   & =  \int_\Sigma \partial_t \pi_s \partial_t \left( \int_{-h}^0  \wdv v  \, dr  \right) \, d\mu_g + \int_{\sN} \Ric | v_t |^2 \, d\mu_{g_\sN}    \\
 &\quad  -   \left[  ( \nabla_{v_t} v  ,  v_t)_\sN  + (w_t \partial_r v , v_t)_\sN  \right]
  -     \left(\left(\nabla_v + w\partial_r  \right) v_t , v_t \right)_\sN   \\
 & \leq  C \| v_t \|_{L_2(\sN)}^2  -   \left[  ( \nabla_{v_t} v  ,  v_t)_\sN  + (w_t \partial_r v , v_t)_\sN \right]  ,
\end{align*}
where we have used Proposition~\ref{pro: divergence-free}  with $z=v_t$ and $q=2$,
\eqref{PE}$_3$, the boundary conditions for $w$.
The last two terms on the RHS can be estimated by using \eqref{G-N ineq}$_2$, \eqref{G-N ineq}$_5$, and \eqref{G-N ineq}$_6$ as  follows:
\begin{align*}
\int_\sN   \la \nabla_{v_t} v  , v_t\ra_{g_\sN}   \, d\mu_{g_\sN}  & \leq  C \|   \nabla^\sN  v \|_{L_2(\sN)} \| v_t \|_{L_4 (\sN)}^2  \leq C \|   \nabla^\sN  v \|_{L_2(\sN)} \| v_t \|_{L_2 (\sN)}^{1/2} \|   \nabla^\sN  v_t \|_{L_2 (\sN)}^{3/2} \\
&\leq  C \|   \nabla^\sN  v \|_{L_2(\sN)}^4 \| v_t \|_{L_2 (\sN)}^2       + \frac{1}{4} \|   \nabla^\sN  v_t \|_{L_2(\sN))}^2
\end{align*}
and
\begin{align*}
& \int_\sN   \la w_t \partial_r v ,  v_t\ra_{g_\sN}   \, d\mu_{g_\sN} \\
& \leq       \left(\int_{-h}^0   \| \wdv v_t (\cdot,z)\|_{L_2(\Sigma)} \, dr    \right) \| v_r \|_{L_2((-h,0);L_3(\Sigma))} \| v_t \|_ {L_2((-h,0);L_6(\Sigma))}\\
&\leq C \| \wdv v_t  \|_{L_2(\sN)} \left(  \| v_r \|_{L_2(\sN)}^{2/3} \| \nabla v_r \|_{L_2(\sN)}^{1/3} +  \| v_r \|_{L_2(\sN)} \right) \left(  \| v_t \|_{L_2(\sN)}^{1/3} \| \nabla v_t \|_{L_2(\sN)}^{2/3} +  \| v_t \|_{L_2(\sN)} \right)  \\
&\leq C \|   \nabla^\sN  v_t  \|_{L_2(\sN)}   \|   v_r \|_{L_2(\sN)}   \| v_t \|_{L_2(\sN)}  + C \|  \nabla^\sN  v_t  \|_{L_2(\sN)}^{5/3}  \| v_r \|_{L_2(\sN)}^{2/3} \|   \nabla^\sN  v_r \|_{L_2(\sN)}^{1/3} \| v_t \|_{L_2(\sN)}^{1/3}   \\
& \quad + C \|   \nabla^\sN  v_t  \|_{L_2(\sN)}   \| v_r \|_{L_2(\sN)}^{2/3} \|   \nabla^\sN  v_r \|_{L_2(\sN)}^{1/3} \| v_t \|_{L_2(\sN)}  + C \|   \nabla^\sN  v_t  \|_{L_2(\sN)}^{5/3}   \| v_r \|_{L_2(\sN)}  \| v_t \|_{L_2(\sN)}^{1/3}   \\
&\leq C \left( \|   v_r \|_{L_2(\sN)}^2   +  \| v_r \|_{L_2(\sN)}^4 \|   \nabla^\sN  v_r \|_{L_2(\sN)}^2 +  \| v_r \|_{L_2(\sN)}^{4/3} \|   \nabla^\sN  v_r \|_{L_2(\sN)}^{2/3}   + \| v_r \|_{L_2(\sN)}^6 \right) \| v_t \|_{L_2(\sN)}^2 \\
&\quad    +  \frac{1}{4} \|   \nabla^\sN  v_t \|_{L_2(\sN))}^2.
\end{align*}
Combing these estimates yields
\begin{align*}
&\| v_t (t) \|_{L_2(\sN)}^2 + \int_0^t \|   \nabla^\sN  v_t (\tau) \|_{L_2(\sN)}^2 \, d\tau  \leq \| v_t (0) \|_{L_2(\sN)}^2   \\
& \quad  + C \int_0^t \left( 1+\|   \nabla^\sN  v (\tau)\|_{L_2(\sN)}^4 +  \| v_r (\tau)\|_{L_2(\sN)}^4 \|   \nabla^\sN  v_r (\tau)\|_{L_2(\sN)}^2  + \| v_r (\tau)\|_{L_2(\sN)}^6  \right) \| v_t (\tau) \|_{L_2(\sN)}^2\, d\tau .
\end{align*}
Then, {Gronwall's} inequality,  \eqref{combine-est-1}, and \eqref{est-vt-0} imply
\begin{equation}
\label{est-vt-L2}
\begin{split}
& \| v_t (t) \|_{L_2(\sN)}^2 + \int_0^t \|   \nabla^\sN  v_t (\tau) \|_{L_2(\sN)}^2 \, d\tau  \\
& \leq C   \| v_t (0) \|_{L_2(\sN)}^2
 \exp \big( \! \!\int_0^T \!\!\! \big( 1+\|   \nabla^\sN  v  \|_{L_2(\sN)}^4  +  \| v_r  \|_{L_2(\sN)}^4 \|   \nabla^\sN  v_r  \|_{L_2(\sN)}^2  + \| v_r \|_{L_2(\sN)}^6  \big)  (\tau) \, d\tau \big) \\
&=: B_4( \|v_0 \|_{ H^2_2(\sN) },    T),
\end{split}
\end{equation}
for each $t\in [0,T]$.

\medskip\noindent
{\bf Step 7:} $L_\infty((0,T); {L_3(\sN) } )$-estimate for $v_r =\partial_r v$. We test  \eqref{PE}$_1$ with  $-\partial_r (|v_r|_g v_r)$,

\medskip\noindent
We first observe that  Green's identity~\eqref{Green-1} and integration by parts with respect to $r$  yields
\begin{align*}
 (\Delta_\sN v, \partial_r(|v_r| v_r))_\sN &=  -(  \nabla^\sN  v,    \nabla^\sN  \partial_r (|v_r| v_r) )_\sN
 + (  \nabla^\sN_{\nu} v,  \partial_r(|v_r| v_r))_{\pa\sN}\\
& =   (  \nabla^\sN   v_r ,   \nabla^\sN   (|v_r| v_r))_\sN  \\
&\quad -\int_\Sigma \Big[ \la   \nabla^\sN  v ,    \nabla^\sN ( |v_r| v_r) \ra_{g_\sN} - \la \partial_r v,  \partial_r(|v_r| v_r)\ra_\sN
 \Big]^{r=0}_{r=-h} \, d\mu_g \\
& =   (  \nabla^\sN   v_r ,   \nabla^\sN   (|v_r| v_r))_\sN .
\end{align*}
In the derivation above we used \eqref{splitting-2} to obtain
\begin{align*}
\la   \nabla^\sN  v ,    \nabla^\sN ( |v_r| v_r) \ra_{g_\sN}
& =  \la \nabla  v ,  \nabla ( |v_r| v_r)  \ra_{\sN} + \la  \partial_r v, \partial_r ( |v_r| v_r) \ra_{\sN}.
\end{align*}
Moreover,  we used the boundary conditions to deduce
$$
\int_\Sigma\Big[ \la \nabla  v ,  \nabla ( |v_r| v_r)  \ra_{\sN} \Big]^{r=0}_{r=-h} \, d\mu_g =0.
$$
Finally, invoking \eqref{computation} yields
$$
 (\Delta_\sN v, \partial_r(|v_r| v_r))_\sN =
  (  \nabla^\sN   v_r ,   \nabla^\sN   (|v_r| v_r))_\sN = (  \nabla^\sN  v_r , | v_r |   \nabla^\sN  v_r)_\sN
  +  \frac{4}{9} \left\| \gd | v_r|_g^{3/2} \right\|_{L_2(\sN)}^2 .
$$
Integrating by parts with respect to $r$ and using the boundary conditions for $v$ and $w$,
it follows from Proposition~\ref{pro: divergence-free}  with $z= v_r$ and $q=3,$
and from $\eqref{PE}_3$  that
\goodbreak
\begin{align*}
 -(\nabla_v v+ w v_r , \partial_r (|v_r|_g v_r))_\sN
&= - \int_\Sigma \Big[ \la  \nabla_v v + w v_r, |v_r|_g v_r\ra_{g_\sN} \Big]^{r=0}_{r=-h } \\
& \quad + (\nabla_v v_r + w \pa_r v_r ,  |v_r|_g v_r )_\sN  +  (\nabla _{v_r} v + (\pa_r w) v_r ,  |v_r|_g v_r )_\sN  \\
& =  (\nabla _{v_r} v ,  |v_r|_g v_r )_\sN  -   \int_{\sN}   \wdv v \,  |v_r|_g^3    \, d\mu_{g_\sN}.
\end{align*}

Combining, we have
\begin{align*}
& \frac{1}{3}  \partial_t \| v_r \|_{L_3(\sN)}^3  + \frac{4}{9} \left\| \gd | v_r|_g^{3/2} \right\|_{L_2(\sN)}^2
+ \left\|  | v_r|_g^{1/2}   \nabla^\sN  v_r \right\|_{L_2(\sN)}^2\\
&=  \int_{\sN}   \wdv v   |v_r|_g^3    \, d\mu_{g_\sN}
      -   \left(  \nabla_{v_r} v  , v_r |v_r|_g   \right)_\sN +\int_{\sN}  \la \Ric \, v_r  , v_r |v_r |_g\ra_g  \, d\mu_{g_\sN}    \\
&   \hspace{6cm}    - \int_{\Sigma_b} \la \wgd \pi  ,  v_r |v_r|_g  \ra_g   \, d\mu_{g_\sN} \\
&  = :{IV_1 + \cdots + IV_4.}
\end{align*}

It follows from $\eqref{G-N ineq}_2$ that
\begin{align*}
{IV_1+IV_2}\, & \le  { C} \int_{\sN}   |\nabla v|_g   |v_r|_g^3    \, d\mu_{g_\sN}  \leq  { C} \| \nabla v \|_{L_2(\sN)} \left\| |v_r|_g^3 \right\|_{L_2(\sN)}
=   { C} \| \nabla v \|_{L_2(\sN)} \left\| |v_r|_g^{3/2} \right\|_{L_4(\sN)}^2 \\
&\leq  C \| \nabla v \|_{L_2(\sN)} \left\| |v_r|_g^{3/2} \right\|_{L_2(\sN)}^{1/2}  \left\| \gd |v_r|_g^{3/2} \right\|_{L_2(\sN)}^{3/2}  \\
&\leq C \| \nabla v \|_{L_2(\sN)}^4 \left\| |v_r|_g^{3/2} \right\|_{L_2(\sN)}^2 + \frac{2}{9}   \left\| \gd |v_r|_g^{3/2} \right\|_{L_2(\sN)}^2 \\
&= C \| \nabla v \|_{L_2(\sN)}^4   \| v_r \|_{L_3(\sN)}^3 + \frac{2}{9}   \left\| \gd |v_r|_g^{3/2} \right\|_{L_2(\sN)}^2.
\end{align*}
Direct computations show
\begin{align*}
IV_3 \leq C \| v_r \|_{L_3(\sN)}^3 .
\end{align*}
By the trace theorem and Sobolev embedding theorem, cf. \cite[Theorems 10.1 and 14.2]{Ama13}, we can conclude that
$$
H^{3/4}_2(\sN) \hookrightarrow L_{8/3}(\Sigma_b) .
$$
Then applying interpolation theory, cf. \cite[Theorem 13.1]{Ama13}, one derives
\begin{align*}
IV_4\, &\leq \|  \gd_\Sigma \pi_s \|_{L_2(\Sigma_b)} \left\| |v_r|_g^2 \right\|_{L_2(\Sigma_b)} = \| \gd_\Sigma \pi_s \|_{L_2(\Sigma_b)} \left\| |v_r|_g^{3/2} \right\|_{L_{8/3}(\Sigma_b)}^{4/3} \\
&\leq C \| \gd_\Sigma \pi_s \|_{L_2(\Sigma_b)} \left\| |v_r|_g^{3/2} \right\|_{H^{3/4}_2(\sN)}^{4/3} \\
&\leq  C \| \gd_\Sigma \pi_s \|_{L_2(\Sigma_b)} \left\| |v_r|_g^{3/2} \right\|_{L_2(\sN)}^{1/3} \left\| \gd |v_r|_g^{3/2} \right\|_{L_2(\sN)} \\
&\leq  C \| \gd_\Sigma \pi_s \|_{L_2(\Sigma_b)}^2 \left\|  v_r  \right\|_{L_3(\sN)}  + \frac{2}{9}  \left\| \gd |v_r|_g^{3/2} \right\|_{L_2(\sN)}^2 \\
&\leq C \| \gd_\Sigma \pi_s \|_{L_2(\Sigma_b)}^2 \left( \left\|  v_r  \right\|_{L_3(\sN)}^3 + 1 \right)  + \frac{2}{9}  \left\| \gd |v_r|_g^{3/2} \right\|_{L_2(\sN)}^2.
\end{align*}
Combining the estimates for $IV_1-IV_4$ gives
\begin{align*}
  \partial_t \| v_r \|_{L_3(\sN)}^3
   \leq C   \| \gd_\Sigma \pi_s \|_{L_2(\Sigma_b)}^2     + C \left( 1 +   \| \gd_\Sigma \pi_s \|_{L_2(\Sigma_b)}^2  + { \|   \nabla^\sN  v  \|_{L_2(\sN)}^4}  \right)\| v_r \|_{L_3(\sN)}^3 .
\end{align*}
Then {Gronwall's inequality} implies
\begin{equation}
\label{est-vr-L3}
\begin{split}
 \| v_r (t)\|_{L_3(\sN)}^3
& \leq C  \left(\| v_r (0)\|_{L_3(\sN)}^3 + \int^T_0  \| \gd_\Sigma \pi_s (\tau)\|_{L_2(\Sigma_b)}^2    \, d\tau \right)  \times \\
&  \exp \left( C   \int_0^T \left( 1 +   \| \gd_\Sigma \pi_s \|_{L_2(\Sigma_b)}^2   + { \|   \nabla^\sN  v  \|_{L_2(\sN)}^4} \right)(\tau) \, d\tau \right) \\
&=: B_5( \|v_0 \|_{ H^2_2(\sN) } , T),
\end{split}
\end{equation}
{for each $t\in [0,T]$.}

\medskip\noindent
{\bf Step 8:} $L_\infty((0,T); { H^2_2(\sN)} )$-estimate for $v$.

\medskip\noindent
It follows from Theorem~\ref{thm: H-infinity calculus} that there exists some $\lambda>0$ such that
\begin{align*}
\|v\|_{H^2_2(\sN)} \leq \| (\lambda + \sA) v \|_{L_2(\sN)} \leq \lambda \| v \|_{L_2(\sN)}  + \|\sA v \|_{L_2(\sN)} .
\end{align*}
Then \eqref{PE-abstract} implies
\begin{align*}
\| v \|_{H^2_2(\sN)} & \leq \lambda \| v \|_{L_2(\sN)}  + \|\sA v \|_{L_2(\sN)} \\
& \leq   \lambda \left\| v \right\|_{L_2(\sN)}   + \left\| \partial_t v \right\|_{L_2(\sN)}  + \left\| \nabla_v v \right\|_{L_2(\sN)} + \left\| w \partial_r v \right\|_{L_2(\sN)} .
\end{align*}
Most of the terms on the RHS have been evaluated previously. We will only estimate the third and fourth terms.
It follows from the Sobolev embedding theorem, cf. \cite[Theorem 14.2]{Ama13},  that
\begin{align*}
\left\| \nabla_v v \right\|_{L_2(\sN)}  & \leq  \| v \|_{L_6(\sN)} \|   \nabla^\sN  v \|_{L_3(\sN)}  \leq  C \| v \|_{H^1_2(\sN)}  \| v \|_{H^2_2(\sN)} \leq C \| v \|_{H^1_2(\sN)}^2 + \frac{1}{4} \| v \|_{H^2_2(\sN)}^2  \\
\left\| w \partial_r v \right\|_{L_2(\sN)} &\leq \| w \|_{L_6(\sN)} \| v_r \|_{L_3(\sN)} \leq C \| v \|_{H^2_2(\sN)} \| v_r \|_{L_3(\sN)} \leq C \| v_r \|_{L_3(\sN)}^2 + \frac{1}{4} \| v \|_{H^2_2(\sN)}^2 .
\end{align*}
Combining Lemma~\ref{lem:basic-L2-est},  \eqref{est-v-H1},  \eqref{est-vt-L2}, and \eqref{est-vr-L3}, we can infer that
\begin{equation}
\label{est-v-H2}
\begin{split}
\| v  (t)\|_{H^2_2(\sN)}    \leq  B_6( \|v_0 \|_{ H^2_2(\sN) },    T),
\end{split}
\end{equation}
{for each $t\in [0,T]$.}
\end{proof}

\medskip\noindent
\subsection{Global existence}
Now we are ready to state and prove the main result of this article.

\begin{theorem}\label{thm: global}
{
Let   $1<p,q<\infty$ with $1/p+1/q\leq 1$, and $\mu \in [1/p+1/q,1]$.
Assume that
$$
v_0 \in X_{\mu-1/p,p} .
$$
Let $v\in\mathbb{E}_{1,\mu}(a)$ be the solution to \eqref{PE} asserted by Proposition~\ref{prop:local existence}(a) on some maximal interval of interval of existence $[0,T_+(v_0))$.
Then  $v$   exists globally, i.e, $T_+(v_0)=\infty$.
}
\end{theorem}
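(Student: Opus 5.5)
The plan is to argue by contradiction. Suppose $T_+:=T_+(v_0)<\infty$. We reduce to the Hilbert setting $p=q=2$ with $H^2$ data, apply the a priori bound of Theorem~\ref{thm: a priori est}, and then use the blow-up criterion of Proposition~\ref{prop:local existence}(e).

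\textbf{Step 1: regularize the solution.} By Proposition~\ref{prop:local existence}(c),(d), the solution satisfies $v\in C^\infty((0,T_+)\times\sN;T\Sigma)$. Fix $\tau\in(0,T_+)$. Then $v(\tau)\in H^2_{2,\bar\sigma}(\sN;T\Sigma)$, $v(\tau)=0$ on $\Sigma_b$ and $\partial_r v(\tau)=0$ on $\Sigma_u$. The last two facts hold because $v(t)\in X_1$ for $t>0$ and $\sN$ is compact, so smoothness gives membership in every $H^k_q$.

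\textbf{Step 2: solve in the $L_2$ setting.} Consider problem \eqref{PE} with $p=q=2$, $\mu=1$ and initial value $v(\tau)$. This is admissible since $X_{1/2,2}=\{u\in H^1_{2,\bar\sigma}: u=0 \text{ on }\Sigma_b\}$ by Proposition~\ref{prop: interpolation}, and $v(\tau)$ lies there. Call the resulting solution $u$, with maximal time $T_+^{(2)}$.

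\textbf{Step 3: identify the two solutions.} We check that $u(t)=v(t+\tau)$ on the common existence interval. Both are smooth strong solutions for $t>0$ and lie in each other's regularity classes on compact subintervals away from $0$. Uniqueness in $\bE_{1,1}$ (Proposition~\ref{prop:local existence}(a) and Lemma~\ref{lem:F-cont}) therefore applies, after noting that $t\mapsto v(t+\tau)$ belongs to $H^1_2((0,T);L_2)\cap L_2((0,T);H^2_2)$ for $T<T_+-\tau$. Consequently $T_+^{(2)}\ge T_+-\tau$. Conversely, if $u$ exists past $T_+-\tau$ and is smooth there, then $u(t-\tau)$ continues $v$, provided we know $u$ is bounded in the $(q,p)$-phase space.

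\textbf{Step 4: uniform bounds and conclusion.} Apply Theorem~\ref{thm: a priori est} on $[0,T]$ with $T:=T_+-\tau$. This gives $\sup_{t<T_+-\tau}\|u(t)\|_{H^2_2}\le B_1(\|v(\tau)\|_{H^2_2},T_+)<\infty$, so $v$ is bounded in $H^2_2(\sN)$ on $[\tau,T_+)$. This bound has to be converted into boundedness in $X_{\bar\mu-1/p,p}$ for general $(p,q)$, which is where the obstacle lies. $H^2_2$ in three dimensions embeds only into $H^{s}_q$ with $s<2-3/2+3/q$, and that regularity may be lower than the $B^{2\bar\mu-2/p}_{qp}$ that is needed when $q$ is large.

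I would resolve this with a bootstrap, instead of insisting on a direct embedding. First, boundedness in $H^2_2$ implies boundedness in the $p=q=2$ space $X_{\bar\mu-1/2,2}$ with $\bar\mu=1>\mu$ corresponding to $1/2+1/2$. Proposition~\ref{prop:local existence}(e) in the $L_2$ setting then gives $T_+^{(2)}=\infty$. Next, the time-shift uniform continuation argument of (e)/(b) yields uniform bounds for $u$ in $\bE_{1,1}(T)$ on any finite interval. Using Lemma~\ref{lem:bilinear}(a) and maximal regularity \eqref{def-S} in the $L_q$ scale, we raise integrability in finitely many Sobolev steps. The nonlinearity $\sF(u)$ is bounded in $L_p((t_0,T);L_q)$ once $u$ is bounded in $H^{1+1/q}_q$, which follows from the previous step via embeddings. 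This produces boundedness of $v$ in $X_{1-1/p,p}$ on $[\delta,T_+)$.

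Finally, Proposition~\ref{prop:local existence}(e) with $\bar\mu=1>\mu$ gives $T_+=\infty$, contradicting the assumption. (If $\mu=1$, we instead run the argument with a slightly smaller weight, or use the smoothing of (c) to argue directly.) The step I expect to be hardest is exactly this passage from the $H^2_2$ a priori estimate to uniform control in the $L_q$ trace space with time-independent constants near $T_+$.
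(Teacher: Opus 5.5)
Steps~1--3 and your use of Theorem~\ref{thm: a priori est} to obtain a uniform $H^2_2(\sN)$ bound on $[\tau,T_+)$ are essentially what the paper does; the paper applies the estimate to the time-shifted smooth solution. The gap is exactly where you flag it. The passage from that bound to a bound in some $X_{\bar\mu-1/p,p}$ with $\bar\mu>\mu$ is only announced (``raise integrability in finitely many Sobolev steps''). Its inductive step, boundedness in $H^{1+1/q}_q$ ``via embeddings'', relies on the same kind of embedding you had just observed to fail for large $q$.

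The first stage of your bootstrap is also wrong as stated. For $p=q=2$ the only admissible weight is $\mu=1$, since $\mu\in[1/2+1/2,1]$. So there is no $\bar\mu\in(\mu,1]$, and Proposition~\ref{prop:local existence}(e) gives nothing in the $L_2$ setting. That stage is superfluous anyway: for the contradiction you only need boundedness on $[\tau,T_+)$, not $T_+^{(2)}=\infty$.

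The missing idea is that your obstacle comes from insisting on the given weight $\mu$. Let $\mu_c=1/p+1/q$ be the critical weight. Then $\bE_{1,\mu}(a)\hookrightarrow\bE_{1,\mu_c}(a)$ and $X_{\mu-1/p,p}\hookrightarrow X_{\mu_c-1/p,p}$. Since weights only matter near $t=0$, your solution is also the $\mu_c$-solution, with the same $T_+$. It therefore suffices to apply (e) with $\mu=\mu_c$.

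Then you only need a bound in $X_{\bar\mu-1/p,p}$ with $\bar\mu=\mu_c+\eta/2$, that is, in $B^{2/q+\eta}_{qp}(\sN;T\Sigma)$. The boundary conditions and $\wdv\bar v=0$ hold automatically, because $v(t)\in X_1$ for $t>0$. This bound follows at once from the $H^2_2$ estimate:
\begin{itemize}
\item $H^2_2(\sN)\hookrightarrow H^{1/2+3/q}_q(\sN)$, or $H^2_q(\sN)$ if $q<2$;
\item $H^{s_1}_q\hookrightarrow B^{s_0}_{qp}$ for $s_0<s_1$;
\item $2/q+\eta$ lies below the available regularity for small $\eta>0$.
\end{itemize}
This is the paper's proof, and no bootstrap in the $L_q$ scale is needed. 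Your parenthetical about a ``slightly smaller weight'' is the right move, but use it in every case, with the smallest weight $\mu_c$.

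In the borderline case $1/p+1/q=1$, $\mu_c=1$ leaves no room for $\bar\mu$. There you should run the argument from the proof of (e) directly. That proof only uses relative compactness of $\{v(t)\}_{t\in[\delta,T_+)}$ in $X_{\mu_c-1/p,p}$. The $H^2_2$ bound gives this through the compact embedding $H^{s_1}_q\hookrightarrow B^{2/q}_{qp}$ with $s_1>2/q$.
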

\begin{proof}
By Proposition \ref{prop:local existence}(d) we have in particular $v\in C([\delta,a]; H^2_2(\sN; T\Sigma))$
for all $0<\delta<a < T_+ (v_0)$. Then, Theorem~\ref{thm: a priori est} shows that
$v\in BC([ \delta ,T_+) ; H^2_2(\sN; T\Sigma))$, hence
$$v\in BC([\delta ,T_+) ; H^s_q(\sN; T\Sigma)), \quad s\leq \frac{1}{2} + \frac{3}{q},$$
by Sobolev embedding, see \cite[Theorem~14.2]{Ama13}. It follows from \eqref{Def:Besov} and  interpolation theory \cite[Theorem~13.1]{Ama13}  that
$$
H^{s_1}_q (\sN; T\Sigma)) \hookrightarrow B^{s_0}_{qp}(\sN; T\Sigma), \quad 0<s_0<s_1<\infty,
$$
and therefore
$$
v\in    BC([\delta ,T_+) ; B^{2/q + \eta}_{qp}(\sN; T\Sigma))
$$
for each $\eta\in (0,1/2+1/q)$.
Setting $\bar \mu = 1/p+1/q +\eta/2$, we see that
$$X_{\bar \mu-1/p,p} \subset B^{2/q+\eta}_{qp}(\sN; T\Sigma). $$
We can now apply Proposition~\ref{prop:local existence}(e) with $\mu_c = 1/p +1/q$
to conclude that $T_+(v_0)=\infty$.
Indeed, we  just need to observe that the assumption $v_0\in X_{\mu-1/p,p}$ implies
$v_0\in X_{\mu_c-1/p, p}$, so that Proposition~\ref{prop:local existence}(e) applies with $\mu= \mu_c$.
\end{proof}

\appendix

\section{Some fundamental results for Riemannian manifolds}
\label{S: Appendix A}

\subsection{Riemannian manifolds}
\label{sub: Riemannian}
In this section, we consider a general Riemannian manifold \((M,g)\) equipped with its Levi--Civita connection \(\nabla\).
We adopt notation that is standard in differential geometry and collect several definitions, identities, and auxiliary results that will be used throughout the manuscript. For the convenience of the reader, we also briefly recall some basic properties of the geometric objects and differential operators appearing in our analysis.

Let $(M,g)$ be a smooth $n$-dimensional  Riemannian manifold, and let $\la \cdot, \cdot \ra_g$  denote the inner product.
We write  $T{M}$ and $T^{\ast}{M}$ for the tangent and the cotangent bundles of $M$, respectively, and define
$$
T^1_1{M}:=TM\otimes T^*M,
$$
the bundle of $(1,1)$-tensors on $M$.

Furthermore,  $\Gamma(M ;TM)$,  $\Gamma (M; T^*M)$, and  $\Gamma(M;  T^{1}_{1}{M})$  denote the spaces of  smooth sections of
 $TM$, $T^*M$,  and $T^1_1(M)$, respectively.

 Given a local coordinate system, we denote the associated local bases of
 the tangent and cotangent bundles by
$$\{\partial_i : 1\le i\le n\} \quad\text{and}\quad   \{dx^j : 1\le j\le n\},$$
respectively.
For  $X\in \Gamma(M; TM)$ and $S\in \Gamma(M; T^1_1(M))$
the corresponding  local representations are
$$
X= X^i \partial_i,\qquad S=S^i_j \partial_i \otimes dx^j.
$$
The pointwise inner products of vector fields \(X,Y\in\Gamma(M;TM)\) and \((1,1)\)-tensors \(S,T\in\Gamma(M;T^1_1M)\) are given in local coordinates by
\begin{equation}
\label{(1,1)-inner}
\la X, Y\ra_g = g_{ij} X^i Y^j, \qquad \la S, T \ra_g = g^{jk} g_{i\ell}  S^i_j T^\ell_k,
\end{equation}
respectively, where $g_{ij}= \la \partial_i, \partial_j \ra_g$ and  $(g^{ij} )$ denotes the inverse of the metric tensor $(g_{ij})$.

\medskip
Every tensor $S\in\Gamma(M;T^1_1M)$ induces a linear map
$
S:\Gamma(M;TM)\to\Gamma(M;TM)
$
via
$$
SX=
(S^i_j\,\partial_i\otimes dx^j)X = S^i_jX^j\,\partial_i,
\qquad
X=X^j\partial_j\in\Gamma(M;TM).
$$
Moreover, the pointwise estimate
$$
|SX|_g \le |S|_g\,|X|_g
$$
holds, where
$|S|_g = \sqrt{\langle S,S\rangle_g }$.

\medskip
Let \(\nabla\) denote the Levi--Civita connection on \(M\). For a vector field
$
X=X^i\partial_i\in \Gamma(M;TM),
$
the covariant derivative of \(X\) is given in local coordinates by
\begin{equation}
\label{covariant-local}
\nabla_j X= \bigl(\partial_j X^i+\Gamma^i_{jk}X^k\bigr)\partial_i =:X^i_{| j}\,\partial_i\, ,
\end{equation}
where \(\nabla_j=\nabla_{\partial_j}\) and \(\Gamma^i_{jk}\) denote the Christoffel symbols of \(\nabla\).
The covariant derivative
$$
\nabla:C^1(M;TM)\to C(M;T^1_1M)
$$
is therefore represented locally as
\begin{equation}
\label{covariant-derivative}
\nabla X
=\nabla_jX\otimes dx^j = \bigl(\partial_jX^i+\Gamma^i_{jk}X^k\bigr) \partial_i\otimes dx^j
= X^i_{|j}\,\partial_i\otimes dx^j.
\end{equation}
For $X,Y\in C^1(M;TM)$, we have $\nabla_X Y= (\nabla Y) X$ and  the pointwise estimate
\begin{equation}
\label{pointwise-estimate}
|\nabla_XY|_g \le |\nabla Y|_g\,|X|_g
\end{equation}
holds.
For a vector field
$X=X^i\partial_i\in C^1(M;TM),$
the divergence of $X$ is defined by
$$
\dv X:=
X^i{}_{|i} = \partial_iX^i+\Gamma^i_{ik}X^k.
$$
For a scalar function \(\phi\in C^1(M;\bR)\), the gradient
$
\operatorname{grad}_M\phi\in C(M;TM)
$
is defined by
$$
\langle \operatorname{grad}_M\phi,X\rangle_g = \langle \nabla\phi,X\rangle_g
= \nabla_X\phi,
\quad X\in C(M;TM),
$$
where
$\nabla\phi\in C(M;T^*M) $
denotes the covariant derivative of $\phi$.
In local coordinates, the gradient is given by
$$
\operatorname{grad}_M\phi
=g^{ij}\,\partial_j\phi\,\partial_i\, .
$$

For the curvature tensor $R(X,Y)Z:= [\nabla_X \nabla_Y]Z - \nabla_{[X,Y]}Z$, with $X,Y,Z \in \Gamma(M; TM)$,
we use the following convention (as in \cite{Pet06, SaTu20}, for instance)
$$
R\left(\partial_i, \partial_j\right) \partial_k = R^\ell_{ijk } \partial_\ell.
$$
Finally, the Ricci $(2,0)$-tensor is  defined by ${\rm Ric}_{jk}=R^i_{ijk}.$

\bigskip\noindent
Suppose $X$ a vector field on $M$.
We show that
\begin{equation}
\label{computation}
\la \nabla X,\nabla(| X |_g X) \ra_g =|X|_g\,|\nabla X|_g^2+ \frac{4}{9}\,\big|\gd_M |X|_g^{3/2}\big|_g^2,
\end{equation}
where  $|X|_g  = \la X,X \ra^{1/2}_g.$

In local coordinates,
$X = X^i \partial_i$ and  $ |X|_g^2 = g_{ij} X^i X^j.$
By \eqref{covariant-derivative}, the pointwise norm of the  $(1,1)$-tensor $\nabla X$ is given by
$$
|\nabla X|^2_g= g^{jk} g_{i\ell} X^i_{| j} X^\ell_{| k}.
$$
Employing the product rule  $\nabla_j  (|X|_g  X)=( \partial_j |X|_g  )X + |X|_g  \nabla_j X $ results in
\begin{align*}
\la \nabla X,\nabla(|X|_g  X) \ra_g
=g^{jk} g_{i\ell} X^i_{|j} \left[ |X|_g  X^\ell_{| k} + (\partial_k |X|_g  ) X^\ell  \right]
= |X|_g  |\nabla X|^2_g + g^{jk} g_{i\ell} X^i_{|j}   X^\ell \partial_k |X|_g  .
\end{align*}
Using the metric compatibility of the Levi-Civita connection, we obtain
\begin{align*}
 g^{jk} g_{i\ell} X^i_{| j}  X^\ell \, \partial_k |X|_g
 =  g^{jk} \la \nabla_j X, X\ra_g \, \partial_k |X |
 = |X|_g  \,  g^{jk} \, \partial_j |X|_g  \,  \partial_k |X|_g
 = |X|_g  \big|\gd_M|X|_g  \big|^2 .
\end{align*}
Hence
$$
\la \nabla X,\nabla(| X |_g X) \ra_g =|X|_g  \,|\nabla X|_g^2 +  |X|_g  \big|\gd_M|X|_g  \big|^2 .
$$
Observing that
$$
\gd (|X|_g^{3/2}) =\frac{3}{2}|X|_g^{1/2} \gd_M\,|X|_g
$$
yields the assertion in \eqref{computation}.

An inspection of the arguments presented above also shows that
\begin{equation}
\label{computation-2}
\la \nabla X,\nabla(| X |^2_g X) \ra_g =|X|^2_g\,|\nabla X|^2_g+ \frac{1}{2}\,\big|\gd_M|X|^2_g\big|^2_g.
\end{equation}
Indeed, following the proof given above, one observes that by the metric compatibility
$$
  g^{jk} \la \nabla_j X, X\ra_g \, \partial_k |X |^2_g
 = \frac{1}{2}  g^{jk} \, \partial_j |X|^2_g \,  \partial_k |X |^2_g
 = \frac{1}{2}\,\big|\gd_M|X|^2_g\big|^2_g.
$$


\subsection{The product manifold $\Sigma \times [-h,0]$.}
\label{subsec: product manifolds}
In this section, for the reader's convenience we summarize a number of results
that are mostly well-known and have been used throughout the manuscript.

We consider the product manifold $(\sN,g_{\sN})$ with  product metric $g_\sN$, where
$$
\sN=\Sigma\times[-h,0],
\qquad
g_{\sN}=g\oplus g_{\bR},
$$
and $\Sigma$  is a closed hypersurface of $\bR^3$. As before, $g=g_\Sigma$ is the Riemannian metric on $\Sigma$
induced by the ambient Euclidean metric.
At a point \((p,r)\in \sN=\Sigma\times[-h,0]\), the tangent space decomposes as
$$
T_{(p,r)}\sN=T_p\Sigma \oplus T_r[-h,0] =T_p\Sigma \oplus \bR \cong T_p\Sigma \times \bR.
$$
Thus, every tangent vector \(u\in T_{(p,r)}\sN\) admits a unique decomposition
$$
u(p,r)= v(p,r) + w(p,r)\partial_r \cong (v(p,r), w(p,r)),
$$
where $v(p,r)\in T_p\Sigma$ and $w(p,r)\in \bR$. We write in local coordinates
\begin{equation}
\label{tangent-local}
u (p,r)= v^i(p,r) \tau_i + w(p,r)  \tau_3 =  v^i(p,r) \partial_i + w(p,r)  \partial_r .
\end{equation}
Since the metric tensor is block diagonal,
$$
[g_{\sN}]=
\begin{bmatrix}
g_{ij} & 0\\
0 & 1
\end{bmatrix},
$$
a direct computation shows that the  Christoffel symbols of $\sN$ satisfy
\begin{equation}
\label{Chris-N}
\Gamma^k_{ij}(\sN)=\Gamma^k_{ij}(\Sigma),\qquad
\Gamma^3_{ij}=\Gamma^k_{i3}=\Gamma^3_{i3}=\Gamma^k_{33}=0.
\end{equation}
Hence, the only nonvanishing Christoffel symbols are those inherited from $(\Sigma,g_\Sigma)$;
all Christoffel symbols involving the $r$-direction vanish.
Let
$$u_i = v_i + w_i \partial_r \cong (v_i,w_i), \qquad i=1,2,
$$
be   vector fields on \(\sN\) as in~\eqref{tangent-local}, where
$$
v_i=v_i(p,r)\in T_p\Sigma,
\quad
w_i=w_i(p,r)\in\mathbb R.
$$
Then employing \eqref{Chris-N}  one shows that the Levi--Civita connection $\nabla^\sN$  of  $\sN$ is given by
\begin{equation}
\label{covariant-N}
\nabla^{\sN}_{(v_1+w_1\partial_r)}(v_2+w_2\partial_r)
=\nabla_{v_1} v_2+w_1\partial_r v_2 + \big(v_1(w_2)+w_1\partial_r w_2\big)\partial_r,
\end{equation}
or equivalently,
\begin{equation*}
\nabla^{\sN}_{(v_1,w_1)}(v_2,w_2)
= \bigl( \nabla_{v_1}v_2+w_1\partial_r v_2,\, v_1(w_2)+w_1\partial_r w_2 \bigr).
\end{equation*}
Here, $\nabla$ stands for the covariant derivative on $\Sigma$, and
$v(w)$ denotes the directional derivative of $w$ in the direction of $v$, given in local coordinates by
$v(w)= v^j\partial_j w. $
In particular, is follows from~\eqref{covariant-N} that
\begin{equation*}
\nabla^\sN_v v= \nabla_v v, \quad
\nabla^\sN_{\partial_r}(v+w\partial_r)=\partial_r v + \partial_rw \partial_r, \quad
\nabla^\sN_{\partial_r} \partial_r =0, \quad
\nabla^\sN_v \partial_r =0.
\end{equation*}
Suppose $u,v: \sN\to T\Sigma$ are tangential to $\Sigma$. Then it follows readily from~\eqref {(1,1)-inner} and~\eqref{Chris-N}   that
\begin{equation}
\label{splitting-2}
\langle \nabla^N u,\nabla^N v\rangle_{g_\sN}
= \langle \nabla u,\nabla  v\rangle_g + \langle \partial_r u,\partial_r v\rangle_g.
\end{equation}

\noindent
Let
$$
\Delta_{\sN}:=\operatorname{tr}\bigl((\nabla^\sN)^2\bigr)
$$
be the connection Laplacian associated with the connection $\nabla^\sN$.
Let
$ u=v+w\,\partial_r, $
where
$$
v=v(p,r)\in T_p\Sigma, \qquad w=w(p,r)\in\bR.
$$
Since  the only nonvanishing Christoffel symbols are those inherited from $(\Sigma,g_\Sigma)$,
 $\Delta_\sN$ decomposes into its tangential and vertical parts.
More precisely, one obtains
\begin{equation*}
\Delta_{\sN}u =\bigl( \Delta_\Sigma v+\partial_r^2 v \bigr)
+ \bigl( \Delta_B w+\partial_r^2 w \bigr)\partial_r.
\end{equation*}
Here,
\begin{itemize}
\item $\Delta_\Sigma$ denotes the connection Laplacian on \((\Sigma,g_\Sigma)\), acting on  tangential vector fields  $v(\cdot,r)$ for each fixed $r$,
\vspace{1mm}
\item $\Delta_B$ denotes the Laplace--Beltrami operator on \((\Sigma,g_\Sigma)\), acting on scalar functions $w(\cdot,r)$,
\vspace{-2mm}
\item $\partial_r^2$ denotes the second derivative with respect to the vertical variable \(r\).
\end{itemize}
Let $ f :\sN \to \bR$  be a an integrable  function. Then the integral of \( f \) over the product manifold is computed using Fubini's theorem:
\begin{equation*}
\int_{\sN} f(p, r) \, d\mu_{g_\sN}
= \int_\Sigma \left( \int^0_{-h} f(p, r) \, dr \right)\,d\mu_g
= \int^0_{-h} \left( \int_\Sigma f(p, q) \,d\mu_g \right)dr,
\end{equation*}
where
$d\mu_{g_\sN} $ and $d\mu_g$ denote the Riemannian volume element of $(\sN, g_\sN)$ and $(\Sigma, g)$, respectively.

\bigskip\noindent
We are now ready to state the divergence theorem for $\sN$.
For this, we first observe that the boundary $\partial \sN$ of $\sN$ is given by
$$
\partial \sN = (\Sigma \times\{-h\}) \cup (\Sigma \times\{0\}) = \Sigma_b \cup \Sigma_u.
$$
The outward unit normal vector field $\nu=\nu_{\partial\sN} $ on the boundary $\partial \sN$ is given as follows.

\smallskip
(i) On the upper boundary $\Sigma_u$,  \quad$\nu = \partial_r$.

\smallskip
(ii) On the lower boundary $\Sigma_u$, \quad $\nu = -\partial_r.$

\medskip\noindent
Suppose $\phi$ is a scalar function defined on $\sN$.
Then we  have the following integration by parts formula (the divergence theorem):
\begin{equation*}
\begin{aligned}
 \int_{\sN} (\dv_{\sN} u)\phi \,d\mu_{g_\sN} &= -\int_{\sN} \la  u,\gd_{\sN}\phi\ra_{g_\sN} \, d\mu_{g_{\sN}}
 + \int_{\partial \sN} \la u , \nu \ra_{g_\sN} \phi\,  dS  \\
& = - \int_{\sN} \nabla^\sN_u \phi\,d\mu_{g_\sN}  +  \int_\Sigma \Big[\la u , \nu \ra_{g_\sN} \phi \Big]^{r=0}_{r=-h}\,  d\mu_g.
\end{aligned}
\end{equation*}
For the closed manifold $\Sigma$, the divergence theorem states
\begin{equation}
\label{div-scalar-Sigma}
\begin{aligned}
 \int_{\Sigma} (\dv_{\Sigma} u)\phi \,d\mu_{g} = -\int_{\Sigma} \la  u,\gd_\Sigma\phi\ra_g \, d\mu_{g}
 = - \int_{\Sigma} \nabla_u \phi\,d\mu_g .
\end{aligned}
\end{equation}
We are now ready to state and prove the following result which is used repeatedly in Section~\ref{S: global existence}.
\begin{proposition}
\label{pro: divergence-free}
Let $q\in (1,\infty)$,   $z\in H^1_2(\sN; T\Sigma)\cap L_{2q-2}(\sN; T\Sigma)$ and suppose that  $(v,w)$ satisfy \eqref{PE}.
 Then
\begin{equation*}
\int_\sN \la  \nabla_v z + w \partial_r z  , |z|^{q-2} z \ra_g \, d\mu_{g_\sN} = 0
\end{equation*}
and
\begin{equation*}
\int_\sN \la  \nabla_{\overline{v}} z  , |z|^{q-2} z \ra_g \, d\mu_{g_\sN} = 0.
\end{equation*}
\end{proposition}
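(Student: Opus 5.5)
The strategy is to rewrite the integrand as a divergence, using the pointwise identity
$$\la \nabla_X z, |z|^{q-2} z\ra_g = \tfrac1q\, \nabla_X\big(|z|_g^q\big),$$
valid for any vector field $X$ on $\sN$. This follows from metric compatibility of the Levi-Civita connection. The computation is $\nabla_X |z|^2_g = 2\la\nabla_X z, z\ra_g$, and then the chain rule gives $\nabla_X |z|_g^q = \tfrac q2 |z|_g^{q-2}\nabla_X|z|_g^2$. Similarly, $\la \partial_r z, |z|^{q-2}z\ra_g = \tfrac1q \partial_r |z|_g^q$, since $g$ does not depend on $r$.

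For the first identity, set $\phi:=|z|_g^q$. The integrand then becomes $\tfrac1q\big(\nabla_v\phi + w\partial_r\phi\big) = \tfrac1q \nabla^\sN_{u}\phi$ with $u=v+w\partial_r$, using \eqref{covariant-N}. Apply the divergence theorem on $\sN$ stated above. The boundary term is $\int_\Sigma[\la u,\nu\ra \phi]_{r=-h}^{r=0}d\mu_g = \int_\Sigma [w\phi]^{0}_{-h}$, and it vanishes because $w=0$ on $\Sigma_u\cup\Sigma_b$. The bulk term is $-\tfrac1q\int_\sN (\dv_\sN u)\phi$. By the product structure and \eqref{Chris-N}, $\dv_\sN u = \wdv v + \partial_r w$. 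From \eqref{PE}$_3$, $\partial_r w = -\wdv v$, so $\dv_\sN u = 0$ and the integral vanishes.

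For the second identity, the integrand is $\tfrac1q\nabla_{\bar v}\phi$. Integrate first over $\Sigma$ for fixed $r$ using \eqref{div-scalar-Sigma}, which gives $-\tfrac1q\int_\Sigma (\wdv \bar v)\phi(\cdot,r)\,d\mu_g$. This is zero by \eqref{PE}$_2$. Then integrate in $r$ via Fubini.

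The main obstacle is regularity. We need $\phi=|z|^q_g\in W^1_1(\sN)$, with $\nabla\phi = q|z|^{q-2}\la\nabla z,z\ra$ integrable against $u$, and we need $\dv_\sN u$ to make sense, so that the divergence theorem and chain rule are justified. I would first prove both identities for smooth $z$ and smooth $(v,w)$; the latter is available for $t>0$ by Proposition~\ref{prop:local existence}(d). I would then pass to general $z$ by density. Hölder's inequality shows that $|\nabla z|\,|z|^{q-1}|v|$ is integrable when $z\in H^1_2\cap L_{2q-2}$ and $v\in L_\infty$: indeed $|z|^{q-1}\in L_2$, and $w\in L_\infty$ for smooth $v$. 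Because these bounds are continuous in $z$ with respect to the $H^1_2\cap L_{2q-2}$ norm, the identities pass to the limit.
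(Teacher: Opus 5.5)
Your proposal is correct and follows essentially the paper's argument: both rest on the pointwise identity $q\la\nabla_X z,|z|^{q-2}z\ra_g=\nabla_X|z|_g^q$. The paper then integrates by parts separately over $\Sigma$ (slice-wise divergence theorem for the $v$-term) and in $r$ (using $w=0$ on $\Sigma_u\cup\Sigma_b$ and $\partial_r w=-\wdv v$), which is your single divergence theorem on $\sN$ for $u=v+w\partial_r$ written out in pieces, and it treats the $\bar v$ identity exactly as you do. Your density argument for general $z\in H^1_2\cap L_{2q-2}$ with smooth $(v,w)$ is not in the paper, but it is sound.
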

\begin{proof}
We remind here that  $|z|=|z|_g= \sqrt{ \la z, z\ra_g} $.
By the metric compatibility of the Levi-Civita connection,
\begin{equation*}
\begin{aligned}
\la \nabla_v z, |z|^{q-2}z\ra_g
= \nabla_v \la z, |z|^{q-2} z\ra_g - \la z, \nabla_v (|z|^{q-2} z)\ra_g
=\nabla_v |z|^q - \la z, \nabla_v (|z|^{q-2} z)\ra_g . \\
\end{aligned}
\end{equation*}
A short computation, using properties of the covariant derivative and, once more, the metric compatibility of the Levi-Civita connection, yields
$$
\la z, \nabla_v (|z|^{q-2} z)\ra_g = (q-1) \la \nabla_v z, |z|^{q-2} z\ra_g.
$$
Combining the above identities, we obtain
$$
q\, \la \nabla_v z, |z|^{q-2}z\ra_g =\nabla_v |z|^q  = \la \gd_\Sigma |z|^q, v \ra_g.
$$
Employing the divergence theorem~\eqref{div-scalar-Sigma} for $\Sigma$ yields
\begin{equation}
\label{AB}
\begin{aligned}
\int_\sN \la\nabla_v  z  , z |z |^{q-2}  \ra_g \, d\mu_{g_\sN} & = \frac{1}{q}  \int_{-h}^0 \int_{\Sigma} \la \wgd |z|^q , v \ra_g \, d\mu_g \, dr  \\
&= - \frac{1}{q}  \int_{-h}^0 \int_{\Sigma}   |z|^q\,  \wdv v   \, d\mu_g \, dr.
\end{aligned}
\end{equation}
By similar arguments as above and employing the boundary conditions $w(\cdot, -h)=w(\cdot, 0)=0$ we obtain
\begin{align*}
\int_\sN  \la  w \partial_r z  , |z|^{q-2} z \ra_g \, d\mu_{g_\sN} &= \frac{1}{q}  \int_{-h}^0 \int_{\Sigma} w \partial_r |z|^q   \, d\mu_g \, dr  \\
&=  - \frac{1}{q}  \int_{-h}^0 \int_\Sigma  \partial_r w |z|^q   \, d\mu_g \, dr  = \frac{1}{q}  \int_{-h}^0 \int_{\Sigma}   |z|^q  \wdv v   \, d\mu_g \, dr .
\end{align*}
For the last assertion, we used the identity
$$
\partial_r w+\operatorname{div}_\Sigma v=0,
$$
which follows from \(\eqref{PE}_3\). This completes the proof of the first part of the lemma.

\smallskip
For the second part, we apply formula \eqref{AB} with \(v\) replaced by \(\bar v\). Since
$\dv_\Sigma \bar v=0, $
by $\eqref{PE}_2$, the desired conclusion follows at once.
\end{proof}
The integration by parts formula (Green's identity) for the connection Laplacian is
\begin{equation}
\label{Green-1}
\int_\sN \langle \Delta_{\sN} u, v \rangle_{g_\sN} \, d\mu_{g_\sN}
=-\int_\sN \langle \nabla^\sN u, \nabla^\sN v \rangle_{g_\sN} \, d\mu_{g_\sN} + \int_{\partial N} \langle \nabla_\nu u, v \rangle_{g_\sN} \, dS,
\end{equation}
where $\nu$ is the outward unit normal, see for instance  \cite[Lemma~B.1]{SSW25}  for a proof.
Therefore,
\begin{equation}
\label{Green-2}
\int_\sN \langle \Delta_{\sN} u, v \rangle_{g_\sN} \, d\mu_{g_\sN}
= -  \int_\sN \langle \nabla^\sN u, \nabla^\sN v \rangle_{g_\sN} \, d\mu_{g_\sN}
 + \int_\Sigma \Big[ \la \partial_r u, v \rangle \Big]^{r=0}_{r=-h} \, d\mu_g .
\end{equation}

\medskip\noindent
For the closed manifold $\Sigma$, we have for $u(p,r), v(p,r) \in T_{p}\Sigma $
\begin{equation}
\label{Green-5}
\int_\Sigma \langle \Delta_\Sigma u, v \rangle_{g} \, d\mu_{g}
= \int_\Sigma  \langle \nabla u, \nabla  v \rangle_g\, d\mu_g .
\end{equation}

\begin{lemma}[Poincar\'e inequality]
\label{lem: Poincare}
Let \(1\le q<\infty\) and suppose that
$ u\in W^{1,q}(\sN; T\sN)$
satisfies  $u=0$ on $\Sigma_b$ or $\Sigma_u$.
Then
$$
\|u\|_{L^q(\sN)} \le h\,\|\partial_r u\|_{L^q(\sN)},\quad\text{or equivalently}\quad \|u\|_{L^q(\sN)} \le h\,\|\nabla^\sN_{\partial_r}u\|_{L^q(\sN)}.
$$
Hence,
$$
\|u\|_{L^q(\sN)} \le h\| \nabla^\sN u\| _{L^q(\sN)}.
$$
\end{lemma}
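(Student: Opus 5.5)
The plan is the standard one-dimensional argument in the vertical variable, carried out fibrewise over $\Sigma$ and then integrated with Fubini's theorem on the product manifold. First I reduce to smooth sections. By density of $C^\infty(\sN;T\sN)$ in $W^{1,q}(\sN;T\sN)$, the estimate extends from smooth $u$ that vanish on $\Sigma_b$ (respectively $\Sigma_u$). To make this work, the approximants must keep the boundary condition: I approximate in the closed subspace of fields with vanishing trace on the relevant boundary component, using continuity of the trace $W^{1,q}(\sN)\to L_q(\partial\sN)$. Also, since $\nabla^\sN_{\partial_r}u=\partial_r v+(\partial_r w)\partial_r$ by \eqref{covariant-N}, the two stated forms of the inequality are literally the same statement. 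Here $\partial_r$ is understood componentwise in the splitting $u=v+w\partial_r$.

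The key observation is the following. For fixed $p\in\Sigma$, the metric on the fibre $T_p\Sigma\oplus\bR$ does not depend on $r$, because $g_\sN=g\oplus g_\bR$. Hence $r\mapsto u(p,r)$ is a curve in a fixed finite-dimensional inner product space, and $\partial_r$ is an ordinary derivative in that space. Assume $u(p,-h)=0$. Then
$$u(p,r)=\int_{-h}^r \partial_r u(p,\xi)\,d\xi ,$$
so $|u(p,r)|_{g_\sN}\le \int_{-h}^0|\partial_r u(p,\xi)|_{g_\sN}\,d\xi$. Hölder's inequality then gives
$$|u(p,r)|^q_{g_\sN}\le h^{q-1}\int_{-h}^0|\partial_r u(p,\xi)|^q_{g_\sN}\,d\xi .$$
Integrating in $r\in(-h,0)$ and then over $\Sigma$ with respect to $d\mu_g$ (Fubini on $\sN$), I obtain $\|u\|^q_{L_q(\sN)}\le h^q\|\partial_r u\|^q_{L_q(\sN)}$, which is the first inequality. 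The case $u=0$ on $\Sigma_u$ is identical, writing $u(p,r)=-\int_r^0\partial_r u\,d\xi$. The case $q=1$ needs no Hölder step.

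For the last claim, I use the fact that $\nabla^\sN_{\partial_r}u=(\nabla^\sN u)\partial_r$ and $|\partial_r|_{g_\sN}=1$. The pointwise bound \eqref{pointwise-estimate} then gives $|\nabla^\sN_{\partial_r}u|_{g_\sN}\le|\nabla^\sN u|_{g_\sN}$, and integrating yields $\|u\|_{L^q}\le h\|\nabla^\sN u\|_{L^q}$.

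The only step needing care is the density/trace reduction together with the claim that the fibre inner product is $r$-independent. I do not expect real difficulty in either: the second is immediate from the product structure \eqref{Chris-N}, since the Christoffel symbols involving $r$ vanish, and the first is routine.
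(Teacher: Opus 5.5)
Your proof is correct and follows the paper's own argument: the fundamental theorem of calculus along each fibre $r\mapsto u(p,r)$, starting from the boundary component where $u$ vanishes, then H\"older's inequality, then integration in $r$ and over $\Sigma$ via Fubini. You also spell out two points the paper leaves implicit: the density/trace reduction to smooth fields, and the pointwise bound $|\nabla^\sN_{\partial_r}u|_{g_\sN}\le|\nabla^\sN u|_{g_\sN}$ (from $|\partial_r|_{g_\sN}=1$) that yields the final inequality.
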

\begin{proof}
 We  will only consider the case  $u=0$ on $\Sigma_b$. The proof of the case  $u=0$ on $\Sigma_u$  is entirely analogous.
As $u(\cdot,-h)=0$, the fundamental theorem of calculus yields
$$
u(p,r)= \int_{-h}^{r}\partial_su(p,s)\,ds
$$
for every \((p,r)\in \sN\). Consequently,
$$
|u(p,r)|_{g_\sN} \le \int_{-h}^{0} |\partial_su(p,s)|_{g_\sN}\,ds.
$$
By H\"older's inequality,
$$
|u(p,r)|_{g_\sN}^q \le \left( \int_{-h}^{0} |\partial_su(p,s)|_{g_\sN}\,ds \right)^q \le h^{q-1} \int_{-h}^{0} |\partial_su(p,s)|_{g_\sN}^q\,ds.
$$
Integrating with respect to \(r\in[-h,0]\), we obtain
$$
\int_{-h}^{0} |u(p,r)|_{g_\sN}^q\,dr \le h^q \int_{-h}^{0} |\partial_ru(p,r)|_{g_\sN}^q\,dr.
$$
Integrating over \(\Sigma\) yields
$$
\int_\Sigma\int_{-h}^{0} |u(p,r)|_{g_\sN}^q\,dr\,d\mu_g \le h^q \int_\Sigma\int_{-h}^{0} |\partial_ru(p,r)|_{g_\sN}^q\,dr\,d\mu_g.
$$
Hence
$$
\|u\|_{L^q(\sN)} \le h\,\|\partial_ru\|_{L^q(\sN)}.
$$
Finally, since $\sN$ is a product Riemannian manifold, we have $\nabla^\sN_{\partial_r}u = \partial_r u$.
\end{proof}
As an immediate consequence of Poincar\'e's inequality we can show invertibility of the connection Laplacian $\Delta_\sN$.
\begin{lemma}
\label{lem: N-invertible}
Let $D(\Delta_\sN )=\{v  \in H^2_{q} (\sN;T\Sigma) : \, v=0 \text{ on }\Sigma_b, \ \ \partial_r v=0 \text{ on }\Sigma_u\}.$ \\
Then
$$\Delta_\sN \in \Lis (D( \Delta_\sN), L_q(\sN; T\Sigma)).$$
In particular, there exists a number $C>0$ such that
$$
\|  v \|_{H^2_q(\sN)} \le C \| \Delta_\sN v \|_{L_q(\sN)},\quad v\in D(\Delta_\sN).
$$

\end{lemma}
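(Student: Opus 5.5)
We need to prove that $\Delta_\sN$ is an isomorphism from $D(\Delta_\sN)$ onto $L_q(\sN;T\Sigma)$, together with the stated estimate.

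The plan is to combine the shifted resolvent theory already available with injectivity coming from the Poincar\'e inequality (Lemma~\ref{lem: Poincare}), and then to use a Fredholm argument. By Lemma~\ref{lem:highOrd} with $s=0$, or directly from Proposition~\ref{prop: H calculus of A} applied to the operator without the lower-order terms $\Ric$ and $B_H$, there is $\lambda>0$ such that
$$\lambda-\Delta_\sN\in \Lis(D(\Delta_\sN),L_q(\sN;T\Sigma)).$$
We then write
$$-\Delta_\sN=(\lambda-\Delta_\sN)\bigl(I-\lambda(\lambda-\Delta_\sN)^{-1}\bigr)$$
on $D(\Delta_\sN)$. Since $\sN$ is compact, the embedding $D(\Delta_\sN)\hookrightarrow H^2_q\hookrightarrow L_q$ is compact, so $K:=\lambda(\lambda-\Delta_\sN)^{-1}$ is a compact operator on $L_q$. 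By the Fredholm alternative, $I-K$ is an isomorphism on $L_q$ as soon as it is injective. Hence $\Delta_\sN$ is an isomorphism once we know that ${\rm Ker}(\Delta_\sN)=\{0\}$ in $D(\Delta_\sN)$. The estimate $\|v\|_{H^2_q}\le C\|\Delta_\sN v\|_{L_q}$ then follows from the bounded inverse theorem.

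Injectivity is the main step. Suppose $v\in D(\Delta_\sN)$ satisfies $\Delta_\sN v=0$. If $q\ge 2$, then $v\in H^2_2$ because $\sN$ is compact, and we may test directly. If $q<2$, we first bootstrap regularity: $v$ solves $(\lambda-\Delta_\sN)v=\lambda v$, so Lemma~\ref{lem:highOrd} together with Sobolev embeddings and the uniqueness of resolvents across different $L_q$ spaces (the resolvents are consistent since they are defined by the same boundary value problem) gives $v\in H^2_2$. Next we test with $v$ using Green's identity \eqref{Green-2}. The boundary term vanishes, because $v=0$ on $\Sigma_b$ and $\partial_r v=0$ on $\Sigma_u$. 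This yields $\|\nabla^\sN v\|_{L_2(\sN)}^2=0$. Lemma~\ref{lem: Poincare} with $q=2$, applicable since $v=0$ on $\Sigma_b$, then gives $\|v\|_{L_2}\le h\|\nabla^\sN v\|_{L_2}=0$, so $v=0$.

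The only delicate point is the regularity bootstrap from $L_q$ to $L_2$ when $q<2$. We would handle it by iterating Lemma~\ref{lem:highOrd}: each step gains two derivatives, and Sobolev embedding converts these into higher integrability. Finitely many steps reach $H^2_2$.
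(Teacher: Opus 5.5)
Your proof is correct and takes the same route as the paper. You reduce, via compactness of the resolvent (made explicit through your Fredholm factorization), to showing that $0$ is not an eigenvalue, and you rule this out with Green's identity \eqref{Green-2} and the Poincar\'e inequality of Lemma~\ref{lem: Poincare}; your regularity bootstrap to $H^2_2$ for $q<2$, with $\lambda$ taken large enough for the finitely many exponents involved, spells out a step that the paper's $L_2$-testing argument leaves implicit.
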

\begin{proof}
Since $\Delta_{\sN}$ has compact resolvent, it suffices to show that $0$ is not an eigenvalue.
Suppose, to the contrary, that $v\in D(\Delta_\sN)\setminus\{0\}$ and $\Delta_\sN v=0$.
It then follows from \eqref{Green-2} that
$$
0 = \int_\sN \langle \Delta_{\sN} v, v \rangle_{g_\sN} \, d\mu_{g_\sN}
= -  \int_\sN | \nabla^\sN v |^2_{g_\sN} d\mu_{g_\sN}.
$$
Hence $\nabla^\sN v=0$, and by Lemma~\ref{lem: Poincare} we conclude that $v=0$, leading to a contradiction.
\end{proof}
\begin{lemma}
\label{lem:interpolation}
Let $p,q,r\in (1,\infty)$ satisfying $\frac{1}{p}\left( \frac{1}{2}+ \frac{1}{q} \right) \geq \frac{1}{r}$.
Then there exists some constant $C>0$ such that
\begin{align*}
\| |z|^p \|_{L_q(\Sigma)}\leq C \left( \|  z  \|_{L_r(\Sigma)}^p +  \|  z  \|_{L_r(\Sigma)}^{p-r/2}  \| \nabla | z|^{r/2}  \|_{L_2(\Sigma)} \right),
\end{align*}
provided the terms on the right hand side are finite.
\end{lemma}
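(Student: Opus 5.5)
The plan is to reduce the estimate to a scalar Gagliardo--Nirenberg inequality on the two-dimensional closed manifold $\Sigma$, applied to the nonnegative function $f:=|z|^{r/2}$. We have $|z|^p = f^{2p/r}$, and therefore
$$
\||z|^p\|_{L_q(\Sigma)} = \|f\|_{L_{2pq/r}(\Sigma)}^{2p/r}.
$$
Set $s:=2pq/r$. The hypothesis $\frac1p\left(\frac12+\frac1q\right)\ge \frac1r$ is equivalent to $\frac{2p}{r}\le 1+\frac{2}{q}$, that is, to
$$
\frac{2}{s}\ \ge\ \frac{r}{pq}\cdot\frac{2p}{r}\cdot\frac{1}{1+2/q}\cdot\frac{1}{q}\cdots;
$$
rather than chase this form, I will verify directly that the exponent bookkeeping closes, as follows.

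On a closed surface, the Gagliardo--Nirenberg (Ladyzhenskaya-type) inequality reads
$$
\|f\|_{L_s(\Sigma)} \le C\bigl(\|f\|_{L_2(\Sigma)} + \|f\|_{L_2(\Sigma)}^{2/s}\,\|\nabla f\|_{L_2(\Sigma)}^{1-2/s}\bigr), \qquad 2\le s<\infty.
$$
This is exactly the inequality invoked in Step~3 of the a priori estimates, and I would prove it by localization together with the Euclidean version in $\bR^2$, using Kato's inequality $|\nabla |z|^{r/2}|\le$ (a scalar gradient) so that only scalar functions are needed. Since $\|f\|_{L_2}=\|z\|_{L_r}^{r/2}$, raising the inequality to the power $2p/r$ gives
$$
\||z|^p\|_{L_q} \le C\Bigl(\|z\|_{L_r}^{p} + \|z\|_{L_r}^{2p/s}\,\|\nabla f\|_{L_2}^{\frac{2p}{r}\left(1-\frac2s\right)}\Bigr),
$$
and $2p/s = r/q$.

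Write $\alpha := \frac{2p}{r}\left(1-\frac{2}{s}\right) = \frac{2p}{r}-\frac{2}{q}$. The hypothesis gives exactly $\alpha\le 1$. If $s<2$, that is $2pq<2r$, then $\|f\|_{L_s}\le C\|f\|_{L_2}$ directly by H\"older on the compact $\Sigma$, and the first term suffices.

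To reach the stated form with the gradient to the first power, I apply Young's inequality to $X^{\alpha}$ with $X=\|\nabla f\|_{L_2}$, weighted by powers of $A:=\|z\|_{L_r}^{r/2}$:
$$
A^{\frac{2p}{r}-\alpha}X^{\alpha} \le A^{\frac{2p}{r}} + A^{\frac{2p}{r}-1}X .
$$
This holds because $t^\alpha\le 1+t$ for $t=X/A\ge0$ and $\alpha\in[0,1]$. Since $A^{2p/r}=\|z\|_{L_r}^p$ and $A^{2p/r-1}=\|z\|_{L_r}^{p-r/2}$, the right side is exactly the claimed bound. The degenerate case $A=0$ is trivial.

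The main obstacle is only the bookkeeping together with the validity of the surface Gagliardo--Nirenberg inequality for nonsmooth $f=|z|^{r/2}$. I would handle the latter by density: approximate by $(|z|^2+\varepsilon)^{r/4}$ and pass to the limit using the finiteness assumption on the right-hand side.
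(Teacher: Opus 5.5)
Your proof is correct and follows essentially the same route as the paper. The paper simply defers to \cite[Lemma~6.3(b)]{HiKa16} together with Gagliardo--Nirenberg inequalities on manifolds, and your argument is that reduction: write $\||z|^p\|_{L_q(\Sigma)}=\||z|^{r/2}\|_{L_{2pq/r}(\Sigma)}^{2p/r}$, apply the scalar Gagliardo--Nirenberg inequality on $\Sigma$, then use Young's inequality, with the hypothesis entering exactly as $\alpha=\frac{2p}{r}-\frac{2}{q}\le 1$.

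You can drop the abandoned display and the Kato and $(|z|^2+\varepsilon)^{r/4}$ remarks; the regularization is not even obviously controlled when $r>4$. Finiteness of the right-hand side already gives $|z|^{r/2}\in H^1_2(\Sigma)$, and the scalar inequality holds on all of $H^1_2(\Sigma)$ by density of smooth functions.
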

\begin{proof}
The proof follows exactly the same argument in \cite[Lemma~6.3(b)]{HiKa16}.
The corresponding embedding and interpolation inequalities on manifolds has been obtained in  \cite{Ama13} and \cite{AubinBook}.
\end{proof}

The following Gagliardo-Nirenberge inequalities have been utilized in Section~\ref{S: global existence}.
They can be  proved by using the Poincar\'e inequality, see Lemma~\ref{lem: Poincare}, the Sobolev embedding theorem and interpolation theory, see for instance \cite[Theorems 13.1 and 14.2]{Ama13}.
\begin{equation}
\label{G-N ineq}
\begin{split}
\| u \|_{L_3(\sN)} & \leq C \| u \|_{L_2(\sN)}^{1/2} \| \nabla^\sN u\|_{L_2(\sN)}^{1/2}   \\
\| u \|_{L_4(\sN)} & \leq C \| u \|_{L_2(\sN)}^{1/4} \| \nabla^\sN u\|_{L_2(\sN)}^{3/4}   \\
\| u \|_{L_4(\sN)} & \leq C \| u \|_{L_3(\sN)}^{1/2} \| \nabla^\sN u\|_{L_2(\sN)}^{1/2}   \\
\| u \|_{L_4(\Sigma)} & \leq C \| u \|_{L_2(\Sigma)}^{1/2} \| \nabla u\|_{L_2(\Sigma)}^{1/2} + \| u \|_{L_2(\Sigma)} \\
\| u \|_{L_3(\Sigma)} & \leq C \| u \|_{L_2(\Sigma)}^{2/3} \| \nabla u\|_{L_2(\Sigma)}^{1/3} + \| u \|_{L_2(\Sigma)} \\
\| u \|_{L_6(\Sigma)} & \leq C \| u \|_{L_2(\Sigma)}^{1/3} \| \nabla u\|_{L_2(\Sigma)}^{2/3}  + \| u \|_{L_2(\Sigma)}.
\end{split}
\end{equation}
The first three inequalities hold for all $u\in H^1_2(\sN; T\sN)$ such that $u=0$ on $\Sigma_b$ or $\Sigma_u$.
\begin{proposition}\label{Prop:trace}
Suppose $u\in H^1_2(\sN; T\sN)$ and $u=0$ on $\Sigma_u$.
There exists a constant $C$ such that
$$
\| u \|_{L_2(\Sigma_b)} \leq C \|u\|_{L_2(\sN)}^{1/2}\| \nabla^\sN u\|_{L_2(\sN)}^{1/2}.
$$
\end{proposition}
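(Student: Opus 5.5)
The plan is to reduce the estimate to a one-dimensional trace inequality in the vertical variable, applied fibrewise over $\Sigma$, and then integrate over $\Sigma$. By density of smooth sections vanishing on $\Sigma_u$ in $\{u\in H^1_2(\sN;T\sN): u=0 \text{ on } \Sigma_u\}$, and continuity of both sides in the $H^1_2$-norm (the trace being continuous $H^1_2(\sN)\to L_2(\Sigma_b)$), it suffices to treat smooth $u$.

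Fix $p\in\Sigma$ and consider $r\mapsto |u(p,r)|^2_{g_\sN}$. Because $u(p,0)=0$, the fundamental theorem of calculus gives
\begin{equation*}
|u(p,-h)|^2_{g_\sN} = -\int_{-h}^0 \partial_r |u(p,r)|^2_{g_\sN}\,dr = -2\int_{-h}^0 \la \partial_r u, u\ra_{g_\sN}\,dr,
\end{equation*}
where the second identity uses metric compatibility. Since the metric $g_\sN$ is independent of $r$ and $\nabla^\sN_{\partial_r}u=\partial_r u$ (the Christoffel symbols involving $r$ vanish, cf. \eqref{Chris-N}), we have $|\partial_r u|_{g_\sN}\le |\nabla^\sN u|_{g_\sN}$ pointwise. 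Hence
\begin{equation*}
|u(p,-h)|^2_{g_\sN}\le 2\int_{-h}^0 |u|_{g_\sN}|\nabla^\sN u|_{g_\sN}\,dr .
\end{equation*}

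Integrating over $\Sigma$ with respect to $d\mu_g$, using Fubini on the product manifold and the Cauchy--Schwarz inequality on $\sN$, yields
\begin{equation*}
\|u\|^2_{L_2(\Sigma_b)}\le 2\|u\|_{L_2(\sN)}\|\nabla^\sN u\|_{L_2(\sN)} .
\end{equation*}
Taking square roots gives the claim with $C=\sqrt2$.

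The only delicate point is the density and trace step. It is standard: a localization as in the proof of Lemma~\ref{lem:highOrd} reduces it to the half-space, where it holds. Alternatively, one can argue directly, since $u\in L_2(\Sigma;H^1_2(-h,0))$ is absolutely continuous in $r$ for a.e.\ $p$, and the fibrewise computation above applies verbatim.
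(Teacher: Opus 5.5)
Your proof is correct. It rests on the same mechanism as the paper's, integration in the normal direction, but takes a cleaner route.

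The paper extends $-\nu$ from $\Sigma_b$ to a $BC^1$ field $V$ on $\sN$ that vanishes on $\Sigma_u$, and applies the divergence theorem to $|u|^2 V$. This yields $\|u\|^2_{L_2(\Sigma_b)} \le C\|u\|_{L_2(\sN)}\|\nabla^\sN u\|_{L_2(\sN)} + C\|u\|^2_{L_2(\sN)}$, where the zero-order term comes from $|u|^2\,\dv_\sN V$. The hypothesis $u=0$ on $\Sigma_u$ then enters only through the Poincar\'e inequality (Lemma~\ref{lem: Poincare}), which absorbs that term.

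You instead use the globally defined unit field $\partial_r$. It is parallel by \eqref{Chris-N}, hence divergence-free. You let $u=0$ on $\Sigma_u$ kill the upper boundary contribution directly. As a result no zero-order term appears, Poincar\'e is not needed, and you obtain the explicit constant $C=\sqrt2$, which is independent of $h$.

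The trade-off is this. The paper's device is more portable: before the Poincar\'e step it proves the general multiplicative trace inequality on any compact manifold with boundary, with no vanishing condition. Yours exploits the product geometry of $\sN$ to get a shorter and sharper estimate. You also make the density and trace justification explicit, which the paper leaves implicit.
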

\begin{proof}
Let $\nu$ be  the unit outward pointing normal of $\Sigma_b$ in the ambient  manifold $\sN$.
Note that  -$\nu$ can be extended to a vector field $V\in BC^1(\sN; T\sN)$ such that $V=0$ on $\Sigma_u$.
 By the divergence theorem  \eqref{Green-1},
\begin{align*}
\int_{\sN} \dv_{\sN} \left( |u|^2 V  \right) \, d\mu_{g_\sN} & =  -\int_\Sigma \left(|u|^2 \la V, \nu\ra_{g_\sN}\right)\Big|_{r=-h} 
 = \int_{\Sigma_b} |u|^2 \, d\mu_g  = \|u\|_{L_2(\Sigma_b )}^2,
\end{align*}
where $|u|= |u|_{g_\sN}$.
On the other hand,
\begin{align*}
\int_{\sN} \dv_{\sN} \left( |u|^2 V  \right) \, d\mu_{g_\sN} &
= 2 \int_{\sN} \la  \nabla_V^{\sN} u , u \ra_{g_\sN} \, d\mu_{g_\sN} +  \int_{\sN}   |u|^2 \,\dv_{\sN} V   \, d\mu_{g_\sN} \\
&\leq C \| u \|_{L_2(\sN)} \|  \nabla^{\sN}  u \|_{L_2(\sN)}  + C \| u \|_{L_2(\sN)}^2 \\
& = C \| u \|_{L_2(\sN)} \| \nabla^{\sN}  u \|_{L_2(\sN)}  + C \| u \|_{L_2(\sN)} \| u \|_{L_2(\sN)} \\
&\leq C \| u \|_{L_2(\sN)} \| \nabla^{\sN}  u \|_{L_2(\sN)}
\end{align*}
in virtue of  Lemma~\ref{lem: Poincare}. This completes the proof.
\end{proof}

\section{Derivation of the primitive equations}\label{S: Appendix B}
In this section, we derive the primitive equations~\eqref{PE}.

\medskip\noindent
Suppose  $\Sigma$ is a smooth and closed hypersurface  in  $\bR^3$,
and let $S$ be a collar neighborhood of $\Sigma$ given by
$$
S=
\left\{x \in \bR^3 :x = \p + r \nu_\Sigma(p),
\quad\p \in \Sigma,\quad -h < r < 0\right\},
$$
where $h>0$ is small and $\nu_\Sigma$ denotes the outward pointing unit normal field  on $\Sigma$.
We assume that $S$ is occupied by an incompressible viscous fluid, whose motion is modeled
by the  incompressible (three dimensional Euclidean) Navier-Stokes equations,
\begin{equation}
\label{N-S eq}
\left\{\begin{aligned}
\varrho \left( \partial_t u + u\cdot\nabla u \right) - \mu_s \Delta u + \nabla  \pi  & =f &&\text{in}&&S ,\\
\dv u &=0 &&\text{in}  &&S , \\
\end{aligned}\right.
\end{equation}
where $\varrho$ and $\mu_s$ are the fluid density and viscosity, respectively.
Hence, we think of an ocean of water of constant depth $h$ that occupies the region $S$ 
 and is subject to an external force $f$. 

After introducing canonical normal coordinates for  $S$, we rewrite the Navier--Stokes equations with respect to the
  \emph{collar metric}. In the course of this reformulation, we perform approximations that exploit the smallness of the parameter $h$.

\medskip
In the following, we consider $(\Sigma,g)$ as a Riemannian manifold, where $g=g_\Sigma$ is the Riemannian metric induced on $\Sigma$ by the ambient Euclidean metric
$g_{\bR^3}$ of $\bR^3$.
Let
\begin{equation*}
\Phi: \Sigma\times (-h,0) \to \bR^3,\quad \Phi(p,r) := p+ r\nu_\Sigma(p). \\
\end{equation*}
If $h$ is sufficiently small, it is well-known, see for instance  \cite[Section 2.3]{PrSi16}, that there exist  two smooth mappings
\begin{equation}
\label{P-d}
\Pi_\Sigma: S \to \Sigma\quad \text{and} \quad \sd_\Sigma: S \to (-h,0)
\end{equation}
such that for $x\in S$, $\Pi_\Sigma(x)  $ is the nearest point to $x$ on $\Sigma$
and $\sd_\Sigma(x) $ is the signed distance of $x$ to $\Sigma$.
With this, we have
$$
S= {\rm Im}(\Phi)= \{x \in \bR^3 : -h < \sd_\Sigma (x) < 0 \}.
$$
Hence,
\begin{equation*}
\Phi:  \sM \to S,\qquad \sM = \Sigma \times (-h,0),
\end{equation*}
is a smooth diffeomorphism.

\medskip
Let $\{U_\kappa : 1\le \kappa\le N\}$ be an open cover for $\Sigma$ and let
$$
\varphi_\kappa : B_r(0)\subset \bR^2 \to U_\kappa,\quad \kappa =1,\ldots, N,
$$
be a local parameterization of $U_\kappa$.
For the rest of this article, we will omit the subscript $\kappa$ when the choice of the local patch is immaterial.
In local coordinates, the  metric $g$ of $\Sigma$ is given by
$$
[g]_{ij}= [\la\partial_i \varphi, \partial_j \varphi\ra _{g_{\bR^3}}],\quad 1\le i,j\le 2,
$$
where  $\la \cdot\, , \cdot\ra_{g_{\bR^3}}$ denotes the Euclidean inner product.
A basis for the tangent space $T\Sigma $ of $\Sigma$ can be defined by
$$
\tau_i =\frac{\partial}{\partial x_i}  = \partial_i= \partial_i \varphi, \quad i=1,2.
$$
For the dual basis  we use the notation $\{\tau^i\}_{i=1}^2$.
Hence, $\tau^i= dx^i$, where $dx^i$ is the notation commonly used in differential geometry.
We recall that
\begin{equation}
\label{diff-nu}
\partial_i \nu_\Sigma = -L_\Sigma \tau_i = - l^k_i \tau_k,
\end{equation}
where $L_\Sigma=l^i_j \tau_i \otimes \tau^j$ is the  Weingarten tensor of $\Sigma$.
In the following, we use the convention
\begin{equation}
\label{lij}
l_{ij}= g_{ik} l^k_j,\quad  l^{i}_j  = g^{ik}l_{jk}.
\end{equation}
The (two-fold) mean curvature of $\Sigma$ is defined by
\begin{equation*}
H_\Sigma = {\rm tr} L_\Sigma .
\end{equation*}
The gradient,  the divergence, the Levi-Civita connection, and  the connection Laplacian  on $(\Sigma, g)$ are denoted by
$$
\gd_\Sigma, \   \dv_\Sigma, \ \nabla=\nabla^\Sigma,\   \Delta_\Sigma,
$$
 respectively.
Next we observe that
\begin{equation}
\label{phi-parameterization}
\phi_\kappa : B_r(0) \times (-h,0)\to \bR^3,\quad  \phi_\kappa(x_1,x_2,r)=\varphi_\kappa(x_1,x_2)+ r \nu_{\Sigma}(\varphi_\kappa(x_1,x_2)),
\ \kappa=1,\ldots, N,
\end{equation}
is a parameterization of the collar patch $\Phi( U_\kappa \times (-h,0))$,
 the parametrization by \emph{canonical normal coordinates}.
Setting
$$
\boldsymbol{\tau}_i=\boldsymbol{\tau}_i(\cdot, r):=\partial_i \phi,    \quad \boldsymbol{\tau}_3:= \nu_\Sigma,
$$
we obtain the \emph{collar metric}
$$
g_\sM= [\la \boldsymbol{\tau}_\alpha, \boldsymbol{\tau}_\beta\ra_{g_{\bR^3 }}],\quad 1\le \alpha, \beta \le 3.
$$
 As before, we omit the subscript $\kappa$ whenever the choice of a local patch is immaterial.
In the following, we use the Einstein summation convention, indicating that terms with repeated indices are added.
Lowercase greek indices vary from 1 to 3, latin indices vary from~1~to~2.

Setting $\sM = \Sigma\times (-h,0)$,
$$
 ( \sM, g_{\sM}) \quad\text{becomes a Riemannian manifold},
$$
and
$\{\boldsymbol{\tau}_\alpha\}_{\alpha=1}^3$
forms a basis  of  the tangent space of $\sM$  at a point
 $z=\phi_\kappa(x_1,x_2,r)\in\sM$, with corresponding dual basis $\{\boldsymbol{\tau} ^\alpha \}_{\alpha=1}^3$.
In the following, we suppress the coordinates $(x_1,x_2)$ in our notation, while still keeping~$r$.
With \eqref{diff-nu}, we obtain
$$
\boldsymbol{\tau}_i =  \partial_i \phi = \tau_i -r l^j_i \tau_j .
$$
Employing  \eqref{lij},
we then obtain for  the metric of $\sM$ the expression
 \begin{equation*}
 [g_\sM (r)]= \begin{bmatrix}
&\!\![g_{ij} + d_{ij}(r)] & 0 \\
&0  & 1  \\
\end{bmatrix} ,
\quad\text{with}\quad d_{ij}(r)= -2r l_{ij} +r^2 l_{ik} l^k_j , \quad i,j\in\{1,2\}.
\end{equation*}
It is important to notice that the Christoffel symbols
$\Gamma^\gamma_{\alpha\beta}(r)=(\Gamma_\sM)^\gamma_{\alpha\beta}(r)$ of $\sM$ satisfy
\begin{equation}
\label{Chris-1}
\begin{aligned}
\Gamma^3_{ij}(r) &= l_{ij} -r l_{ik} l^k_j,     \quad && \Gamma^3_{\alpha 3}(r)=0, \\
\Gamma^i_{3j}(r) &=g^{i\ell}(r) (-l_{j \ell }+ r l_{\ell k} l^k_j) , \quad &&\Gamma^i_{33}(r)=0 ,\\
\Gamma^m_{ij}(r) &= (\Gamma_\Sigma)^m_{ij} + \Lambda^m_{ij}(r), \quad &&\text{with } \Lambda^m_{ij}(0)=0
\end{aligned}
\end{equation}
for $i,j,m\in\{1,2\}$, where $(\Gamma_\Sigma)^m_{ij}$ are the Christoffel symbols of $\Sigma$.
Note that
\begin{equation*}
\Gamma^m_{ij} (0) = (\Gamma_\Sigma)^m_{ij},\quad i,j,m\in\{1,2\} .
\end{equation*}
Because the vertical scale of the ocean is much smaller than its horizontal scale, it is sometimes reasonable to set $r=0$
for certain geometric quantities
in the model derivation.
In the sequel, $A(r) \approx B$ means that we  set $r=0$ in some components of $A(r)$ to obtain an approximation $B$.
For instance, we have  $\Gamma^m_{ij} (r)\approx \Gamma^m_{ij} (0)=(\Gamma_\Sigma)^m_{ij}$
for $i,j,m\in\{1,2\}$.

\medskip\noindent
The gradient, the divergence,  the Levi-Civita connection, and the connection Laplace  on $(\sM, g_\sM)$ are denoted by
 $$
 \gd_\sM,\quad \dv_\sM \quad   \nabla^\sM,\quad    \text{and} \quad  \Delta_\sM,
 $$
 respectively. Now we observe that
 \begin{equation*}
 \Phi : (\sM, g_{\sM})  \to (S, g_{\bR^3}) \quad\text{is an isometry},
 \end{equation*}
 as  $g_\sM$ is the pull-back metric  $\Phi^*(g_{\bR^3})$  of the Euclidean metric $g_{\bR^3}$.
Since the Levi–Civita connection is preserved under isometries, we can conclude that
\begin{equation}
\label{quantiities-collar}
 \gd_\sM = \Phi^* \gd_{\bR^3},\quad \dv_\sM = \Phi^*\dv_{\bR^3},
 \quad \nabla^\sM =\Phi^* \nabla^{\bR^3},
 \quad \Delta_{\sM}= \Phi^* \Delta_{\bR^n},
\end{equation}
where $\Phi^*$ denotes the pullback  by $\Phi$. In~\eqref{quantiities-collar}, we use slightly imprecise language
as the mathematically correct statements would, for instance,  read $\gd_\sM = \Phi^* (\gd_{\bR^3}\circ \Phi_*)$.

\medskip\noindent
Employing~\eqref{quantiities-collar}, we  are now ready to reformulate the  Navier-Stokes equations \eqref{N-S eq} by means of the collar metric $g_{\sM}$.
We use the orthogonal decomposition
$$
u(p,r) = v(p,r) + w(p,r) \nu_\Sigma(p) \in T_{p}\Sigma \oplus T^\perp_p\Sigma, \quad (p,r)\in \Sigma\times (0,h),
$$
and for short
$$
u(p,r)=(v(p,r), w(p,r) )\in T_p\Sigma \times \bR .
$$
Hence, $v$ is  the tangential component and $w$ normal component of for the velocity field $u: S\to \bR^3$, respectively.
Here we use, in slight abuse of notation, the same symbol for $u$ and  $u\circ \Phi$.
We have in local coordinates
\begin{equation*}
u(\phi(x_1,x_2,r)),\quad (v(\phi(x_1,x_2,r), w(\phi(x_1,x_2,r))),\quad (x_1,x_2,r)\in B_r(0)\times (-h,0),
\end{equation*}
where $\phi$ is defined in~\eqref{phi-parameterization}.
Moreover,  we write in local coordinates

\begin{equation}
\label{u-basis}
u  = \sum_{\alpha =1}^3 u^\alpha \boldsymbol{\tau}_\alpha = \sum_{i=1}^2 v^i \boldsymbol{\tau}_i + w  \nu_\Sigma.
\end{equation}
In the collar metric, the transport term  $\Phi^* (u\cdot \nabla u) = \Phi^*(\nabla^{\bR^3}_u u)$ is given by
$$
 \nabla^{\sM}_u u = u^\alpha \nabla^\sM_\alpha u = u^j \nabla^\sM_j u+ u^3\nabla^\sM_3 u = v^j \nabla^{\sM}_j u + w\nabla^{\sM}_3 u,
$$
where $\nabla^\sM_\alpha = \nabla^\sM_{\boldsymbol{\tau}_\alpha}$.
Employing  \eqref{u-basis} and \eqref{Chris-1}, a straightforward calculation yields
\begin{equation*}
\begin{aligned}
(\nabla^\sM_u u)
&= v^j  \big({\partial_j} v^i + v^k\Gamma^i_{jk}(r) +w \Gamma^i_{j3}(r) \big)\boldsymbol{\tau}_i + w \big( \partial_r v^i  + v^k\Gamma^i_{k3} (r) \big)\boldsymbol{\tau}_i \\
&\qquad + \big( v^i v^j \Gamma^3_{ij}(r)  + v^j\partial_j w + w\partial_3 w\big)\boldsymbol{\tau}_3.
\end{aligned}
\end{equation*}
The tangential part is then given by
\begin{equation}
\label{transport-M}
\begin{aligned}
(\nabla^\sM_u u)_{\rm tan }
&= v^j  \big({\partial_j} v^i + v^k\Gamma^i_{jk}(r) +w \Gamma^i_{j3}(r) \big)\boldsymbol{\tau}_i + w \big( \partial_r v^i  + v^k\Gamma^i_{k3} (r) \big)\boldsymbol{\tau}_i \\
& \approx v^j \big({\partial_j} v^i +\Gamma^i_{jk}(0) v^k)\tau_i + w( \partial_r v^i  + 2 v^j  \Gamma^i_{j3}(0)  \big){\tau}_i \\
& = \nabla_v v  + w\partial_r v - 2w L_\Sigma v .
\end{aligned}
\end{equation}
We point out that the partial derivatives $\partial_j u$ are taken with respect to the parameterization $\phi$ in ~\eqref{phi-parameterization}.
In case $r=0$, $\partial_j u$ coincides with the partial derivatives taken with respect to the parameterization $\varphi$.
We also note that $\boldsymbol{\tau}_j = \tau_j$ in case $r=0$.

For the last assertion in~\eqref{transport-M}, we have used \eqref{Chris-1} to conclude that
$$
2w \Gamma^i_{j3}(0)v^j \tau_i  = -2w l^i_j v^j \tau_i = -2w L_\Sigma (v^j \tau_j)= -2w L_\Sigma v.
$$
Taking  once more \eqref{Chris-1} into account, the divergence $\dv_\sM  \Phi^*(\dv_{\bR^3})$ is given by
\begin{equation}
\label{div-M}
\begin{aligned}
 \dv_\sM  u = u^\alpha_{| \alpha}
&= {\partial}_j v^j + \Gamma^j_{jk}(r) v^k + \Gamma^j_{j3}(r) w + \partial_r w + \Gamma^3_{3j}(r) v^j     \\
     &={\partial}_j v^j + \Gamma^j_{jk}(r) v^k + \Gamma^j_{j3}(r) w + \partial_r w   \\
     &\approx \partial_j v^j + \Gamma^j_{jk}(0) v^k  + \Gamma^j_{j3}(0) w +\partial_r w \\
     & = {\rm div}_\Sigma v - l^j_j w +\partial_r w \\
     & =  {\rm div}_\Sigma v - H_\Sigma w +\partial_r w.
\end{aligned}
\end{equation}
The connection Laplacian $\Delta_\sM =\Phi^* (\Delta_{\bR^3})$ is given in local coordinates by
\begin{equation}
\label{Bochner-greek}
\Delta_{\sM} u
= g^{\alpha \beta}_\sM(\nabla^\sM_\alpha \nabla^\sM_\beta -\Gamma^\gamma_{\alpha\beta}\nabla^\sM_\gamma)u
= g_\sM ^{\alpha \beta}(\nabla^\sM_\alpha \nabla^\sM_\beta -\Gamma^\gamma_{\alpha\beta}\nabla^\sM_\gamma)u,
\end{equation}
where $\nabla^\sM$ is the Levi-Civita connection for $(\sM, g_\sM)$, and
$\nabla^\sM_\alpha = \nabla^\sM_{\boldsymbol{\tau}_\alpha}$.

\medskip\noindent
In the sequel, it turns out to be less cumbersome to use latin letters for the local expression of $\Delta_\sM u$
in \eqref{Bochner-greek}.
With this agreement, we obtain  in local coordinates
 \begin{equation}
 \label{Bochner-latin}
 \begin{aligned}
 \Delta_{\sM} u
               &= g^{jk}_\sM(r) \Big({\partial}_j{\partial}_k u^i
               + {\partial}_k (\Gamma^i_{j\ell}(r) u^\ell)
               + \Gamma^i_{mk}(r)({\partial}_j u^m + \Gamma^m_{j\ell} (r) u^\ell)  \\
               &\hspace{5cm}- \Gamma^m_{jk}(r)({\partial}_m u^i + \Gamma^i_{m\ell}(r) u^\ell)\Big) \boldsymbol{\tau}_i\,. \\
               \end{aligned}
 \end{equation}
 Note that now $i,j,k,\ell,m\in \{1,2,3\}$.
Additionally, for notational brevity, we set $g^{ij}(r)= g^{ij}_\sM (r)$ and $g_{ij}(r)= g^{\sM}_{ij}(r)$.

In the following, we write $\tilde \Delta_\sM $ in case only tangential derivatives $\partial_j$  are considered in \eqref{Bochner-latin}.
 This means  that we consider the case where $j,k\in \{1,2\}$ in \eqref{Bochner-latin}.
 Moreover, we write $(\Delta_\sM)_r u $ for the case where only normal derivatives are involved, that is, when $j,k=3$ in  $\eqref{Bochner-latin}.$

\medskip\noindent
 We start by considering the {tangential terms} of $\tilde \Delta_\sM u$ in \eqref{Bochner-latin}, corresponding to $i\in\{1,2\}$
 and $j,k\in\{1,2\}$.

\medskip\noindent
Noting that $\Gamma^m_{33}(r)=\Gamma^3_{m3}(r)=0$ for $m\in \{1,2,3\}$
and employing \eqref{u-basis},
we get
\begin{equation}
\label{tangential}
\begin{aligned}
&(\tilde \Delta_\sM u)_{\rm tan} =\\
&\qquad   g^{jk}(r)
\left[
\begin{aligned}
&({\partial}_j{\partial}_k v^i +{\partial}_k(\Gamma^i_{j\ell}(r) v^\ell) + \  \Gamma^i_{j3}(r){\partial}_k w
 +  ( {\partial}_k\Gamma^i_{j3}(r)) w  \\
& +\Gamma^i_{mk}(r)({\partial}_j v^m +\Gamma^m_{j\ell} (r) v^\ell)  +  \Gamma^i_{mk}(r)\Gamma^m_{j3}(r) w
		+ \Gamma^i_{3k}(r) ({\partial}_j w + \Gamma^3_{j\ell }(r) v^\ell  ) \\
& - \Gamma^m_{jk}(r)({\partial}_m v^i + \Gamma^i_{m \ell }(r) v^\ell)   -  \Gamma^m_{jk}(r) \Gamma^i_{m3}(r) w
               -  \Gamma^3_{jk}(r)({\partial}_3 v^i + \Gamma^i_{3\ell}(r) v^\ell )
\end{aligned}
\right] \boldsymbol{\tau}_i\,  .
\end{aligned}
\end{equation}
Setting $r=0$  and using the convention $   g^{jk} (0)=g^{jk},$ $\Gamma^i_{j \ell}(0)=  \Gamma^i_{j \ell}, {\rm etc.,}$  we have
$$
\Delta_\Sigma v
=g^{jk}[ {\partial}_j {\partial}_k v^i + {\partial}_k(\Gamma^i_{j\ell} v^\ell)
+\Gamma^i_{mk}({\partial}_j v^m +\Gamma^m_{j\ell}  v^\ell)
- \Gamma^m_{jk}({\partial}_m v^i + \Gamma^i_{m \ell } v^\ell) ]\tau_i
$$
for the connection Laplacian $\Delta_\Sigma$ on $\Sigma$,
and we obtain with  \eqref{Chris-1}  the following approximation of \eqref{tangential} for the terms involving~$v$:
\begin{equation}
\label{tangential-v-approx}
\begin{aligned}
(\tilde \Delta_\sM v)_{\rm tan}
&  \approx \Delta_\Sigma v
+ g^{jk}(r)
 [\Gamma^i_{3k}(r) \Gamma^3_{j\ell }(r) v^\ell  - \Gamma^3_{jk}(r)(\partial_3 v^i + \Gamma^i_{3\ell}(r) v^\ell )]\;\boldsymbol{\tau }_i  \ \Big|_{r=0} \\
& =  \Delta_\Sigma v  + g^{jk} [ - l_{j k}  \partial_3 v^i   + ( - l^i_k l_{j\ell}  + l^i_\ell l_{jk})v^\ell  ]\; \tau_i  \\
& =  \Delta_\Sigma v + [  - l^j_j \partial_3 v^i  +(l^j_j l^i_\ell  -  l^i_k l^k_\ell)v^\ell] \; \tau_i   \\
& =   \Delta_\Sigma v  - H_\Sigma \partial_r v  +  {\rm Ric}_\Sigma \,v .
\end{aligned}
\end{equation}
For the terms  of $(\tilde \Delta_\sM u)_{\rm tan}$ involving derivatives of $w$ we derive from \eqref{tangential} the following approximation
\begin{equation}
\label{tangential-w-tangential}
g^{jk}(r) [\Gamma^i_{j3}(r) {\partial}_k w + \Gamma^i_{3k}(r){\partial }_j w] \; \boldsymbol{\tau}_i \Big|_{r=0}
= - 2 g^{jk} l^i_j \partial_k w\,\tau_i = - 2L_\Sigma\,\gd_\Sigma w.
\end{equation}
For the remaining terms in \eqref{tangential} containing $w$, we observe that
\begin{equation*}
\begin{aligned}
&  [\partial_k\Gamma^i_{j3}(r) + \Gamma^m_{j3}(r)\Gamma^i_{mk}(r)- \Gamma^m_{jk}\Gamma^i_{m3}(r) ]\Big|_{r=0}
=- [\partial_k l^i_j + \Gamma^i_{mk} l^m_j - \Gamma^m_{jk} l^i_m ]= -(l^i_j)_{|\,k} .\\
\end{aligned}
\end{equation*}
We can then conclude that
\begin{equation}
\label{tangential-w}
\begin{aligned}
w\; g^{jk}(r)[\partial_k\Gamma^i_{j3}(r) + \Gamma^m_{j3}(r)\Gamma^i_{mk}(r)- \Gamma^m_{jk}\Gamma^i_{m3}(r) ]\boldsymbol{\tau}_i \Big|_{r=0}
&=  - w\,g^{jk} (l^i_j)_{|\,k} \; \tau_i \\
&= -w\,(g^{jk} l^i_j)_{|\,k} \; \tau_i \\
&= -w\,(l^{ik})_{|\,k}  \; \tau_i \\
&= -w\, {\rm div} _\Sigma L^\sharp_\Sigma.
\end{aligned}
\end{equation}
There are additional tangential terms with $i \in \{1,2\}$ in \eqref{Bochner-latin},  resulting from normal derivatives for $j=k=3$.
Noting that $g^{m3}(r)= \delta^{m}_3$, $\Gamma^m_{33}(r)=\Gamma^3_{m3}(r)=0$ for $m\in \{1,2,3\}$
we obtain with \eqref{u-basis} the following expression
\begin{equation}
\label{normal-u}
\begin{aligned}
((\Delta_\sM)_r u)_{\rm tan}
&=[\partial^2_3  v^i  +  (\partial_3 \Gamma^i_{3\ell}(r))v^\ell  + \Gamma^i_{3\ell}(r)\partial_3 v^\ell
+\Gamma^i_{m3}(r)(\partial_3 v^m  + \Gamma^m_{3\ell}(r)v^\ell)  ]\; \boldsymbol{\tau}_i \\
&\approx
\left[\partial^2_3 v^i   +  (\partial_3 \Gamma^i_{3\ell}(r))v^\ell  + \Gamma^i_{3\ell}(r)\partial_3 v^\ell
+\Gamma^i_{m3}(r)(\partial_3 v^m  + \Gamma^m_{3\ell}(r)v^\ell)  \right]\; \boldsymbol{\tau}_i \Big|_{r=0}  \\
& = [\partial^2_r v^i  -  l^i_m l^m_\ell  v^\ell  - l^i_\ell \partial_r v^\ell -l^i_\ell \partial_r v^\ell + l^i_m l^m_\ell v^\ell] \;\tau_i \\
&=  [\partial^2_r v^i  - 2 l^i_\ell \partial_r v^\ell  ]\tau_i = \partial^2_r v  - 2 L_\Sigma \partial_r v.
\end{aligned}
\end{equation}
Here we used   the relation
\begin{equation*}
\begin{aligned}
\partial_r\big|_{r=0}\; \Gamma^i_{3\ell}(r)
& = \partial_r\big|_{r=0}\; [ g^{in}(r)(-l_{\ell n} + r l_{n k} l^k_\ell)] \\
& = -\left( \partial_r\big|_{r=0}\; g^{i n}(r) \right) l_{\ell n}  +  g^{i n}(0) l_{n k} l^k_\ell  \\
& = g^{ik}(0)\left(\partial_r\Big|_{r=0} g_{km}(r)\right) g^{m n}(0) l_{\ell n} + l^i_k l^k_\ell  \\
& =  g^{ik}(0)\left(-2 l_{km}\right) g^{m n}(0) l_{\ell n} + l^i_k l^k_\ell
 = -2 l^i_m l^m_\ell + l^i_k l^k_\ell  = -l^i_m l^m_\ell .
\end{aligned}
\end{equation*}
Combining \eqref{tangential}, \eqref{tangential-v-approx}, \eqref{tangential-w-tangential}, \eqref{tangential-w}, and \eqref{normal-u} yields
\begin{equation}
\begin{split}
\label{u-tangential-combined}
(\Delta_\sM u)_{\rm tan}
& \approx (\Delta_\Sigma  + \partial^2_r + {\rm Ric}_\Sigma )v   - ( 2L_\Sigma  + H_\Sigma)\partial_r v
-2 L_\Sigma \, {\rm grad}_\Sigma w  - w\, {\rm div}_\Sigma L^\sharp_\Sigma\\
&= (\Delta_\sN  + {\rm Ric}_\Sigma )v   - ( 2L_\Sigma  + H_\Sigma)\partial_r  v- 2 L_\Sigma \, {\rm grad}_\Sigma w  - w\, {\rm div}_\Sigma L^\sharp_\Sigma,
\end{split}
\end{equation}
where $\Delta_\sN$ is the connection Laplacian on $(\sN,g_\sN)$.

\bigskip\noindent
We will now introduce  some further approximations. In case $\Sigma$ is $C^3$-close to a sphere of  radius $a$,
it can be shown that
\begin{equation}
\label{approx-geometry}
\|  L_\Sigma \|_{C(\Sigma)}\approx \frac{1}{a},\quad  \| H_\Sigma \|_{C(\Sigma)} \approx \frac{1}{a}, \quad  \| {\rm div}_\Sigma L^\sharp_\Sigma \|_{C(\Sigma)} \approx \frac{1}{a}.
\end{equation}
\\
To be more precise,
let $S_a\subset \bR^3$ be the sphere of radius $a$. Given $\rho\in C(S_a)$, we consider hypersurfaces $\Sigma$ given by
$$
\Sigma:=\Sigma_\rho:=\{\,p+\rho(p)\nu_{S_a}(p):p\in S_a\,\},
$$
where $\nu_{S_a}$ is the outward pointing unit normal on $S_a$.
We say that $\Sigma$ is $C^3$-close to $S_a$ if $\rho\in C^3(S_a)$ and $\| \rho \|_{C(\Sigma)}\le \varepsilon$, for $\varepsilon>0$ to be determined.
Choosing $\varepsilon \approx \frac{1}{a}$, it can be shown that \eqref{approx-geometry} holds. More details will be given somewhere else.

\medskip
For the boundary conditions, we assume that
 $$w(\cdot, r)=0 \text{ for } r\in \{-h,0\}, \quad v(\cdot, -h)=0, \quad \partial_r v(\cdot, 0)=0.$$
 As $h$ is small,   the quantities
$w$ and $\partial_r v$
are much smaller in magnitute than the tangential velocity $v$.
Combining with \eqref{approx-geometry}, we argue that the terms
\begin{equation}
\label{terms-neglected}
2 w L_\Sigma v,\quad w H_\Sigma, \quad (2L_\Sigma + H_\Sigma)\partial_r v,\quad  w\, {\rm div}_\Sigma L^\sharp_\Sigma \quad\text{can be neglected}
\end{equation}
in \eqref{transport-M}, \eqref{div-M}, and \eqref{u-tangential-combined}.
As a consequence of these approximations, the incompressible condition $\dv_{\sM}=0 $ now reads
$\partial_r w(\cdot, r) + \dv_\Sigma v(\cdot, r)=0$. Together with the condition $w(\cdot, 0)=0$, we obtain
\begin{equation}
\label{w-div}
w(\cdot, r) =\int_r^0 \dv_\Sigma v(\cdot, \xi)\, d\xi.
\end{equation}
This shows that the term $\gd_\Sigma w$ is, in fact, of second order in $v$.
However, with~\eqref{lem: N-invertible} one verifies that
$$
\| \gd_\Sigma w\|_{L_q(\sN)} \le C h \| \Delta_\sN v\|_{L_q(\sN)}.
$$
Hence with \eqref{approx-geometry}, the term
\begin{equation}
\label{grad-neglected}
2 L_\Sigma \,\gd_\Sigma w\quad\text{can be neglected}
\end{equation}
in \eqref{u-tangential-combined},  as compared to $\Delta_\sN v$.
We also observe on the go that by~\eqref{w-div} and $w(\cdot, -h)=0$,
\begin{equation}
\label{div-bar-v}
\dv_\Sigma\, \bar v=0.
\end{equation}
We now assume that the external force $f$ in~\eqref {N-S eq} is given by 
$$f=-\varrho \gamma\, (\nu_\Sigma\circ \Pi_\Sigma),$$ 
where $\gamma $ is the gravitational acceleration and $\Pi_\Sigma$ is the projection of $S$ onto $\Sigma$,
see~\eqref{P-d}.
As is common in the modeling of geophysical fluid dynamics,  we then replace the vertical momentum equation  by the \emph{hydrostatic approximation}:
\begin{equation}
\label{hydrostatic equation}
 \partial_r \pi = -\rho g.
\end{equation}
Let $\pi_s(\cdot)=\pi(\cdot,0)$ be the pressure on the ocean surface. Then \eqref{hydrostatic equation} implies
$$
\pi(\cdot,r)= \pi_s(\cdot) -r \rho g.
$$
Taking $\gd_\Sigma$ on both sides yields
\begin{equation}
\label{grad-pis}
\gd_\Sigma \pi(\cdot,r)=  \gd_\Sigma \pi_s(\cdot).
\end{equation}
Upon setting $\varrho=\mu_s=1$,
and employing \eqref{transport-M}, \eqref{u-tangential-combined}, \eqref{terms-neglected}, \eqref{grad-neglected}, \eqref{div-bar-v} and \eqref{grad-pis}
we obtain the primitive equations \eqref{PE}.

Let us note that high-resolution models for geophysical flows, including, for example, complex terrain and thermal updrafts, require non-hydrostatic dynamics, in which the hydrostatic approximation is replaced by the vertical momentum equation in Euclidean coordinates of the form
$$
\partial_t w = - \frac{1}{\varrho} p_r - g +F_r ,
$$
where $F_r$ describes friction or turbulent forces.

\medskip\noindent
\begin{remark}
\label{comparison-LTW}
We compare our derivation with the one presented in \cite{LTW92} in the
special case  where $\Sigma =S_a$.

\smallskip
While we consider from the very beginning the  Navier-Stokes equations for an incompressible fluid,
the authors in \cite{LTW92}
start from the compressible Navier–Stokes equations including a Coriolis force term and temperature.
Exploiting the small aspect ratio of the atmosphere relative to the Earth's radius, they invoke the hydrostatic approximation, replacing the vertical momentum equation by the
relation
$$\partial_r p = -\varrho g.$$
The authors introduce pressure as a vertical coordinate (the so-called pressure-coordinate formulation). Integrating the
hydrostatic relation then allows the density and the vertical velocity to be eliminated in favor of the horizontal velocity and temperature.
The continuity equation is transformed into a divergence constraint involving the horizontal velocity and the vertical velocity in pressure coordinates.
This leads to a reformulation of the atmospheric equations as a system of evolution equations for the horizontal velocity and a thermodynamic variable.

Let us remark, however, that the reformulation of the primitive equations in pressure coordinates is rigorous only under the (non-physical) assumption that the pressure is constant on the boundary.

 \medskip\noindent
 We now compare our system \eqref{PE} with the equations derived in \cite{LTW92} for the  case \(\Sigma=S_a\).

 \medskip\noindent
 Using the same spherical coordinates $(\theta,\varphi)$ as in \cite{LTW92}, where $\theta\in(0,\pi)$ denotes the colatitude and $\varphi\in(0,2\pi)$ the longitude, and writing the velocity field in the form
$$
u = v_\theta e_\theta + v_\varphi e_\varphi + v_r e_r,
\qquad
e_\theta = \frac{1}{a}\,\frac{\partial}{\partial\theta},
\qquad
e_\varphi = \frac{1}{a\sin\theta}\,\frac{\partial}{\partial\varphi},
\qquad
e_r = \frac{\partial}{\partial r},
$$
it follows by the same arguments as above that the tangential component of the connection Laplacian satisfies
\begin{equation*}
\begin{aligned}
   (\Delta u)_{\rm tan}=
 &\Big[  \Delta^B_{S_a} v_\theta +  \frac{1}{r^2} \frac{\partial}{\partial r} \big( r^2 \frac{\partial v_\theta }{\partial r}\big)
 - \frac{2\cos\theta}{r^2\sin^2\theta}\frac{ \partial v_\varphi}{\partial \varphi} -\frac{v_\theta}{r^2 \sin^2\theta} + \frac{2}{r^2} \frac{\partial v_r}{\partial\theta} \Big] e_\theta
  \\
+ &\Big[  \Delta^B_{S_a} v_\theta +  \frac{1}{r^2} \frac{\partial}{\partial r} \big( r^2 \frac{\partial v_\theta }{\partial r}\big)
 + \frac{2\cos\theta}{r^2\sin^2\theta}\frac{ \partial v_\theta}{\partial \varphi} -\frac{v_\varphi}{r^2 \sin^2\theta} + \frac{2}{r^2\sin\theta} \frac{\partial v_r}{\partial\varphi} \Big]  e_\varphi.
 \end{aligned}
\end{equation*}
This formula is well-known in the literature. Here,
$\Delta^B_{S_a}$ denotes the Laplace-Beltrami operator acting on scalar functions.
Our approximation of \((\Delta u)_{\rm tan}\) corresponds here to setting \(r=a\), as is also done in \cite{LTW92}; see p.~244. This leads to
\begin{equation}
\label{3D-Bochner-approx}
\begin{aligned}
   (\Delta u)_{\rm tan}\approx
 &\Big[  \Delta^B_{S_a} v_\theta +  \frac{1}{r^2} \frac{\partial}{\partial r} \big( r^2 \frac{\partial v_\theta }{\partial r}\big)\Big|_{r=a}
 - \frac{2\cos\theta}{a^2\sin^2\theta}\frac{ \partial v_\varphi}{\partial \varphi} -\frac{v_\theta}{a^2 \sin^2\theta} + \frac{2}{a^2} \frac{\partial v_r}{\partial\theta} \Big] e_\theta
  \\
+ &\Big[  \Delta^B_{S_a} v_\theta +  \frac{1}{r^2} \frac{\partial}{\partial r} \big( r^2 \frac{\partial v_\theta }{\partial r}\big)\Big|_{r=a}
 + \frac{2\cos\theta}{a^2\sin^2\theta}\frac{ \partial v_\theta}{\partial \varphi} -\frac{v_\varphi}{a^2 \sin^2\theta} + \frac{2}{a^2\sin\theta} \frac{\partial v_r}{\partial\varphi} \Big]  e_\varphi.
 \end{aligned}
\end{equation}
A comparison with equation~(1.35) in \cite{LTW92} shows that the authors omit the term
$$
\frac{2}{a}
\Big(\frac{1}{a}\frac{\partial v_r}{\partial\theta}\,e_\theta
+\frac{1}{a\sin\theta}\frac{\partial v_r}{\partial\varphi}\,e_\varphi\Big)= \frac{2}{a}\,\gd_{S_a} v_r= -2L_{S_a}\gd_{S_a}v_r
$$
which appears in \eqref{3D-Bochner-approx}. This is precisely the same approximation that we made in \eqref{grad-neglected}.
Owing to the use of pressure coordinates in the vertical direction, the term
\[
\frac{1}{r^2}\frac{\partial}{\partial r}
\Bigl(r^2\frac{\partial v}{\partial r}\Bigr)\Big|_{r=a}
\]
 takes a slightly different form in \cite{LTW92}; see formula~(1.33) there.
In our formulation, by neglecting the term  $ (2L_\Sigma + H_\Sigma)\partial_r v$ in \eqref{u-tangential-combined},  the expression
\begin{equation*}
\begin{aligned}
& \Big[  \Delta^B_{S_a} v_\theta +  \frac{1}{r^2} \frac{\partial}{\partial r} \big( r^2 \frac{\partial v_\theta }{\partial r}\big)\Big|_{r=a}
 - \frac{2\cos\theta}{a^2\sin^2\theta}\frac{ \partial v_\varphi}{\partial \varphi} -\frac{v_\theta}{a^2 \sin^2\theta} \Big] e_\theta \\
+&\Big[  \Delta^B_{S_a} v_\theta +  \frac{1}{r^2} \frac{\partial}{\partial r} \big( r^2 \frac{\partial v_\theta }{\partial r}\big)\Big|_{r=a}
 + \frac{2\cos\theta}{r^2\sin^2\theta}\frac{ \partial v_\theta}{\partial \varphi} -\frac{v_\varphi}{r^2 \sin^2\theta}  \Big]  e_\varphi,
 \end{aligned}
\end{equation*}
takes the form
$$
(\Delta_{\sN}+\Ric)\,v,
$$
where the operator $\Delta_{\sN}+\Ric$ admits a natural geometric interpretation independent of the chosen coordinate system,
with $\sN = S_a \times (-h,0)$.\\
As noted on page 244  in  \cite{LTW92}, the authors omit the term
$$
v_r\,\frac{1}{a}\bigl(v_\theta e_\theta + v_\varphi e_\varphi\bigr)
=
-v_r L_{S_a}\bigl(v_\theta e_\theta + v_\varphi e_\varphi\bigr)
$$
that is originally contained in formula (1.5).
This corresponds to the term $-2wL_\Sigma v$  in \eqref{transport-M}, which we have likewise omitted.
We note in passing that our calculations suggest that the term
$
\frac{v_r}{a}\bigl(v_\theta e_\theta + v_\varphi e_\varphi\bigr)
$
appearing in \cite[(1.5)]{LTW92} should carry an additional factor of 2.
Finally, the term $w \operatorname{div}_\Sigma L_\Sigma^\sharp$ that was neglected in our approximation vanishes identically when $\Sigma=S_a$.

We emphasize that temperature effects and the Coriolis force have not been taken into account in the present manuscript.
 \end{remark}

\bigskip\noindent
{\bf Conflict of interest:} The authors assert that there is no conflict of interest to declare.

\medskip\noindent
{\bf Data availability statement:}
Data sharing is not applicable as no datasets were generated or analyzed for the manuscript.

\bigskip


\begin{thebibliography}{1}


\bibitem{Ada03} R. A. Adams, J.J.F. Fournier. \emph{Sobolev Spaces}. 2nd ed. Academic Press, 2003.

\bibitem{Ama95}
H. Amann,
\emph{Linear and Quasilinear Parabolic Problems. Vol. I. Abstract Linear Theory.}
Monogr. Math. \textbf{89}, Birkh\"auser Boston, Inc., Boston, MA, 1995.

\bibitem{Ama13}
H. Amann,
\emph{Function spaces on singular manifolds.}
Math. Nachr. \textbf{286}, no. 5-6, 436-475 (2013).

\bibitem{Ama16}
H. Amann,
\emph{Parabolic equations on uniformly regular Riemannian manifolds and degenerate initial boundary value problems.}
Recent developments of mathematical fluid mechanics, 43-77.
Adv. Math. Fluid Mech.
Birkh\"auser/Springer, Basel, 2016.

%

\bibitem{AmannBook19}
H. Amann,
\emph{Linear and Quasilinear Parabolic Problems. Vol. II. Function Spaces.}
Monogr. Math. \textbf{106},  Birkh\"auser/Springer, Cham, 2019.

\bibitem{AubinBook}
T. Aubin,
\emph{Nonlinear Analysis on Manifolds. Monge-Amp\'ere Equations.}
Grundlehren Math. Wiss. \textbf{252} [Fundamental Principles of Mathematical Sciences]
Springer-Verlag, New York, 1982.


\bibitem{CaTi07}
C. Cao,  E.~S. Titi,
\emph{Global well-posedness of the three-dimensional viscous primitive equations of large scale ocean and atmosphere dynamics.}
Ann. of Math. (2) \textbf{166}, no. 1, 245--267 (2007).


\bibitem{DHP03}
R. Denk, M. Hieber and J. Pr\"uss,
\emph{$\mathscr{R}$-boundedness, Fourier multipliers and problems of elliptic and parabolic type.}
Mem. Amer. Math. Soc. \textbf{166}, no. 788, 2003.

\bibitem{Dru09}
A. Drutsa,
\emph{Existence in the large of a solution to primitive equations in a domain with unveven bottom.}
Russian J. Num. Anal. Math. Model. {\bf 24}, no. 6, p515 (2009).

\bibitem{Dru11}
A. Drutsa,
\emph{Existence in the large of a solution of equations of large-scale ocean dynamics on a manifold.}
Sbornik Math. {\bf 202}, no. 10, 1463 (2011).

\bibitem{DSS24}
H. Du, Y. Shao and G. Simonett,
\emph{On a thermodynamically consistent model for magnetoviscoelastic fluids in 3D.}
J. Evol. Equ. {\bf 24}, no. 1, Paper No. 9, 51 pp (2024).

\bibitem{FGHHKW20}
K. Furukawa, Y. Giga, M. Gries, M. Hieber, A. Hussein and T. Kashiwabara.
\emph{Rigorous justification of the hydrostatic approximation for the primitive equations by scaled Navier-Stokes equations.}
Nonlinearity. {\bf 33}, no. 12,  (2020).

\bibitem{FGHHKW25}
K. Furukawa, Y. Giga, M. Gries, M. Hieber, A. Hussein and T. Kashiwabara.
\emph{The three limits of the hydrostatic  approximation.}
J. London Math. Soc. {\bf 111}, no. 2, e70130, (2025).



\bibitem{GHK17}
G.P. Galdi, M. Hieber and T. Kashiwabara,
\emph{Strong time-periodic solutions to the 3D primitive equations subject to arbitrary large forces.}
Nonlinearity {\bf 30}, no. 10, 3979-3992 (2017).

\bibitem{GGHHK17}
Y. Giga, M. Gries, M. Hieber, A. Hussein and T. Kashiwabara.
\emph{Bounded $H^\infty$-calculus for the hydrostatic Stokes operator on $L_p$-spaces and applications.}
Proc. Amer. Math. Soc. \textbf{145} (9), 3865-3876 (2017).

\bibitem{GGHHK20}
Y. Giga, M. Gries, M. Hieber, A. Hussein and T. Kashiwabara.
\emph{Analyticity of solutions to the primitive equations.}
Math. Nachr. {\bf 293}, no. 2, 284-304 (2020).

\bibitem{GGHHK20b}
Y. Giga, M. Gries, M. Hieber, A. Hussein and T. Kashiwabara.
\emph{The hydrostatic Stokes semigroup and well-posedness of the primitive equations on spaces of bounded functions.}
J. Func. Anal. {\bf 279}, no. 3, 108561 (2020).

\bibitem{GGHHK21}
Y. Giga, M. Gries, M. Hieber, A. Hussein and T. Kashiwabara.
\emph{The primitive equations in the scaling invariant space $L^\infty(L^1)$.}
J. Evol. Equ. {\bf 21}, no. 4, 4145-4169 (2021).

\bibitem{HHK16}
M. Hieber, A. Hussein and T. Kashiwabara,
\emph{Global strong $L^p$ well-posedness of the 3D primitive equations with heat and salinity diffusion.}
J. Differential Equations {\bf 261}, no. 12, 6950-6981 (2016).

\bibitem{HiKa16}
M. Hieber, T. Kashiwabara,
\emph{Global strong well-posedness of the three dimensional primitive equations in $L^p$-spaces.}
Arch. Ration. Mech. Anal. {\bf 221}, no. 3, 1077-1115 (2016).

\bibitem{HvNVW17}
T. Hyt\"{o}nen, J. v. Neerven, M. Veraar and  L. Weis,
\emph{Analysis in Banach Spaces. Vol. II. Probabilistic Methods and Operator Theory.}
 Series of Modern Surveys in Mathematics [Results in
              Mathematics and Related Areas. 3rd Series. A Series of Modern
              Surveys in Mathematics] \textbf{67},  Springer, Cham, 2017.

\bibitem{Kob07}
G.~M. Kobelkov,
\emph{Existence of a solution ``in the large'' for ocean dynamics equations.}
J. Math. Fluid Mech. \textbf{9}, 588--610 (2007).

\bibitem{Kor26}
P. Korn,
\emph{Global well-posedness of the viscous primitive equations with a free surface.}
Personal communication (2026).

\bibitem{KuZI07}
I. Kukavica, M. Ziane,
\emph{On the regularity of the primitive equations of the ocean.}
Nonlinearity \textbf{20}, 2739--2753 (2007).

\bibitem{LiTi18}
J. Li, E.~S. Titi,
\emph{Recent advances concerning certain class of geophysical flows.}
In: Handbook of Math. Anal. Mech. Viscous Fluids (eds.: Y. Giga, A. Novotny) Vol. 1, 933-971, Springer (2018).

\bibitem{LiTi19}
J. Li, E.~S. Titi,
\emph{The primitive equations as the small aspect ratio limit of the Navier--Stokes equations: rigorous justification of the hydrostatic approximation.}
J. Math. Pures Appl. \textbf{124}, 30--58  (2019).

\bibitem{LTW92}
J.-L. Lions, R. Temam and S. Wang,
\emph{New formulations of the primitive equations of atmosphere and applications.}
Nonlinearity \textbf{5}, no. 2, 237--288 (1992).

\bibitem{LTW92b}
J.-L. Lions, R. Temam and  S. Wang,
\emph{On the equations of the large-scale ocean.}
Nonlinearity {\bf 5}, no. 5, 1007-1053 (1992).

\bibitem{LTW95}
J.-L. Lions, R. Temam and S. Wang,
\emph{Mathematical theory for the coupled atmosphere--ocean models (CAO~III)},
J. Math. Pures Appl. \textbf{74}, 05--163 (1995).



\bibitem{MeSc12}
M. Meyries, R. Schnaubelt,
\emph{Interpolation, embeddings and traces of anisotropic ractional Sobolev spaces with temporal weights}.
 J. Funct. Anal. {\bf 262}, 1200-1229 (2012).


\bibitem{MeVe12}
M. Meyries, M. Veraar,
\emph{Sharp embedding results for spaces of smooth functions with power weights}.
Studia Math. {\bf 208}, 257-293 (2012).

\bibitem{Mul06}
P. M\"uller,
\emph{The Equations of Oceanic Motions}.
Cambridge Univ. Press, 291pp, (2006).

\bibitem{Ped87}
J. Pedlovsky,
\emph{Geophysical Fluid Dynamics.}
Second edition. Springer, New York, 1987.

\bibitem{Pet06}
P. Petersen,
\emph{Riemannian Geometry.}
Second edition. Graduate Texts in Mathematics \textbf{171}, Springer, New York, 2006.

\bibitem{PrSi16}
J. Pr\"uss, G. Simonett,
\emph{Moving Interfaces and Quasilinear Parabolic Evolution Equations.}
Monographs in Mathematics. Birkh\"auser Verlag. 2016.

\bibitem{PrWi17}
J. Pr\"uss,  M. Wilke,
\emph{Addendum to the paper ``On quasilinear parabolic evolution equations in weighted Lp-spaces II"}.
J. Evol. Equ. {\bf 17}, no. 4, 1381-1388 (2017).

\bibitem{PSW18}
J. Pr\"uss, G. Simonett and  M. Wilke,
\emph{Critical spaces for quasilinear parabolic evolution equations and applications}.
J. Differential Equations {\bf 264}, no. 3, 2028-2074 (2018).

\bibitem{PSW21}
J. Pr\"uss, G. Simonett and  M. Wilke,
\emph{On the Navier-Stokes equations on surfaces}.
J. Evol. Equ. {\bf 21}, no. 3, 3153-3179 (2021).


\bibitem{SaTu20}
M. Samavaki, J. Tuomela,
\emph{Navier-Stokes equations on Riemannian manifolds}.
J. Geom. Phys. {\bf 148} 103543, 15 pp. (2020).


\bibitem{SSW25}
Y. Shao, G. Simonett and M. Wilke,
\emph{The Navier-Stokes equations on manifolds with boundary.}
J. Differential Equations {\bf 416}, 1602-1659 (2025).


\bibitem{SiWi22}
G. Simonett,  M. Wilke,
\emph{$H^\infty$-calculus for the surface Stokes operator and applications}.
 J. Math. Fluid Mech. {\bf 24}, no. 4,  Paper No. 109, 23 pp.  (2022).

%
\bibitem{Tri78}
H. Triebel,
\emph{Interpolation Theory, Function Spaces, Differential Operators.}
North-Holland Publishing Co., Amsterdam-New York, 1978.



\end{thebibliography}
\end{document}